\documentclass[11pt, leqno, twoside]{article}

\usepackage[english]{babel}

\usepackage{amsmath}
\usepackage{amssymb}
\usepackage{titletoc}
\usepackage{mathrsfs}
\usepackage{amsthm}
\usepackage{indentfirst}
\usepackage{color}
\usepackage{txfonts}
\usepackage{enumerate}

\usepackage[colorlinks=true,
linkcolor=blue,
citecolor=red,
urlcolor=magenta,
]{hyperref}

\usepackage{anysize}

\allowdisplaybreaks

\newtheorem{theorem}{Theorem}[section]
\newtheorem{lemma}[theorem]{Lemma}
\newtheorem{corollary}[theorem]{Corollary}
\newtheorem{proposition}[theorem]{Proposition}
\theoremstyle{definition}
\newtheorem{example}[theorem]{Example}
\newtheorem{remark}[theorem]{Remark}
\newtheorem{definition}[theorem]{Definition}

\numberwithin{equation}{section}

\begin{document}

\title{\bf\Large Local Matrix Muckenhoupt Weights and Quantitative Weighted
Inequalities Achieving Global Best Known
Exponents\footnotetext{\hspace{-0.25cm}2020 {\it Mathematics Subject Classification}.
Primary 47A30; Secondary 47A56, 46E30, 42B25, 42B20, 47B06.\endgraf
{\it Key words and phrases:}
local matrix weight,
local fractional maximal operator,
local fractional integral operator,
local Haar square function,
Calder\'{o}n--Zygmund operator with exponential decay.}}
\date{}
\author{Tuomas Hyt\"onen,
Dachun Yang\footnote{Corresponding author,
E-mail: \texttt{dcyang@bnu.edu.cn}/
{\color{red}\today}/Final Version.},\ \ Wen Yuan
and Mingdong Zhang}

\maketitle

\vspace{-0.8cm}

\begin{center}
\begin{minipage}{13cm}
{\small {\bf Abstract}\quad
In this article, we give various real-variable properties of
local Muckenhoupt matrix weights $W$
and establish the quantitative boundedness of several operators
on local matrix-weighted Lebesgue spaces $L^p(W)$, including
local (fractional) maximal operators,
local fractional integral operators, local Haar square functions,
and Calder\'{o}n--Zygmund operators with exponential decay.
For the local maximal operators,
we obtain the sharp quantitative bounds when $p\in(1,2]$,
while, for local fractional integral operators,
we obtain the quantitative bounds matching the global best known
exponents, whose scalar case is known to be sharp.
The key used strategies include giving a new extension
property (which can clarify their relationships with
global ones) and an optimal scale lifting property
(which can balance the locality of weights and operators under
consideration) of local matrix weights.
As an application, we establish
the quantitative boundedness on $L^p(W)$ of the Riesz transform
associated with Schr\"{o}dinger operators $-\Delta+m^2I$
with $m\in(0,\infty)$ being large enough,
whose quantitative bound when $p=2$ is precisely
$[W]_{\mathscr{A}^{\operatorname{loc}}_{2}(r)}^{\frac 32}$.
}
\end{minipage}
\end{center}

\vspace{0.2cm}

\tableofcontents

\vspace{0.2cm}

\section{Introduction}

Within harmonic analysis, the weighted theory is of
fundamental importance, and its impact extends
significantly to partial differential equations
and other branches of analysis.
Let $w$ be a non-negative locally integrable function on
$\mathbb{R}^n$. For any $p\in(1,\infty)$, the
\emph{scalar weighted Lebesgue space} is defined to be the
set of all measurable functions
$f$ on $\mathbb{R}^n$ such that
$$\|f\|_{L^p(w)}:=\left[\int_{\mathbb{R}^n}|f(x)|^p
w(x)\,dx\right]^{\frac{1}{p}}<\infty.$$
[Let $L^p:=L^p(w)$ with $w\equiv1$.]
A fundamental question in this theory
is to characterize the condition on $w$ such that
the Hardy--Littlewood maximal operator $M$ is bounded on $L^p(w)$.
In \cite{muc72},  Muckenhoupt answered this question by
proving that, for any $p\in(1,\infty)$, $M$ is bounded on
$L^p(w)$ if and only if $w$ satisfies
\begin{align}\label{eq-w-Ap}
	[w]_{A_{p}}:=\sup_{Q\subset\mathbb{R}^n}
	\frac{1}{|Q|}\int_Q w(x)\,dx\left\{\frac{1}{|Q|}
	\int_Q \left[w(x)\right]^{-\frac{p^{\prime}}{p}}
	\,dx\right\}^{\frac{p}{p^{\prime}}}<\infty,
\end{align}
where the supremum is taken over all cubes $Q\subset\mathbb{R}^n$
and $|Q|$ denotes the Lebesgue measure of $Q$.
The set of all non-negative locally integrable
functions on $\mathbb{R}^n$ satisfying \eqref{eq-w-Ap}
is nowadays referred to as the \emph{scalar Muckenhoupt weight class $A_p$}.
Subsequently, Hunt et al. \cite{hmw73} proved an
analogous result on $\mathbb{R}$ by replacing the
Hardy--Littlewood maximal operator with the
Hilbert transform.

Inspired by these results on classes $A_p$,
it is also natural to consider the weighted boundedness of
fractional operators.
For any $\alpha\in(0, n)$, the \emph{fractional integral operator}
$I_\alpha$ is defined by setting, for any suitable measurable function
$f$ on $\mathbb{R}^n$ and for any $x\in\mathbb{R}^n$,
\begin{align}\label{eq-fract}
I_\alpha f(x):=\int_{\mathbb{R}^n}
\frac{f(y)}{|x-y|^{n-\alpha}} \,dy.
\end{align}
Similarly, for any $\alpha\in[0, n)$, the \emph{fractional maximal operator}
$M_\alpha$ is defined by setting, for any measurable function
$f$ on $\mathbb{R}^n$ and for any $x\in\mathbb{R}^n$,
\begin{align*}
M_\alpha f(x):=\sup_{x\in Q\subset\mathbb{R}^n}
\frac{1}{|Q|^{1-\frac{\alpha}{n}}}
\int_Q|f(y)|\,dy,
\end{align*}
where the supremum is taken over all cubes $Q\subset\mathbb{R}^n$ that contain $x$.
Note that the operator $M_0$ coincides exactly with the
\emph{Hardy--Littlewood maximal operator $M$}.
Let $\alpha\in(0,n)$, $p\in(1,\frac{n}{\alpha})$,
and $q:=\frac{np}{n-\alpha p}$. To characterize the weighted boundedness of
fractional integrals $I_\alpha$, Muckenhoupt and
Wheeden \cite{mw74} introduced the \emph{weight class $A_{p,q}$}
and proved that $I_\alpha$ and $M_\alpha$ are
bounded from $L^p(w^\frac{p}{q})$
to $L^q(w)$ if and only if the non-negative weight $w$ satisfies
\begin{align*}
[w]_{A_{p,q}}:=
\sup_{Q\subset\mathbb{R}^n}
\fint_Q w(x)\,dx\left\{\fint_Q \left[w(x)\right]^{-\frac{p^{\prime}}{q}}
\,dx\right\}^{\frac{q}{p^{\prime}}}<\infty,
\end{align*}
where the supremum is taken over all cubes $Q\subset\mathbb{R}^n$.

Later, there has been significant interest in tracking the
precise dependence of weighted inequalities on the weight constants.
Initially, Buckley \cite[Theorem 2.5]{buc93} proved that, for any
$p\in(1,\infty)$ and $w\in A_p$,
\begin{align*}
\left\|M\right\|_{L^p(w)\to L^p(w)}\lesssim[w]^{\frac{1}{p-1}}_{A_p},
\end{align*}
where the implicit positive constant is independent of $w$.
Later, Lacey et al. \cite{lmpt10} proved, for any
$\alpha\in(0,n)$, $p\in(1,\frac{n}{\alpha})$,
$q:=\frac{np}{n-\alpha p}$, and $w\in A_{p,q}$,
\begin{align}
\left\|M_\alpha\right\|_{L^p(w^\frac{p}{q})\to L^q(w)}
&\lesssim [w]^{(1-\frac{\alpha}{n})\frac{p'}{q}}_{A_{p,q}} \label{eq-quantativebound-fracmax}\\
\intertext{and}
\left\|I_\alpha\right\|_{L^p(w^\frac{p}{q})\to L^q(w)} &\lesssim [w]^{(1-\frac{\alpha}{n})\max(1,\frac{p'}{q})}_{A_{p,q}},
\label{eq-quantativebound-fracint}
\end{align}
where both implicit positive constants are independent of
$w$ and both exponents are sharp.  Moreover, the study of sharp
quantitative weighted inequalities for Calder\'on--Zygmund operators,
particularly the well-known $A_2$ conjecture,
is also an extensive and fascinating branch of harmonic analysis.
We refer the reader to \cite{cru23} for an excellent introduction
to this topic.

In the study of the prediction theory of
multivariate stochastic processes,
the matrix-weighted Lebesgue space $L^2(W)$ emerged
in the work of Wiener and Masani
\cite[Section 4]{wm58}. As a natural higher-dimensional
generalization of $L^p(w)$, the matrix-weighted
Lebesgue spaces $L^p(W)$ have attracted
much attention. Let $W$ be an $m\times m$ matrix-valued function
on $\mathbb{R}^n$ such that
$W$ is positive definite almost everywhere and all the
entries of $W$ are locally integrable functions on $\mathbb{R}^n$,
where $m\in\mathbb{N}$. For any $p\in(1,\infty)$,
the \emph{matrix-weighted Lebesgue space} $L^p(W)$ is
defined to be the set of all measurable vector-valued
functions $\vec{f}: \mathbb{R}^n\to \mathbb{C}^m$ such that
\begin{align*}
\left\|\vec{f}\right\|_{L^{p}(W)}:=\left[\int_{\mathbb{R}^n}
\left|W^{\frac{1}{p}}(x)\vec{f}(x)\right|^{p}\,dx
\right]^{\frac{1}{p}}<\infty.
\end{align*}
[Let $L^p:=L^p(W)$ with $W\equiv I_m$.]
To investigate multivariate random stationary
processes and the invertibility of Toeplitz operators,
Treil and Volberg \cite{tv97} found the
matrix $\mathscr{A}_2$ condition and showed that
the boundedness of the Hilbert transform on $L^2(W)$ is equivalent to
$W\in\mathscr{A}_2$. Later, Nazarov and Treil \cite{nt96}
and Volberg \cite{v97} independently determined the matrix
$\mathscr{A}_p$ condition for any $p\in(1, \infty)$
and proved that the boundedness of the Hilbert transform on
$L^{p}(W)$ is equivalent to $W\in\mathscr{A}_p$.
Since then, the study of weighted inequalities
has developed extensively. For example,
for any $p\in(1,\infty)$ and $W\in\mathscr{A}_p$,
the boundedness of the Hardy--Littlewood maximal operator
and singular integral operators on $L^p(W)$ was
established by Christ and Goldberg \cite{cg01,g03}.
Isralowitz and Moen \cite{im19} introduced the
matrix weight class $\mathscr{A}_{p,q}$
for any $1<p\leq q<\infty$ and showed
the quantitative boundedness of fractional maximal
operators and fractional integral operators
in the matrix-weighted setting (see Theorems \ref{thm-bound-max}
and \ref{thm-bound-fracint}).
Note that the matrix weight class $\mathscr{A}_{p,p}$ is
exactly the matrix weight class $\mathscr{A}_{p}$.
Recently, Bownik and Cruz-Uribe \cite{bc22}
established the Jones factorization theorem and the Rubio de Francia
extrapolation theorem for $\mathscr{A}_p$. Moreover,
we also refer to \cite{cimpr21,llor24a,llor24b}
for the study of matrix-weighted weak type inequalities
and to \cite{ipt22} for the results on commutators
in two matrix-weighted setting.

In recent years, the quantitative boundedness of
Calder\'on--Zygmund operators on $L^{p}(W)$ has
attracted widespread attention.
A major breakthrough in this area occurred in 2017 when
Nazarov et al. \cite{nptv} established the $L^2(W)$-norm
inequality for Calder\'on--Zygmund operators,
obtaining an upper bound of $C[W]^{\frac{3}{2}}_{\mathscr{A}_2}$
via convex body domination. Surprisingly, Domelevo et al. \cite{dptv24}
recently proved that the exponent $\frac{3}{2}$ is indeed sharp for the Hilbert transform.

On the other hand, Lemari\'{e}-Rieusset \cite{le94}
first introduced the local Muckenhoupt weight class $A^{\operatorname{loc}}_p$
for any $p\in(1,\infty)$ and studied
unconditional wavelet bases for $L^p(w)$
with $w\in A^{\operatorname{loc}}_p$.
Compared to classical $A_p$ weights, such weights are
defined as in \eqref{eq-w-Ap}
with the supremum taken over only all cubes with edge length
not greater than 1. Later, Rychkov \cite{ry01} developed the
Littlewood--Paley theory of inhomogeneous weighted Besov spaces
$B^s_{p,q}(w)$ and Triebel--Lizorkin spaces
$F^s_{p,q}(w)$ with $w\in A^{\operatorname{loc}}_p$.
Due to the restriction on the scale, such classes
contain the function $e^{|\cdot|}$ as a typical example
(see Proposition \ref{prop-exp-locAp}).
Recently, the theory of local weights was
developed in various directions.
We refer to \cite{inns20,ns21} for the study of variable local weights,
\cite{ooi24,ooi25,ooi26} for the study of capacity theory associated
with $A^{\operatorname{loc}}_p$ weights, and
\cite{csyyy25,is09} for the study of function spaces associated with
$A^{\operatorname{loc}}_p$ weights.

In contrast to the well-developed theory of local scalar weights,
the study of local matrix weights and their
quantitative inequalities remains largely unexplored.
In this article, we give various real-variable properties of
local Muckenhoupt matrix weights $W$
and establish the quantitative boundedness of several operators
on $L^p(W)$. The main challenge arises from the non-separability
between matrix weights and vector-valued functions
under consideration combined with the scale restriction.
To overcome these difficulties, we first give an extension
property of the local matrix weight class
$\mathscr{A}^{\operatorname{loc}}_{p,q}(r)$
(see Theorem \ref{thm-extension}),
which provides a way to relate local matrix weights to
global ones while maintaining control over the weight constants
and hence clarifies their relationships with
global matrix weights.
Next, we establish a sharp quantitative scale lifting
theorem for $\mathscr{A}^{\operatorname{loc}}_{p,q}(r)$
to balance the locality of weights and operators under consideration
(see Theorem \ref{thm:Apq(1+eta)} and Example \ref{exam-sharpness-power-2}
for the sharpness). Via these two tools,
we obtain quantitative weighted bounds on
local matrix-weighted Lebesgue spaces for several classes of operators,
including local (fractional) maximal operators,
local fractional integral operators,
local Haar square functions, and
Calder\'{o}n--Zygmund operators with exponential decay
(see Theorems \ref{thm-bound-maxloc}, \ref{thm-bound-fracintloc},
\ref{thm-HAAR-Aploc}, \ref{thm-Haar-Aploc}, and \ref{thm-CZloc}).
For local fractional maximal operators,
we obtain the sharp quantitative bounds when $q\leq p'$
(see Remark \ref{rmk-max}),
while, for local fractional integral operators,
we obtain the quantitative bounds matching the global best known
exponents, whose scalar case is known to be sharp
(see Theorem \ref{thm-bound-fracintloc} and Corollary \ref{cor-bound-fracintloc-scalar}).
In the one-dimensional scalar-valued setting, we also derive
the sharp quantitative weighted boundedness of the local maximal operator
for any $p\in(1,\infty)$ (see Section \ref{s4}). As an application, we establish
the quantitative boundedness on $L^p(W)$ of the Riesz transform
associated with Schr\"{o}dinger operators $-\Delta+m^2I$
with $m\in(0,\infty)$ being large enough,
whose quantitative bound when $p=2$ is precisely
$[W]_{\mathscr{A}^{\operatorname{loc}}_{2}(r)}^{\frac 32}$.

The remainder of this article is organized as follows.

In Section \ref{s1}, we first recall the concept of
local $A^{\operatorname{loc}}_p(r)$ weights and
identify the precise range of $c, \alpha\in\mathbb{R}$ such
that $e^{c|\cdot|^{\alpha}}\in A^{\operatorname{loc}}_{p}(r)$
(see Definition \ref{def-w-locAp} and Proposition \ref{prop-exp-locAp}).
Next, we introduce local matrix $\mathscr{A}^{\operatorname{loc}}_{p,q}(r)$
weights and establish their equivalent definition in terms of reducing operators
(see Definition \ref{def-locAp} and Lemma \ref{lem-W-dualW-dkps}).
Finally, we show that the local matrix weight constant is at least 1
(see Corollary \ref{coro-[W]geq1}).

In Section \ref{s2}, we establish an extension theorem for the
local matrix weight class $\mathscr{A}^{\operatorname{loc}}_{p,q}(r)$
(see Theorem \ref{thm-extension}), which clarifies its relationship
with the global one. Moreover, via a quantitative estimate of
local matrix weight constants,
we show that the class $\mathscr{A}^{\operatorname{loc}}_{p,q}(r)$
has the sharp scale lifting
property (see Theorem \ref{thm:Apq(1+eta)}),
which further indicates that this class is independent of the
scale $r\in(0,\infty)$. Moreover, in Example \ref{exam-sharpness-power-2},
we also show that the estimate in
Theorem \ref{thm:Apq(1+eta)} is sharp.

In Section \ref{s3}, we establish the
quantitative boundedness of several operators
on local matrix-weighted Lebesgue spaces.
To be precise, Subsections \ref{s3-1} and \ref{s3-2}
provide the boundedness of fractional maximal
and fractional integral operators (see Theorems \ref{thm-bound-maxloc}
and \ref{thm-bound-fracintloc}).
We also characterize the local matrix weights via
the boundedness of local maximal operators (see Corollary \ref{cor-Apq-Mloc}).
In Subsection \ref{s3-3}, for a fixed scale $j\in\mathbb{Z}$,
we use two types of local Haar square functions
$S_{W,p,j}$ and $\widetilde{S}_{W,p,j}$ and
the dyadic averaging operators $E_j$ to characterize
the norm of $L^p(W)$, where $p\in(1,\infty)$ and
$W\in \mathscr{A}^{\operatorname{loc}}_{p}(2^{-j})$
(see Theorems \ref{thm-HAAR-Aploc} and \ref{thm-Haar-Aploc}).
Subsection \ref{s3-4} turns to Calder\'{o}n--Zygmund operators.
To match the exponential doubling property of local matrix
$\mathscr{A}^{\operatorname{loc}}_{p}(r)$ weights,
motivated by \cite{dlt25}, we introduce the class of Calder\'{o}n--Zygmund
operators with exponential decay
and show their boundedness on local matrix-weighted Lebesgue spaces
(see Theorem \ref{thm-CZloc}).  As an application,
we show that the Riesz transform associated with
Schr\"{o}dinger operators $-\Delta+m^2I$
is exactly a Calder\'{o}n--Zygmund operator with exponential decay
and hence establish the quantitative boundedness
on $L^p(W)$ of this operator with $m\in(0,\infty)$ being large enough
(see Example \ref{exam-Riesz-bessel}).

Finally, Section \ref{s4} presents the sharp weighted
bound on local weighted Lebesgue spaces for local maximal
operators for the full range
in the one-dimensional and scalar-valued setting
[see Remark \ref{rmk-max}(ii) and Theorem \ref{thm:strong}].

We conclude this introduction with a few notational conventions.
Throughout this article, we work in $\mathbb{R}^n$ and, unless
otherwise specified, take $\mathbb{R}^n$ as the underlying space.
Let $\mathbb{Z}$ be the set of all integers, $\mathbb{N}:=\{1,2,\dots\}$,
and $\mathbb{Z}_+:=\mathbb{N}\cup\{0\}$.
All the cubes $Q\subset\mathbb{R}^n$ in this article
are always assumed to have their edges parallel to the coordinate
axes. For any cube $Q\subset\mathbb{R}^n$, let $\ell(Q)$
be its \emph{edge length} and, for any $r\in(0,\infty)$,
let $rQ$ be the cube with the
same center as $Q$ and the edge length $r\ell(Q)$.
Let $\mathcal{D}:=\{Q_{j, k}\}_{j\in\mathbb{Z}, k\in{\mathbb{Z}}^n}
:=\{2^{-j}([0, 1)^n+k)\}_{j\in\mathbb{Z}, k\in{\mathbb{Z}}^n}$
be the set of all \emph{dyadic cubes} in $\mathbb{R}^n$.
Let $\mathbf{0}$ denote the \emph{origin}
of $\mathbb{R}^n$ or $\mathbb{C}^m$.
For any set $E\subset\mathbb{R}^n$,
let $\mathbf 1_E$ be its \emph{characteristic function}.
For any measurable set $E\subset\mathbb{R}^n$
with $|E|\in(0,\infty)$ and for any
positive measurable or integrable function $f$ on $E$, let
$$\fint_E f(x)\,dx:=\frac{1}{|E|}\int_{E}f(x)\,dx.$$
For any $p\in(1,\infty)$,
let $p':=\frac{p}{p-1}$ be its \emph{conjugate index}
(that is, $\frac{1}{p}+\frac{1}{p'}=1$) and,
for any $a\in\mathbb{R}$, let $\lceil a\rceil$ be the
smallest integer not less than $a$.
For any $x\in\mathbb{R}^n$ and $r\in(0,\infty)$, let
\begin{align*}
B(x,r):=\left\{y\in\mathbb{R}^n:\ \left|x-y\right|<r\right\}.
\end{align*}
For any $f\in L^1$, its \emph{Fourier transform}
$\widehat{f}$ is defined by setting,
for any $\xi\in\mathbb{R}^n$,
$$\widehat{f}(\xi):=\int_{\mathbb{R}^n}f(x)
e^{-i x\cdot \xi}\,dx,$$
where $i=\sqrt{-1}$.
The symbol $C$ denotes a positive constant which is independent
of the main parameters involved, but may vary from line to line.
The notation $A\lesssim B$ means that $A\leq CB$ for some positive constant $C$,
while $A\sim B$ means $A\lesssim B\lesssim A$.
Finally, in all subsequent proofs we shall retain
the notation introduced in the original theorem
(or the relevant statement).

\section{Local Matrix Weights}\label{s1}

In this section, we first recall the definition of
local weight classes $A^{\operatorname{loc}}_p(r)$
and present typical examples in these classes in terms of
exponential functions. Next, we introduce the local matrix
weight classes $\mathscr{A}^{\operatorname{loc}}_{p,q}(r)$
and establish their equivalent characterizations
in terms of reducing operators.
Finally, we show that the local matrix weight constant is at least 1.

We call a non-negative locally integrable
function on $\mathbb{R}^n$ a \emph{scalar weight} if it
takes values in $(0, \infty)$ almost everywhere
(see, for instance, \cite[p.\,499]{g14c}).
We also recall the concept of local weight classes
$A_{p}^{\mathrm{loc}}(r)$ introduced in
\cite[(1.1)]{ry01}.

\begin{definition}\label{def-w-locAp}
Let $r\in(0,\infty)$ and $p\in(1,\infty)$.
The \emph{local scalar Muckenhoupt class $A_{p}^{\mathrm{loc}}(r)$}
is defined to be the set of all scalar weights $w$ such that
\begin{align}\label{eq-w-locAp}
[w]_{A_{p}^{\mathrm{loc}}(r)}:=
\sup_{\{\operatorname{cube} Q:\,\ell(Q)\leq r\}}
\fint_Q w(x)\,dx\left\{\fint_Q \left[w(x)\right]^{-\frac{p^{\prime}}{p}}
\,dx\right\}^{\frac{p}{p^{\prime}}}<\infty.
\end{align}
\end{definition}

\begin{remark}
In \eqref{eq-w-locAp}, if the supremum is taken over
all cubes $Q\subset \mathbb{R}^n$,
then we obtain the well-known scalar Muckenhoupt class $A_p$
[see \eqref{eq-w-Ap}].
Obviously, for any $r\in(0,\infty)$ and $p\in(1,\infty)$,
$A_p \subset A_{p}^{\operatorname{loc}}(r)$. Moreover,
this inclusion is also proper. Indeed, by verifying the doubling
condition of the class $A_p$
(see, for instance, \cite[Proposition 7.1.5(9)]{g14c}),
it is easy to find that $e^{|\cdot|}\notin A_{p}$.
To see $e^{|\cdot|} \in A_{p}^{\operatorname{loc}}(r)$,
we give the following general proposition,
which determines when an exponential function
is a local Muckenhoupt weight.
\end{remark}

\begin{proposition}\label{prop-exp-locAp}
Let $r\in(0,\infty)$, $p\in(1,\infty)$, and $c,\alpha\in\mathbb{R}$.
Then $e^{c|\cdot|^{\alpha}}\in A^{\operatorname{loc}}_{p}(r)$
if and only if $c=0$ or $\alpha\in[0,1]$.
\end{proposition}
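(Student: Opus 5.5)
Write $w:=e^{c|\cdot|^{\alpha}}$ and $\phi(x):=c|x|^{\alpha}$, so that the quantity in \eqref{eq-w-locAp} for a cube $Q$ is
$$\fint_Q e^{\phi}\Big(\fint_Q e^{-\frac{p'}{p}\phi}\Big)^{\frac{p}{p'}}.$$
The plan rests on one elementary bound. If $\phi$ oscillates by at most $K$ on $Q$, meaning $\sup_Q\phi-\inf_Q\phi\le K$, then this quantity is at most $e^{K}$. The reason is that $\fint_Q e^{\phi}\le e^{\sup_Q\phi}$ and $\big(\fint_Q e^{-\frac{p'}{p}\phi}\big)^{\frac{p}{p'}}\le e^{-\inf_Q\phi}$.

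\emph{Sufficiency.} If $c=0$ then $w\equiv1$ and there is nothing to prove. If $\alpha=0$ then $w$ is constant off the origin, hence also trivially in $A^{\operatorname{loc}}_p(r)$ (for $\alpha=0$ interpret $|x|^0=1$). So take $\alpha\in(0,1]$. The function $t\mapsto t^{\alpha}$ is subadditive on $[0,\infty)$, which gives $\big||x|^{\alpha}-|y|^{\alpha}\big|\le|x-y|^{\alpha}$. For $x,y\in Q$ with $\ell(Q)\le r$ we have $|x-y|\le\sqrt n\,r$. Hence $\phi$ oscillates by at most $|c|(\sqrt n\,r)^{\alpha}$ on every admissible cube, and the elementary bound gives $[w]_{A^{\operatorname{loc}}_p(r)}\le e^{|c|(\sqrt n r)^{\alpha}}<\infty$.

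\emph{Necessity.} Suppose $c\ne0$ and $\alpha\notin[0,1]$. There are two cases.

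\emph{Case $\alpha>1$.} Here the weight grows too fast at infinity. Fix the cube $Q_R:=[R,R+r]\times[0,r]^{n-1}$, take $\alpha>1$ and $c>0$, and consider $R\to\infty$. On the subcube where $x_1\in[R+r/2,R+r]$ we have
$$\phi\ge cR^{\alpha}+c\alpha R^{\alpha-1}\tfrac r2,$$
since $|x|\ge x_1$ and $t^{\alpha}$ is convex. On the subcube where $x_1\in[R,R+r/4]$ and the other coordinates lie in $[0,r/4]$, we have $|x|\le R+r/4+O(1/R)$, so
$$\phi\le cR^{\alpha}+c\alpha(R+r)^{\alpha-1}\big(\tfrac r4+O(R^{-1})\big).$$
Both subcubes have measure comparable to $|Q_R|$. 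Restricting the first average to the first subcube and the second average to the second subcube, the $A_p$ quantity is at least a constant times $e^{c\alpha R^{\alpha-1}r/4\,(1+o(1))}$, which tends to $\infty$. If $c<0$, the same argument applies with the roles of $w$ and $w^{-p'/p}$ swapped.

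\emph{Case $\alpha<0$.} Here the issue is the local singularity at the origin. Take $Q$ a small cube centred at $0$. If $c>0$, then $e^{c|x|^{\alpha}}$ blows up faster than any power, so $\int_Q w=\infty$ and $w$ is not even locally integrable; hence it is not a weight. If $c<0$, then $w^{-p'/p}=e^{|c|\frac{p'}{p}|x|^{\alpha}}$ is not integrable near $0$, so the quantity for $Q$ is infinite. In both subcases $w\notin A^{\operatorname{loc}}_p(r)$.

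The main obstacle is the careful lower bound in the case $\alpha>1$, where one has to choose subcubes so that the gap between $\phi$ on the two pieces genuinely grows with $R$. If the constants in the explicit subcube estimate become awkward, the fallback is the scalar doubling property implied by the local $A_p$ condition. Applying Hölder to the $A_p$ inequality gives $w(Q)\le[w]\,(|Q|/|E|)^{p}\,w(E)$ for $E\subset Q$ with $\ell(Q)\le r$. Taking $E$ to be the subcube nearest the origin and $Q=Q_R$, the ratio $w(Q)/w(E)$ grows like $e^{cR^{\alpha-1}}$, which contradicts this bound.
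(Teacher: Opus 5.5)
Your proof is correct and follows essentially the same route as the paper. It uses the subadditivity bound $\bigl||x|^{\alpha}-|y|^{\alpha}\bigr|\le|x-y|^{\alpha}$ for $\alpha\in(0,1]$ and non-integrability at the origin for $\alpha<0$. For $\alpha>1$ it restricts the two averages to a far subcube and a near subcube of a side-$r$ cube drifting to infinity, and your handling of $c<0$ by swapping the roles of the two subcubes matches the paper's. The only cosmetic difference is that the paper uses the diagonal cubes $[k,k+r]^n$ with corner subcubes, on which $\inf|x|$ and $\sup|x|$ are exactly $\sqrt{n}(k+\frac{2}{3}r)$ and $\sqrt{n}(k+\frac{1}{3}r)$; this avoids the $O(R^{-1})$ correction your strip-shaped cube requires.
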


\begin{proof}
If $c=0$ or $\alpha=0$, then $e^{c|\cdot|^{\alpha}}$ is obviously
a constant function. Thus, $e^{c|\cdot|^{\alpha}}\in
A^{\operatorname{loc}}_{p}(r)$.
In what follows, we assume that $c\neq0$ and $\alpha\neq0$ and
consider three cases for the exponent $\alpha$.

\emph{Case (1)} $\alpha\in(-\infty, 0)$.
In this case, if $c\in(0,\infty)$, then, for any cube $Q\subset\mathbb{R}^n$
containing the origin $\mathbf{0}$,
$$
\int_{Q}e^{c|x|^{\alpha}}\,dx=\infty,
$$
which implies $[e^{c|\cdot|^{\alpha}}]_{A^{\operatorname{loc}}_{p}(r)}=\infty$.
On the other hand, if $c\in(-\infty,0)$,
then $-c\frac{p^{\prime}}{p} > 0$.
Thus, for any cube $Q\subset\mathbb{R}^n$
containing the origin $\mathbf{0}$,
$$
\int_{Q}e^{-c|x|^{\alpha}\frac{p^{\prime}}{p}}\,dx=\infty,
$$
and hence $[e^{c|\cdot|^{\alpha}}]_{A^{\operatorname{loc}}_{p}(r)}=\infty$.
This completes the proof of this case.

\emph{Case (2)} $\alpha\in(1, \infty)$.
In this case, we first consider the case $c\in(0,\infty)$.
Let $k\in\mathbb{N}$ and $Q_k:=[k, k+r]^n$
be a cube in $\mathbb{R}^n$. We next construct the
following two sub-cubes
$$Q^{(1)}_k:=\left[k, k+\frac{1}{3}r\right]^n \quad \text{and}
\quad Q^{(2)}_k:=\left[k+\frac{2}{3}r, k+r\right]^n.$$
Observe that $|Q^{(1)}_k|=|Q^{(2)}_k|=3^{-n}|Q_k|$.
Using \eqref{eq-w-locAp} and restricting the integration
domains to these two sub-cubes, we find that
\begin{align}\label{eq-exp-locAp}
[e^{c|\cdot|^{\alpha}}]_{A^{\operatorname{loc}}_{p}(r)}
&\geq\fint_{Q_k} e^{c|x|^{\alpha}}\,dx
\left(\fint_{Q_k} e^{-c|x|^{\alpha}\frac{p^{\prime}}{p}}
\,dx\right)^{\frac{p}{p^{\prime}}}\\
&\geq \frac{|Q^{(2)}_k|}{|Q_k|} \inf_{x \in Q^{(2)}_k}
e^{c|x|^{\alpha}}
\left(\frac{|Q^{(1)}_k|}{|Q_k|} \inf_{y \in Q^{(1)}_k} e^{-c|y|^{\alpha}\frac{p^{\prime}}{p}}
\right)^{\frac{p}{p^{\prime}}}\nonumber\\
&\gtrsim e^{c[\sqrt{n}(k+\frac{2}{3}r)]^\alpha-
c[\sqrt{n}(k+\frac{1}{3}r)]^\alpha}
=e^{cn^{\frac{\alpha}{2}}[(k+\frac{2}{3}r)^\alpha
-(k+\frac{1}{3}r)^\alpha]}.\nonumber
\end{align}
By the mean value theorem, there exists
$\xi \in (k+\frac{1}{3}r, k+\frac{2}{3}r)$ such that
\begin{align*}
\left(k+\frac{2}{3}r\right)^\alpha-\left(k+\frac{1}{3}r\right)^\alpha
=\alpha\xi^{\alpha-1} \frac{1}{3}r.
\end{align*}
Note that $\xi^{\alpha-1} \to \infty$ as $k\to\infty$
because $\alpha > 1$.
Letting $k\to\infty$ in \eqref{eq-exp-locAp},
we obtain $[e^{c|\cdot|^{\alpha}}]_{A^{\operatorname{loc}}_{p}(r)}=\infty$.

Next, if $c\in(-\infty,0)$, we can apply the same argument
as in \eqref{eq-exp-locAp} by simply swapping the
roles of $Q^{(1)}_k$ and $Q^{(2)}_k$ and then
conclude again that
$[e^{c|\cdot|^{\alpha}}]_{A^{\operatorname{loc}}_{p}(r)}=\infty$.
This completes the proof of this case.

\emph{Case (3)} $\alpha\in(0,1]$.
We first recall the following basic inequality:
for any $x,y\in\mathbb{R}^n$,
$$
\left||x|^{\alpha}-|y|^{\alpha}\right|\leq|x-y|^{\alpha}.
$$
By this inequality, we find that,
for any cube $Q\subset\mathbb{R}^n$ with
$\ell(Q)\leq r$ and for any $x,y\in Q$,
$|x-y| \leq \sqrt{n}r$ and
\begin{align}\label{eq-ex-ey}
\frac{e^{c|x|^{\alpha}}}{e^{c|y|^{\alpha}}}=
e^{c(|x|^{\alpha}-|y|^{\alpha})}\leq
e^{|c|||x|^{\alpha}-|y|^{\alpha}|}\leq
e^{|c||x-y|^{\alpha}}
\leq e^{|c|n^{\frac{\alpha}{2}}r^\alpha}.
\end{align}
It immediately follows from \eqref{eq-ex-ey} that
$$
\sup_{x\in Q}e^{c|x|^{\alpha}}
\leq e^{|c|n^{\frac{\alpha}{2}}r^\alpha}
\inf_{y\in Q}e^{c|y|^{\alpha}}.
$$
This, together with \eqref{eq-w-locAp}, further implies that,
for any cube $Q\subset\mathbb{R}^n$ with
$\ell(Q)\leq r$
\begin{align*}
\fint_Q e^{c|x|^{\alpha}}\,dx \left(\fint_Q
e^{-c|y|^{\alpha}\frac{p^{\prime}}{p}}\,dy
\right)^{\frac{p}{p^{\prime}}}
&\le\sup_{x\in Q}e^{c|x|^{\alpha}}
\left(\sup_{y \in Q} e^{-c|y|^{\alpha}
\frac{p^{\prime}}{p}}\right)^{\frac{p}{p^{\prime}}}\\
&=\sup_{x\in Q}e^{c|x|^{\alpha}}
\left( \inf_{y \in Q} e^{c|y|^{\alpha}} \right)^{-1}\\
&\leq e^{|c|n^{\frac{\alpha}{2}}r^\alpha}.
\end{align*}
Taking the supremum of its left-hand side over
all cubes $Q\subset\mathbb{R}^n$ with
$\ell(Q)\leq r$, we find that
$[e^{c|\cdot|^{\alpha}}]_{A_{p}^{\mathrm{loc}}(r)}<\infty$,
and hence $e^{c|\cdot|^{\alpha}}$ is a local Muckenhoupt $A_p$ weight.
This completes the proof of this case and hence
Proposition \ref{prop-exp-locAp}.
\end{proof}

Motivated by Definition \ref{def-w-locAp}, we next introduce the
local matrix Muckenhoupt weight classes. To this end,
we first recall some basic notation.
In what follows, we always use $m\in\mathbb{N}$ to
denote the dimension of vectors and the order of square matrices.
Let $M_m(\mathbb{C})$ be the set of all $m\times m$ complex matrices,
and let $I_m$ denote the identity matrix of order $m$.
For any $A\in M_m(\mathbb{C})$, its conjugate transpose is denoted by $A^*$
and its \emph{operator norm} $\|A\|$ is defined by setting
$$\|A\|:=\sup_{\vec z\in\mathbb{C}^m, |\vec z|=1}|A\vec{z}|.$$
Recall that $A\in M_m(\mathbb{C})$ is a \emph{unitary matrix} if $A^*A=I_m$
and is \emph{positive definite} (resp. \emph{positive semidefinite}) if
$\vec z^*A\vec z>0$ for any $\vec z\in\mathbb{C}^m\setminus\{\mathbf{0}\}$
(resp. $\vec{z}^*A\vec{z}\geq0$ for any $\vec z\in\mathbb{C}^m$)
(see, for instance, \cite[(7.1.1a) and (7.1.1b)]{hj13}).

By \cite[Theorems 2.5.6 and 7.2.1]{hj13},
for any given positive definite matrix $A\in M_m(\mathbb{C})$,
there exists a unitary matrix $U\in M_m(\mathbb{C})$ such that
\begin{align*}
A=U\operatorname{diag}\left(\lambda_1,\ldots,\lambda_m\right)U^{*},
\end{align*}
where $\{\lambda_i\}_{i=1}^m \subset (0,\infty)$ are the eigenvalues of $A$.
For any $\alpha\in\mathbb{R}$, let
$$A^{\alpha}:=U\operatorname{diag}
(\lambda^{\alpha}_1,\ldots,\lambda^{\alpha}_m)U^{*}.$$
It follows from \cite[p.\,408]{hj94} that $A^\alpha$
is independent of $U$ and hence well defined.

Let $D_m(\mathbb{C})$ denote the set of all $m\times m$
positive semidefinite matrices.
A matrix-valued function $W:\ \mathbb{R}^n\to D_m(\mathbb{C})$ is called
a \emph{matrix weight} if its entries are locally
integrable functions on $\mathbb{R}^n$
and $W(x)$ is positive definite for almost every $x\in\mathbb{R}^n$.
In this case, for any $\alpha\in\mathbb{R}$,
$W^{\alpha}$ is also a matrix-valued function
whose entries are all measurable functions on $\mathbb{R}^n$
(see, for instance, \cite[Lemma 2.3.5]{rs95}).

\begin{definition}\label{def-locAp}
Let $r\in(0,\infty)$ and $1<p\leq q<\infty$.
The \emph{local matrix Muckenhoupt class
$\mathscr{A}^{\operatorname{loc}}_{p,q}(r)$}
is defined to be the set of all matrix weights
$W:\ \mathbb{R}^n\to D_m(\mathbb{C})$ such that
\begin{align}\label{eq-locApq}
[W]_{\mathscr{A}^{\operatorname{loc}}_{p,q}(r)}
:=\sup_{\{\mathrm{cube}\,Q:\,\ell(Q)\leq r\}}
\fint_Q\left[\fint_Q\left\|W^{\frac{1}{q}}(x)
W^{-\frac{1}{q}}(y)\right\|^{p^{\prime}}
\,dy\right]^{\frac{q}{p^{\prime}}}\,dx<\infty.
\end{align}
\end{definition}

\begin{remark}
In \eqref{eq-locApq}, if
the supremum is taken over all cubes $Q\subset \mathbb{R}^n$,
then \eqref{eq-locApq} defines the
\emph{matrix Muckenhoupt class $\mathscr{A}_{p,q}$} introduced in
\cite[p.\,1331]{im19}. Furthermore, if $p=q$,
the class $\mathscr{A}_{p,q}$  is exactly the well-known
\emph{matrix Muckenhoupt class $\mathscr{A}_{p}$}.
When $m=1$, the local matrix Muckenhoupt class
$\mathscr{A}^{\operatorname{loc}}_{p,q}(r)$
reduces to the \emph{local scalar Muckenhoupt class
$A^{\operatorname{loc}}_{p,q}(r)$}.
\end{remark}

To characterize the condition \eqref{eq-locApq} of
matrix weights in terms of their associated averaging matrices,
we recall the concept of reducing operators,
which was originally introduced by Volberg \cite[(3.1)]{v97}.
Let $p\in(1,\infty)$ and $E\subset\mathbb{R}^n$ be a bounded measurable set
with positive measure. Suppose that $W$ is a matrix weight.
From \cite[Proposition 1.2]{g03},
it follows that there exists a positive definite matrix
$A_E$ such that, for any $\vec z\in\mathbb{C}^m$,
\begin{equation}\label{eq-reduce}
\left[\fint_E\left|W^{\frac{1}{p}}(x)\vec{z}
\right|^p\,dx\right]^{\frac{1}{p}}\leq
\left|A_E\vec z\right|\leq \sqrt{m}\left[\fint_E
\left|W^{\frac{1}{p}}(x)\vec z\right|^p\,dx\right]^{\frac{1}{p}}.
\end{equation}
We call $A_E$ the \emph{reducing operator}
of order $p$ for $W$. The next equivalence immediately follows from
\eqref{eq-reduce}; we omit the details.

\begin{lemma}\label{lem-W-dualW-dkps}
Let $r\in(0,\infty)$, $1<p\leq q<\infty$, and $W$ be a matrix weight
such that $W^{-\frac{p^{\prime}}{q}}$ is locally integrable.
If $\{A_Q\}_{\{\operatorname{cube}\,Q:\,\ell(Q)\leq r\}}$ and
$\{\widetilde{A}_Q\}_{\{\operatorname{cube}\,Q:\,\ell(Q)\leq r\}}$
are sequences of reducing operators respectively,
of order $q$ for $W$ and of order $p^{\prime}$ for $W^{-\frac{p^{\prime}}{q}}$,
then
\begin{align*}
[W]^{\frac{1}{q}}_{\mathscr{A}^{\operatorname{loc}}_{p,q}(r)}
&\sim\sup_{\{\operatorname{cube}\,Q:\,\ell(Q)\leq r\}}
\left\|A_Q\widetilde{A}_Q\right\|,
\end{align*}
where the positive equivalence constants depend only on $p$, $q$, and $m$.
\end{lemma}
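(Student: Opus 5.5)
The plan is to prove the equivalence cube by cube, for each fixed cube $Q$ with $\ell(Q)\le r$, and only then take the supremum. The cubes are never compared with one another, so the restriction $\ell(Q)\le r$ plays no role in the argument. For fixed $Q$ we apply \eqref{eq-reduce} twice, once to the inner integral in $y$ and once to the outer integral in $x$.

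\emph{Inner integral.} Fix $x$ such that $W(x)$ is positive definite. Since every matrix involved is Hermitian, $\|W^{\frac1q}(x)W^{-\frac1q}(y)\|=\|W^{-\frac1q}(y)W^{\frac1q}(x)\|$. Next we compare the operator norm with column norms. Let $\vec e_1,\dots,\vec e_m$ be the standard basis. For any matrix $B$ we have $\max_i|B\vec e_i|\le\|B\|\le\sqrt m\max_i|B\vec e_i|$, and hence $\|B\|^{p'}\sim\sum_{i=1}^m|B\vec e_i|^{p'}$ with constants depending only on $m$ and $p'$. Now $W^{-\frac1q}(y)=\bigl(W^{-\frac{p'}{q}}(y)\bigr)^{\frac1{p'}}$. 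Applying \eqref{eq-reduce} of order $p'$ to the weight $W^{-\frac{p'}{q}}$ with the vectors $\vec z=W^{\frac1q}(x)\vec e_i$ gives
\begin{align*}
\left[\fint_Q\left\|W^{\frac1q}(x)W^{-\frac1q}(y)\right\|^{p'}dy\right]^{\frac1{p'}}
\sim\sum_{i=1}^m\left|\widetilde A_QW^{\frac1q}(x)\vec e_i\right|
\sim\left\|\widetilde A_QW^{\frac1q}(x)\right\|.
\end{align*}
The local integrability of $W^{-\frac{p'}{q}}$ is exactly what is needed for $\widetilde A_Q$ to exist.

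\emph{Outer integral.} Raise the previous display to the power $q$ and average over $x\in Q$. Since $\widetilde A_Q$ and $W^{\frac1q}(x)$ are Hermitian, $\|\widetilde A_QW^{\frac1q}(x)\|=\|W^{\frac1q}(x)\widetilde A_Q\|$. Using the column comparison again, now with exponent $q$, and then \eqref{eq-reduce} of order $q$ for $W$ with $\vec z=\widetilde A_Q\vec e_i$, we obtain
\begin{align*}
\fint_Q\left\|W^{\frac1q}(x)\widetilde A_Q\right\|^q dx\sim\sum_{i=1}^m\left|A_Q\widetilde A_Q\vec e_i\right|^q\sim\left\|A_Q\widetilde A_Q\right\|^q.
\end{align*}
Combining the two steps, the quantity inside the supremum in \eqref{eq-locApq} is comparable to $\|A_Q\widetilde A_Q\|^q$ for every such $Q$. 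Taking $q$-th roots and then the supremum over all cubes with $\ell(Q)\le r$ yields the claim.

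The equivalence constants depend only on $m$, $p$ and $q$: they come from the factor $\sqrt m$ in \eqref{eq-reduce} and from the comparisons between $\ell^{p'}$ or $\ell^q$ sums over $m$ terms and their maxima. No step is a real obstacle. The only point needing care is the order of factors inside the norms, which is where the self-adjointness swaps are used. One should also note that, by the inner estimate, finiteness of either side of the equivalence forces the other side to be finite.
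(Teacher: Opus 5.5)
Your proposal is correct. It is exactly the argument the paper has in mind when it says the equivalence follows immediately from \eqref{eq-reduce} and omits the details: for each fixed cube you apply the reducing-operator bounds of order $p'$ for $W^{-\frac{p'}{q}}$ to the inner integral and of order $q$ for $W$ to the outer one, using self-adjointness to reorder the factors and the comparison $\|B\|\sim\max_i|B\vec e_i|$ (with constants depending only on $m$, $p$, $q$), before taking the supremum.
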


It is well known that,
for any $p\in(1,\infty)$, a scalar weight $w\in A_p$
if and only if $w^{-\frac{p^{\prime}}{p}}\in A_{p^{\prime}}$.
The following corollary provides a corresponding
duality property of local matrix weights.

\begin{corollary}\label{cor-Ap-Ap'}
Let $r\in(0,\infty)$, $1<p\leq q<\infty$, and
$W$ be a matrix weight. Then
$W\in\mathscr{A}^{\operatorname{loc}}_{p,q}(r)$ if
and only if $\widetilde{W}:=W^{-\frac{p^{\prime}}{q}}\in
\mathscr{A}^{\operatorname{loc}}_{q^{\prime},p^{\prime}}(r)$.
Moreover,
\begin{align}\label{eq-W-dualW}
[W]^{\frac{1}{q}}_{\mathscr{A}^{\operatorname{loc}}_{p,q}(r)}
&\sim\left[\widetilde{W}\right]^{\frac{1}{p^{\prime}}}
_{\mathscr{A}^{\operatorname{loc}}_{q^{\prime},p^{\prime}}(r)},
\end{align}
where the positive equivalence constants depend only on $p$, $q$, and $m$.
\end{corollary}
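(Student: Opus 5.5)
The plan is to reduce everything to Lemma \ref{lem-W-dualW-dkps}, applied once to $W$ and once to $\widetilde W:=W^{-\frac{p'}{q}}$, and then compare the two resulting suprema of norms of products of reducing operators. First fix exponents. For $\widetilde W\in\mathscr{A}^{\operatorname{loc}}_{q',p'}(r)$ the roles are: "$p$" becomes $q'$ and "$q$" becomes $p'$. Note that $1<q'\le p'<\infty$ because $p\le q$, so the definition applies. The dual weight of $\widetilde W$ is
\[
\widetilde W^{-\frac{(q')'}{p'}}=\widetilde W^{-\frac{q}{p'}}=W,
\]
so the duality is an involution.

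We may assume $W\in\mathscr{A}^{\operatorname{loc}}_{p,q}(r)$, or symmetrically $\widetilde W\in\mathscr{A}^{\operatorname{loc}}_{q',p'}(r)$. Finiteness of the constant forces the local integrability hypotheses of the lemma. For $W$, finiteness of the inner integral in \eqref{eq-locApq} for a.e.\ $x$ gives $\int_Q\|W^{-1/q}(y)\|^{p'}\,dy<\infty$ on every small cube $Q$. Since $\|W^{-p'/q}\|=\|W^{-1/q}\|^{p'}$, the entries of $W^{-p'/q}$ are locally integrable. For the reverse direction the needed hypothesis is local integrability of $W$ itself, which holds because $W$ is a matrix weight. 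So in both directions the lemma is applicable.

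Now let $A_Q$ be the reducing operator of order $q$ for $W$, and $\widetilde A_Q$ the reducing operator of order $p'$ for $\widetilde W$. The lemma for $W$ gives
\[
[W]^{1/q}\sim\sup_{\ell(Q)\le r}\bigl\|A_Q\widetilde A_Q\bigr\|.
\]
Applied to $\widetilde W$, it requires reducing operators of order $p'$ for $\widetilde W$, which is exactly $\widetilde A_Q$, and of order $q$ for $\widetilde W^{-q/p'}=W$, which is exactly $A_Q$. Hence
\[
[\widetilde W]^{1/p'}\sim\sup_{\ell(Q)\le r}\bigl\|\widetilde A_Q A_Q\bigr\|.
\]
Since $A_Q$ and $\widetilde A_Q$ are Hermitian (positive definite),
\[
\bigl\|\widetilde A_Q A_Q\bigr\|=\bigl\|(A_Q\widetilde A_Q)^*\bigr\|=\bigl\|A_Q\widetilde A_Q\bigr\|,
\]
so the two suprema coincide. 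This yields \eqref{eq-W-dualW} with constants depending only on $p$, $q$, $m$, and also the "if and only if".

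The only delicate point is bookkeeping: checking that the exponent pair $(q',p')$ is admissible, that the reducing operators match the orders the lemma demands, and that the integrability hypotheses are satisfied. The last point is handled by the observation above that a finite constant implies them.
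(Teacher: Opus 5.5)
Your proposal is correct and follows the paper's proof: apply Lemma \ref{lem-W-dualW-dkps} to both $W$ and $\widetilde W$ (the paper's ``by symmetry'', which you make explicit via $\|\widetilde A_QA_Q\|=\|A_Q\widetilde A_Q\|$), with the only real work being the local integrability of $\widetilde W$ when $[W]_{\mathscr{A}^{\operatorname{loc}}_{p,q}(r)}<\infty$. The paper proves that integrability by integrating $\|W^{-1/q}(y)\|\le\|W^{-1/q}(x)\|\,\|W^{1/q}(x)W^{-1/q}(y)\|$ in $x$ against $\|W^{-1}(x)\|^{-1}$, whereas you fix one point $x$ where the inner integral is finite and $W(x)$ is invertible; this is equivalent and slightly more direct, but you should state that pointwise inequality explicitly.
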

\begin{proof}
By symmetry and Lemma \ref{lem-W-dualW-dkps}, it is enough to show that,
if $W\in\mathscr{A}^{\operatorname{loc}}_{p,q}(r)$,
then $\widetilde{W}$ is locally integrable.
For any cube $Q\subset\mathbb{R}^n$ with
$\ell(Q)\leq r$ and for almost every $x\in Q$,
\begin{align*}
\int_{Q}\left\|W^{-\frac{p^{\prime}}{q}}(y)\right\|\,dy
&=\int_{Q}\left\|W^{-\frac{1}{q}}(y)\right\|^{p^{\prime}}\,dy\\
&\leq\int_Q\left\|W^{\frac{1}{q}}(x)
W^{-\frac{1}{q}}(y)\right\|^{p^{\prime}}\,dy
\left\|W^{-\frac{1}{q}}(x)\right\|^{p^{\prime}}.
\end{align*}
Taking the integral on $Q$ with respect to the variable $x$ and using
the assumption that $W\in\mathscr{A}^{\operatorname{loc}}_{p,q}(r)$,
we conclude that
\begin{align*}
&\int_{Q}\left\|W^{-1}(x)\right\|^{-1}\,dx
\left[\int_{Q}\left\|W^{-\frac{p^{\prime}}{q}}(y)\right\|\,dy
\right]^{\frac{q}{p^{\prime}}}\\
&\quad\leq\int_{Q}\left[\int_{Q}\left\|W^{\frac{1}{q}}(x)
W^{-\frac{1}{q}}(y)\right\|^{p^{\prime}}
\,dy\right]^{\frac{q}{p^{\prime}}}\,dx<\infty.
\end{align*}
This, together with the fact that $W(x)$ is positive definite for
almost every $x\in\mathbb{R}^n$, further implies that
\begin{align*}
\int_{Q}\left\|W^{-\frac{p^{\prime}}{q}}(y)\right\|\,dy<\infty.
\end{align*}
Thus, it follows from the arbitrariness of cubes $Q$
that $\widetilde{W}$ is locally integrable.
This completes the proof of Corollary \ref{cor-Ap-Ap'}.
\end{proof}

The following lemma provides a uniform scalarization
of local matrix weights.

\begin{lemma}\label{lem-W-w}
If $r\in(0,\infty)$, $1<p\leq q<\infty$, and $W\in\mathscr{A}^{\operatorname{loc}}_{p,q}(r)$,
then, for any $\vec{z}\in\mathbb{C}^m\setminus\{\mathbf{0}\}$,
$w_{\vec{z}}:=|W^{\frac{1}{q}}\vec{z}|^q\in
A^{\operatorname{loc}}_{\gamma}(r)$ and
\begin{align}\label{eq-W-w-sup}
\sup_{\vec{z}\in\mathbb{C}^m\setminus\{\mathbf{0}\}}
\left[w_{\vec{z}}\right]_{A^{\operatorname{loc}}_{\gamma}(r)}
\leq \left[W\right]_{\mathscr{A}^{\operatorname{loc}}_{p,q}(r)},
\end{align}
where $\gamma:= 1 + \frac{q}{p^{\prime}}$.
\end{lemma}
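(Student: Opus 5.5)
We fix a cube $Q$ with $\ell(Q)\le r$ and a vector $\vec z\neq\mathbf 0$. The aim is to bound the $A^{\operatorname{loc}}_\gamma(r)$ quantity of $w_{\vec z}$ on $Q$ by the integrand in \eqref{eq-locApq}; taking the supremum over $Q$ and $\vec z$ then gives \eqref{eq-W-w-sup}.

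\textbf{Exponents.} With $\gamma=1+\frac{q}{p'}$ we have $\gamma-1=\frac{q}{p'}$ and $\gamma'=\frac{\gamma}{\gamma-1}$. Hence $\frac{\gamma'}{\gamma}=\frac{1}{\gamma-1}=\frac{p'}{q}$. The quantity to control is therefore
\begin{align*}
\fint_Q \left|W^{\frac1q}(x)\vec z\right|^q\,dx
\left[\fint_Q \left|W^{\frac1q}(y)\vec z\right|^{-p'}\,dy\right]^{\frac{q}{p'}}.
\end{align*}

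\textbf{Pointwise step.} For $x,y\in Q$ we write $W^{\frac1q}(x)\vec z=W^{\frac1q}(x)W^{-\frac1q}(y)\,W^{\frac1q}(y)\vec z$, which gives
\begin{align*}
\left|W^{\frac1q}(x)\vec z\right|\le \left\|W^{\frac1q}(x)W^{-\frac1q}(y)\right\|\left|W^{\frac1q}(y)\vec z\right|.
\end{align*}
Since $W(y)$ is positive definite almost everywhere, $|W^{\frac1q}(y)\vec z|>0$ almost everywhere. We may therefore divide, getting
\begin{align*}
\left|W^{\frac1q}(x)\vec z\right|^{p'}\left|W^{\frac1q}(y)\vec z\right|^{-p'}\le\left\|W^{\frac1q}(x)W^{-\frac1q}(y)\right\|^{p'}.
\end{align*}

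\textbf{Integration.} Average this inequality in $y$ over $Q$ and raise to the power $\frac{q}{p'}$. This gives, for almost every $x\in Q$,
\begin{align*}
\left|W^{\frac1q}(x)\vec z\right|^{q}\left[\fint_Q\left|W^{\frac1q}(y)\vec z\right|^{-p'}dy\right]^{\frac q{p'}}\le\left[\fint_Q\left\|W^{\frac1q}(x)W^{-\frac1q}(y)\right\|^{p'}dy\right]^{\frac q{p'}}.
\end{align*}
Averaging in $x$ over $Q$ yields the target quantity on the left and the integrand of \eqref{eq-locApq} on the right.

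\textbf{Membership in the weight class.} We also need $w_{\vec z}$ to be a scalar weight. It is positive almost everywhere. It is locally integrable because $w_{\vec z}\le\|W^{\frac1q}\|^q|\vec z|^q\lesssim\|W\|\,|\vec z|^q$, and the entries of $W$ are locally integrable.

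The only delicate point is the almost-everywhere division and the finiteness of $\fint_Q w_{\vec z}^{-p'}$. The latter follows from the displayed bound itself, as in the proof of Corollary \ref{cor-Ap-Ap'}: the right-hand side is finite, and $|W^{\frac1q}(x)\vec z|^{q}$ is positive on a set of positive measure. No real obstacle is expected.
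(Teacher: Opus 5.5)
Your proposal is correct and follows essentially the paper's argument. The paper uses the same pointwise bound $w_{\vec z}(x)^{\frac1q}w_{\vec z}(y)^{-\frac1q}\le\|W^{\frac1q}(x)W^{-\frac1q}(y)\|$, written there as the identity expressing the operator norm as the supremum over $\vec z$ of this ratio; it then uses $\frac{\gamma'}{\gamma}=\frac{p'}{q}$ and integrates, while you additionally spell out the scalar-weight and finiteness checks that the paper leaves implicit.
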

\begin{proof}
By the definition of operator norms, we find that,
for almost every $x,y\in\mathbb{R}^n$,
\begin{align}\label{eq-W-w}
\left\|W^{\frac{1}{q}}(x)W^{-\frac{1}{q}}(y)\right\|
&=\sup_{\vec{z}\in\mathbb{C}^m\setminus\{\mathbf{0}\}}
\frac{|W^{\frac{1}{q}}(x)W^{-\frac{1}{q}}(y)\vec{z}|}{|\vec{z}|}\\
&=\sup_{\vec{z}\in\mathbb{C}^m\setminus\{\mathbf{0}\}}
\frac{|W^{\frac{1}{q}}(x)\vec{z}|}{|W^{\frac{1}{q}}(y)\vec{z}|}
=\sup_{\vec{z}\in\mathbb{C}^m\setminus\{\mathbf{0}\}}
\left[w_{\vec{z}}(x)\right]^{\frac{1}{q}}
\left[w_{\vec{z}}(y)\right]^{-\frac{1}{q}}.\nonumber
\end{align}
Note that $\frac{\gamma^{\prime}}{\gamma} =\frac{p^{\prime}}{q}$.
Applying this, \eqref{eq-W-w}, and Definitions \ref{def-w-locAp}
and \ref{def-locAp}, we obtain, for any $\vec{z}\in
\mathbb{C}^m\setminus\{\mathbf{0}\}$,
\begin{align*}
\left[w_{\vec{z}}\right]_{A^{\operatorname{loc}}_{\gamma}(r)}
&=\sup_{\{\mathrm{cube}\,Q:\,\ell(Q)\leq r\}}
\fint_Q w_{\vec{z}}(x)\,dx\left[\fint_Q
w_{\vec{z}}(x)^{-\frac{\gamma^{\prime}}{\gamma}}
\,dx\right]^{\frac{\gamma}{\gamma^{\prime}}}\\
&\leq\sup_{\{\mathrm{cube}\,Q:\,\ell(Q)\leq r\}}
\fint_Q\left[\fint_Q\left\|W^{\frac{1}{q}}(x)
W^{-\frac{1}{q}}(y)\right\|^{p^{\prime}}
\,dy\right]^{\frac{q}{p^{\prime}}}\,dx
=[W]_{\mathscr{A}^{\operatorname{loc}}_{p,q}(r)},
\end{align*}
which further implies \eqref{eq-W-w-sup}.
This completes the proof of Lemma \ref{lem-W-w}.
\end{proof}

As a corollary of \eqref{eq-W-w-sup},
we show that the $\mathscr{A}^{\operatorname{loc}}_{p,q}(r)$
constant is always greater than or equal to 1.

\begin{corollary}\label{coro-[W]geq1}
Let $r\in(0,\infty)$ and $1<p\leq q<\infty$.
For any matrix weight $W$,
$[W]_{\mathscr{A}^{\operatorname{loc}}_{p,q}(r)}\in[1,\infty]$.
\end{corollary}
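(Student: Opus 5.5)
The plan is to reduce to the scalar case through Lemma \ref{lem-W-w} and then apply H\"older's inequality on a single cube. If $[W]_{\mathscr{A}^{\operatorname{loc}}_{p,q}(r)}=\infty$ there is nothing to prove, so assume it is finite. Then $W\in\mathscr{A}^{\operatorname{loc}}_{p,q}(r)$, and by \eqref{eq-W-w-sup}, for any fixed $\vec z\ne\mathbf{0}$,
\begin{align*}
[w_{\vec z}]_{A^{\operatorname{loc}}_{\gamma}(r)}\le[W]_{\mathscr{A}^{\operatorname{loc}}_{p,q}(r)},
\qquad w_{\vec z}=\bigl|W^{\frac1q}\vec z\bigr|^q,\quad \gamma=1+\tfrac{q}{p'}.
\end{align*}
So it is enough to show that every scalar weight $w$ satisfies $[w]_{A^{\operatorname{loc}}_\gamma(r)}\ge1$.

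Take $w=w_{\vec z}$. Since $W(x)$ is positive definite almost everywhere, $w\in(0,\infty)$ almost everywhere. Fix any cube $Q$ with $\ell(Q)\le r$. We may assume both averages $\fint_Q w$ and $\fint_Q w^{-\frac{\gamma'}{\gamma}}$ are finite, since otherwise the constant is already $\infty\ge1$; they are positive because $w>0$ almost everywhere.

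Write $1=w^{\frac{1}{1+s}}\,w^{-\frac{1}{1+s}}$ with $s:=\frac{\gamma'}{\gamma}=\frac{1}{\gamma-1}$. Apply H\"older's inequality with exponents $1+s$ and $\frac{1+s}{s}$ to this product over $Q$:
\begin{align*}
1=\fint_Q w^{\frac{1}{1+s}}w^{-\frac{1}{1+s}}
\le\left(\fint_Q w\right)^{\frac{1}{1+s}}\left(\fint_Q w^{-\frac1s}\right)^{\frac{s}{1+s}}.
\end{align*}
This is not yet the right shape, because $w^{-1/s}$ is not $w^{-s}$. The cleaner route is Jensen's inequality with the convex function $t\mapsto t^{-s}$ on $(0,\infty)$:
\begin{align*}
\fint_Q w^{-s}\ge\left(\fint_Q w\right)^{-s}.
\end{align*}
Raising to the power $\frac1s=\frac{\gamma}{\gamma'}$ gives
\begin{align*}
\left(\fint_Q w\right)\left(\fint_Q w^{-s}\right)^{\frac1s}\ge1.
\end{align*}
The left-hand side is exactly the $A^{\operatorname{loc}}_\gamma(r)$ quantity for $Q$. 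Taking the supremum over cubes gives $[w_{\vec z}]_{A^{\operatorname{loc}}_\gamma(r)}\ge1$, and therefore $[W]_{\mathscr{A}^{\operatorname{loc}}_{p,q}(r)}\ge1$.

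The only delicate point is bookkeeping: Lemma \ref{lem-W-w} as stated assumes $W\in\mathscr{A}^{\operatorname{loc}}_{p,q}(r)$. This is why the infinite case is handled separately first. The finiteness and positivity of the averages are likewise checked before Jensen's inequality is applied.
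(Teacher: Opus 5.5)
Your proof is correct and follows the paper's route. The paper also reduces, via Lemma \ref{lem-W-w}, to the scalar weight $w_{\vec z}=|W^{\frac1q}\vec z|^q$ and uses the fact that every scalar local $A_\gamma$ constant is at least $1$; the only difference is that the paper cites this scalar fact from Ooi, while you prove it directly with Jensen's inequality. You should delete the abandoned H\"older line, although H\"older would also work if you split $1=w^{\frac{s}{1+s}}w^{-\frac{s}{1+s}}$ and use the exponents $\frac{1+s}{s}$ and $1+s$.
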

\begin{proof}
If $W\notin \mathscr{A}^{\operatorname{loc}}_{p,q}(r)$,
then $[W]_{\mathscr{A}^{\operatorname{loc}}_{p,q}(r)}=\infty$
and the present corollary naturally holds.
In the remaining proof, we always assume that $W\in
\mathscr{A}^{\operatorname{loc}}_{p,q}(r)$.
It follows from \cite[Proposition 2.1.1]{ooi24}
and Lemma \ref{lem-W-w} that, for any
$\vec{z}\in\mathbb{C}^m\setminus\{\mathbf{0}\}$,
\begin{align*}
w_{\vec{z}}:=|W^{\frac{1}{q}}\vec{z}|^q\in A^{\operatorname{loc}}_{\gamma}(r)
\text{ and }[w_{\vec{z}}]_{A^{\operatorname{loc}}_{\gamma}(r)}\geq 1,
\end{align*}
where $\gamma:= 1 + \frac{q}{p^{\prime}}$. This, together with
\eqref{eq-W-w-sup}, further implies that
$[W]_{\mathscr{A}^{\operatorname{loc}}_{p,q}(r)}\geq 1$,
which completes the proof of Corollary \ref{coro-[W]geq1}.
\end{proof}

\section{Extension and Scaling Invariance of Local Matrix Weights}\label{s2}

In this section, we first establish the extension property
for the local matrix weight class $\mathscr{A}^{\operatorname{loc}}_{p,q}(r)$,
which provides a connection
between local matrix weights and their global counterparts.
Next, via a sharp quantitative estimate of
local matrix weight constants, we prove that
$\mathscr{A}^{\operatorname{loc}}_{p,q}(r)$ has a scale lifting property,
which further implies that $\mathscr{A}^{\operatorname{loc}}_{p,q}(r)$
is independent of the scale $r$.

The following theorem provides the aforementioned extension property
for local matrix  Muckenhoupt weights. Its proof is inspired by some ideas
from the proofs of \cite[Lemma 1.1]{ry01} and \cite[Theorem 2.2.1]{ooi24}.

\begin{theorem}\label{thm-extension}
Let $r\in(0,\infty)$,  $1<p\leq q<\infty$, and
$W\in \mathscr{A}^{\operatorname{loc}}_{p,q}(r)$.
For any given cube $Q$ with $\ell(Q)= r$, there
exists $V\in \mathscr{A}_{p,q}$ satisfying
$V=W$ on $Q$ and
\begin{align}\label{eq-norm-V-W}
\left[V\right]_{\mathscr{A}_{p,q}}\leq 3^{n\gamma}
\left[W\right]_{\mathscr{A}^{\operatorname{loc}}_{p,q}(r)},
\end{align}
where $\gamma:= 1 + \frac{q}{p^{\prime}}$.
\end{theorem}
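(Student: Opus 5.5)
We plan to build $V$ by periodic reflection of $W|_Q$, following the constructions in \cite[Lemma 1.1]{ry01} and \cite[Theorem 2.2.1]{ooi24}. After a translation we may assume $Q=[0,r)^n$. In one dimension, let $\phi:\mathbb{R}\to[0,r]$ be the $2r$-periodic ``tent'' map with $\phi(t)=t$ on $[0,r]$ and $\phi(t)=2r-t$ on $[r,2r]$. In $\mathbb{R}^n$ put $\Phi(x):=(\phi(x_1),\dots,\phi(x_n))$ and define $V(x):=W(\Phi(x))$. Then $V=W$ on $Q$, $V$ is a matrix weight, and $V$ is invariant under reflection across every hyperplane $x_i=kr$ with $k\in\mathbb{Z}$. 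Also, $\Phi$ maps each lattice cube $Q_k:=r(k+[0,1)^n)$ measure-preservingly onto $Q$ via a composition of reflections.

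We then estimate the $\mathscr{A}_{p,q}$ quantity of $V$ on an arbitrary cube $P$, in two cases.

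\emph{Case 1: $\ell(P)\le r$.} Here $P$ meets at most $2^n$ lattice cubes, and $\Phi$ restricted to $P$ folds $P$ into $Q$. We take the cube $\widetilde P\subset Q$ of side $\ell(P)$ obtained by sliding $\Phi(P)$ inside $Q$. In each coordinate, folding an interval of length at most $r$ gives a subinterval of $[0,r]$ of length at most $\ell(P)$, covered at most twice. Hence for nonnegative $F$,
\[
\fint_P F(\Phi(x))\,dx\le 2^n\fint_{\widetilde P}F.
\]
Applying this in both variables to the double integral in \eqref{eq-locApq} gives the bound $2^{n}\cdot 2^{nq/p'}[W]_{\mathscr{A}^{\operatorname{loc}}_{p,q}(r)}=2^{n\gamma}[W]$. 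Here $W(\Phi(x))$ and $W(\Phi(y))$ enter jointly, but the covering multiplicity bound works pointwise in each variable separately, so the $y$-integral is handled inside and then the $x$-integral outside.

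\emph{Case 2: $\ell(P)>r$.} Let $N:=\lceil \ell(P)/r\rceil$. Then $P$ is contained in a union $R$ of lattice cubes forming a cube of side $(N+1)r$, and $|R|\le (N+1)^n r^n\le 2^n|P|$ for $\ell(P)\ge r$ (more carefully, at most $3^n$ in the worst ratio; this is where the constant $3^n$ comes from). Since $V$ pulled back to each lattice cube is $W$ on $Q$ up to a measure-preserving reflection, averages of functions of $V$ over $R$ equal the corresponding averages over $Q$. The key point is that the pair $(x,y)$ ranges over $R\times R$, and $\Phi\times\Phi$ maps $R\times R$ onto $Q\times Q$ uniformly. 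Thus
\[
\fint_R\Bigl[\fint_R\|V^{1/q}(x)V^{-1/q}(y)\|^{p'}dy\Bigr]^{q/p'}dx=\fint_Q\Bigl[\fint_Q\|W^{1/q}(x)W^{-1/q}(y)\|^{p'}dy\Bigr]^{q/p'}dx\le[W]_{\mathscr{A}^{\operatorname{loc}}_{p,q}(r)}.
\]
Enlarging $P$ to $R$ then costs $(|R|/|P|)^{1+q/p'}\le 3^{n\gamma}$, from the outer average and from the inner average raised to the power $q/p'$.

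Combining the two cases gives \eqref{eq-norm-V-W}. The main obstacle is the bookkeeping in Case 1, where $P$ straddles reflection hyperplanes. There we must check that the folded image fits in a single admissible cube of side at most $r$ with multiplicity at most $2^n\le3^n$, so that the local condition applies and the constant does not exceed $3^{n\gamma}$. We expect this to follow coordinatewise, because folding an interval of length at most $r$ at a single point yields a subinterval of its larger half.
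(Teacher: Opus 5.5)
Your proposal is correct and follows the paper's proof essentially step for step. The paper defines $V(x)=W(|x_1|,\dots,|x_n|)$ on $[-r,r]^n$ and extends it $2r$-periodically, which is exactly your $W\circ\Phi$. Its two cases also match yours: a small cube is covered by $2^n$ reflected copies of a same-size subcube $K\subset Q$, giving $2^{n\gamma}$, and a large cube by $(\lceil \ell(P)/r\rceil+1)^n$ copies of $Q$, giving $3^{n\gamma}$.

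The only slip is your interim claim $|R|\le 2^n|P|$ in Case 2, which fails when $\ell(P)$ is just above $r$. The bound you then substitute, $|R|/|P|=((N+1)r/\ell(P))^n<(1+2r/\ell(P))^n\le 3^n$, is correct and is what the argument needs.
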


\begin{proof}
Without loss of generality, we may assume that the lower-left
corner of $Q$ is the origin $\mathbf{0}$; otherwise translate
the coordinate system such that the lower-left corner of $Q$ is the origin.
To begin with, let $J$ be the cube centered at $\mathbf{0}$
with edge length $2r$.
For any $x:=(x_1,\dots,x_n)\in J$,
let $$V(x):=W(|x_1|,\dots,|x_n|).$$
This definition gives a mirror-symmetric extension of
$W$ from $Q$ to the larger cube $J$.
Next, we extend $V$ from $J$ to $\mathbb{R}^n$ periodically.
It immediately follows from the definition of $V$ that
$V=W$ on $Q$. Therefore, to complete the
present proof, it suffices to show that $V\in \mathscr{A}_{p,q}$
and \eqref{eq-norm-V-W} holds. To this end,
for any cube $R$ in $\mathbb{R}^n$ with edge length $t\in(0,\infty)$,
we consider the following two cases for $t$ and $r$.

\emph{Case (1)} $t\in(0,r)$.
Observe that there exists a cube $K\subset Q$ with edge length $t$
such that $R$ is covered by at most $2^n$ copies of
$K$, where each copy is generated by reflections across
the coordinate hyperplanes together with $2r$-periodic
translations along coordinate directions.
Let $E$ be the union of these copies.
From this, the construction of $V$, and changes of variables, we infer that
\begin{align}\label{eq-V-Case1}
&\fint_R\left[\fint_R\left\|V^{\frac{1}{q}}(x)
V^{-\frac{1}{q}}(y)\right\|^{p^{\prime}}
\,dy\right]^{\frac{q}{p^{\prime}}}\,dx\\
&\quad\leq\fint_R\left[\frac{1}{|R|}\int_{E}
\left\|V^{\frac{1}{q}}(x)V^{-\frac{1}{q}}(y)\right\|^{p^{\prime}}
\,dy\right]^{\frac{q}{p^{\prime}}}\,dx\nonumber\\
&\quad\leq\fint_R\left[\frac{2^n}{|R|}\int_{K}
\left\|V^{\frac{1}{q}}(x)W^{-\frac{1}{q}}(y)\right\|^{p^{\prime}}
\,dy\right]^{\frac{q}{p^{\prime}}}\,dx\nonumber\\
&\quad\leq2^{n\frac{q}{p^{\prime}}}\frac{1}{|R|}
\int_{E}\left[\fint_{K}
\left\|V^{\frac{1}{q}}(x)W^{-\frac{1}{q}}(y)\right\|^{p^{\prime}}
\,dy\right]^{\frac{q}{p^{\prime}}}\,dx\nonumber\\
&\quad\leq2^{n\frac{q}{p^{\prime}}}\frac{2^n}{|R|}
\int_{K}\left[\fint_{K}
\left\|W^{\frac{1}{q}}(x)W^{-\frac{1}{q}}(y)\right\|^{p^{\prime}}
\,dy\right]^{\frac{q}{p^{\prime}}}\,dx\nonumber\\
&\quad=2^{n\gamma}\fint_{K}
\left[\fint_{K}\left\|W^{\frac{1}{q}}(x)
W^{-\frac{1}{q}}(y)\right\|^{p^{\prime}}
\,dy\right]^{\frac{q}{p^{\prime}}}\,dx\leq 2^{n\gamma}
\left[W\right]_{\mathscr{A}^{\operatorname{loc}}_{p,q}(r)}.\nonumber
\end{align}

\emph{Case (2)} $t\in[r,\infty)$. In this case,
we can cover $R$ by at most $N:=(\lceil\frac{t}{r}\rceil+1)^n$
translated copies of $Q$.
Using this,  the definition of $V$, and the same calculation
as in Case (1), we conclude that
\begin{align}\label{eq-V-Case2}
&\fint_R\left[\fint_R\left\|V^{\frac{1}{q}}(x)
V^{-\frac{1}{q}}(y)\right\|^{p^{\prime}}\,dy
\right]^{\frac{q}{p^{\prime}}}\,dx\\
&\quad\leq 	\frac{N}{|R|}
\int_Q\left[\frac{N}{|R|}
\int_Q\left\|W^{\frac{1}{q}}(x)W^{-\frac{1}{q}}(y)\right\|^{p^{\prime}}
\,dy\right]^{\frac{q}{p^{\prime}}}\,dx\nonumber\\
&\quad\leq\left[N\left(\frac{r}{t}\right)^n\right]^{\gamma}
\fint_{Q}\left[\fint_{Q}\left\|W^{\frac{1}{q}}(x)
W^{-\frac{1}{q}}(y)\right\|^{p^{\prime}}\,dy
\right]^{\frac{q}{p^{\prime}}}\,dx
\leq 3^{n\gamma}
\left[W\right]_{\mathscr{A}^{\operatorname{loc}}_{p,q}(r)}.\nonumber
\end{align}
Applying \eqref{eq-V-Case1} and \eqref{eq-V-Case2}, we obtain
\begin{align*}
\left[V\right]_{\mathscr{A}_{p,q}}=\sup_{\mathrm{cube}\ R}
\fint_R\left[\fint_R\left\|V^{\frac{1}{q}}(x)
V^{-\frac{1}{q}}(y)\right\|^{p^{\prime}}\,dy\right]^{\frac{q}{p^{\prime}}}\,dx
\leq 3^{n\gamma}
\left[W\right]_{\mathscr{A}^{\operatorname{loc}}_{p,q}(r)},
\end{align*}
which further implies that
$V\in \mathscr{A}_{p,q}$ and \eqref{eq-norm-V-W} holds.
This completes the proof of Theorem \ref{thm-extension}.
\end{proof}

To show the scaling invariance of local matrix weights,
we first establish the following lemma on the
reducing operators generated by local matrix weights.

\begin{lemma}\label{lem:AQ1AQ2}
Let $r\in(0,\infty)$, $1<p\leq q<\infty$, and
$W\in \mathscr{A}^{\operatorname{loc}}_{p,q}(r)$.
Suppose that $\{A_Q\}_{\{\operatorname{cube}\,Q:\ \ell(Q)\leq r\}}$ and
$\{\widetilde{A}_Q\}_{\{\operatorname{cube}\,Q:\ \ell(Q)\leq r\}}$
are sequences of reducing operators, respectively,
of order $q$ for $W$ and of order $p^{\prime}$ for $W^{-\frac{p^{\prime}}{q}}$.
Assume that $c\in(0,1]$. Then, for any cubes $Q_1,Q_2,Q\subset\mathbb{R}^n$
satisfying $\ell(Q)\leq r$, $Q_1,Q_2\subset Q$, and
$\ell(Q_i)\geq c\ell(Q)$ for $i=1,2$,
\begin{equation*}
\left\|A_{Q_1}\widetilde A_{Q_2}\right\|\lesssim
[W]^{\frac{1}{q}}_{\mathscr{A}^{\operatorname{loc}}_{p,q}(r)},
\end{equation*}
where the implicit positive constant
depends only on $c$, $p$, $q$, $n$ and $m$.
\end{lemma}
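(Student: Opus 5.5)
The idea is to compare the reducing operators on the small cubes $Q_1,Q_2$ with those of the big cube $Q$, and then apply Lemma \ref{lem-W-dualW-dkps} on $Q$ itself. This is legitimate because $\ell(Q)\leq r$.

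The key observation is that reducing operators are monotone in the set, up to a volume factor. Fix $\vec z\in\mathbb{C}^m$. Since $Q_1\subset Q$ and $|Q_1|\geq c^n|Q|$, the definition \eqref{eq-reduce} gives
\begin{align*}
\left|A_{Q_1}\vec z\right|\leq\sqrt{m}\left[\fint_{Q_1}\left|W^{\frac1q}(x)\vec z\right|^q\,dx\right]^{\frac1q}
\leq\sqrt{m}\,c^{-\frac nq}\left[\fint_{Q}\left|W^{\frac1q}(x)\vec z\right|^q\,dx\right]^{\frac1q}
\leq\sqrt{m}\,c^{-\frac nq}\left|A_Q\vec z\right|.
\end{align*}
The same argument for $W^{-\frac{p'}{q}}$ at order $p'$ gives $|\widetilde A_{Q_2}\vec z|\leq \sqrt m\,c^{-\frac n{p'}}|\widetilde A_Q\vec z|$ for every $\vec z$.

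Next we turn these vector inequalities into an operator norm bound. Let $A,B,C$ be positive definite (hence self-adjoint) matrices with $|A\vec z|\leq K|C\vec z|$ for all $\vec z$. Then for any matrix $B$,
\begin{align*}
\|AB\|=\|BA\|=\sup_{|\vec u|=1}\left|BA\vec u\right|\ \text{(after taking adjoints)},
\end{align*}
and more directly $\|AB\|=\sup_{|\vec u|=1}|AB\vec u|\leq K\sup_{|\vec u|=1}|CB\vec u|=K\|CB\|$. So the reduction of the left factor needs no adjoint.

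For the right factor, write $\|A_Q\widetilde A_{Q_2}\|=\|\widetilde A_{Q_2}A_Q\|$. This uses $\|M^*\|=\|M\|$ together with the self-adjointness of both factors. Then $\|\widetilde A_{Q_2}A_Q\|=\sup_{|\vec u|=1}|\widetilde A_{Q_2}(A_Q\vec u)|$, which is at most $\sqrt m\,c^{-\frac n{p'}}\|\widetilde A_QA_Q\|=\sqrt m\,c^{-\frac n{p'}}\|A_Q\widetilde A_Q\|$.

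Chaining the two steps gives
\[
\left\|A_{Q_1}\widetilde A_{Q_2}\right\|\leq m\,c^{-\frac nq-\frac n{p'}}\left\|A_Q\widetilde A_Q\right\|.
\]
Finally, Lemma \ref{lem-W-dualW-dkps} bounds $\|A_Q\widetilde A_Q\|\lesssim [W]^{\frac1q}_{\mathscr{A}^{\operatorname{loc}}_{p,q}(r)}$, since $\ell(Q)\leq r$. The local integrability of $W^{-\frac{p'}{q}}$ that this lemma requires comes from the proof of Corollary \ref{cor-Ap-Ap'}.

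There is no serious obstacle here. The only point needing care is the adjoint trick that moves $\widetilde A_{Q_2}$ to the left, where the pointwise domination can be applied. This works because reducing operators are positive definite and hence self-adjoint.
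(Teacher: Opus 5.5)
Your proof is correct and follows the paper's argument essentially step for step. The paper also derives $|A_{Q_1}\vec z|\lesssim|A_Q\vec z|$ from \eqref{eq-reduce} and $|Q_1|\sim|Q|$, upgrades this to $\|A_{Q_1}M\|\lesssim\|A_QM\|$ (and likewise for $\widetilde A$), uses $\|M^*\|=\|M\|$ with self-adjointness to move $\widetilde A_{Q_2}$ to the left, and concludes with Lemma \ref{lem-W-dualW-dkps} on $Q$. Your version only adds the explicit constant $m\,c^{-\frac nq-\frac n{p'}}$; the stray line asserting $\|AB\|=\|BA\|$ for an arbitrary $B$ is false in general and should be deleted, but your argument never uses it.
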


\begin{proof}
By the definition of reducing operators \eqref{eq-reduce}
and the assumptions that $Q_1\subset Q$ and $\ell(Q_1)\sim \ell(Q)$,
we find that, for any vector $e\in\mathbb{C}^m$,
\begin{align}\label{eq:AQ1e<AQe}
\left|A_{Q_1}e\right|
\sim\left[\fint_{Q_1}\left|W^{\frac 1q}(x)e\right|^q\,dx\right]^{\frac1q}
\lesssim\left[\fint_{Q}\left|W^{\frac 1q}(x)e\right|^q\,dx\right]^{\frac1q}
\sim\left|A_{Q} e\right|.
\end{align}
Let $M\in M_m(\mathbb{C})$. Using \eqref{eq:AQ1e<AQe} with $Me$ in place of $e$,
we conclude that
\begin{equation}\label{eq:AQ1M<AQM}
\left\|A_{Q_1}M\right\|
=\sup_{\genfrac{}{}{0pt}{}{e\in\mathbb{C}^m}{|e|\leq 1}}
\left|A_{Q_1} M e\right|
\lesssim \sup_{\genfrac{}{}{0pt}{}{e\in\mathbb{C}^m}{|e|\leq 1}}
\left|A_{Q} M e\right|
=\left\|A_{Q} M\right\|.
\end{equation}
Similarly, we also have
\begin{equation}\label{eq:AQ2M<AQM}
\left\|\widetilde A_{Q_2}M\right\|\lesssim
\left\|\widetilde A_{Q} M\right\|.
\end{equation}
Applying \eqref{eq:AQ1M<AQM}, \eqref{eq:AQ2M<AQM},
the transpose invariance of the standard norm in $\mathbb{C}^m$
(see, for instance, \cite[Theorem 5.6.2(d)]{hj13}),
and Lemma \ref{lem-W-dualW-dkps}, we obtain
\begin{align*}
\left\|A_{Q_1}\widetilde A_{Q_2}\right\|
\lesssim\left\|A_{Q}\widetilde A_{Q_2}\right\|
=\left\|\widetilde A_{Q_2} A_{Q}\right\|
\lesssim\left\|\widetilde A_{Q} A_{Q}\right\|
=\left\|A_{Q}\widetilde A_{Q}\right\|
\lesssim[W]_{\mathscr{A}^{\operatorname{loc}}_{p,q}(r)}^{\frac1q}.
\end{align*}
This completes the proof of Lemma \ref{lem:AQ1AQ2}.
\end{proof}

Note that it immediately follows from \eqref{eq-locApq}
that the constant $[W]_{\mathscr{A}^{\operatorname{loc}}_{p,q}(r)}$
grows as $r$ increases. In the next theorem, we establish a delicate local
lifting property for the local matrix weight class $\mathscr{A}^{\operatorname{loc}}_{p,q}(r)$
to rigorously quantify this growth.

\begin{theorem}\label{thm:Apq(1+eta)}
Let $\eta\in(0,1)$ and
$1<p\leq q<\infty$. Then, for any $r\in(0,\infty)$ and
$W\in \mathscr{A}^{\operatorname{loc}}_{p,q}(r)$,
\begin{align}\label{eq-Apq(1+eta)}
[W]_{\mathscr{A}_{p,q}^{\operatorname{loc}}((1+\eta)r)}
\lesssim[W]_{\mathscr{A}^{\operatorname{loc}}_{p,q}(r)}^2,
\end{align}
where the implicit positive constant depends on $\eta$
but is independent of $r$ and $W$.
\end{theorem}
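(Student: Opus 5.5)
We work with reducing operators, via Lemma \ref{lem-W-dualW-dkps}. It suffices to show that, for any cube $R$ with $r<\ell(R)\le(1+\eta)r$,
\begin{align*}
\left\|A_R\widetilde A_R\right\|\lesssim[W]^{\frac2q}_{\mathscr{A}^{\operatorname{loc}}_{p,q}(r)}.
\end{align*}
Here $A_R$ and $\widetilde A_R$ are the reducing operators of order $q$ for $W$ and of order $p'$ for $W^{-\frac{p'}{q}}$. Cubes with $\ell(R)\le r$ are already controlled by $[W]_{\mathscr{A}^{\operatorname{loc}}_{p,q}(r)}\le[W]^2_{\mathscr{A}^{\operatorname{loc}}_{p,q}(r)}$, using Corollary \ref{coro-[W]geq1}.

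\textbf{Step 1 (splitting).} Fix such an $R$ and cover it by $N=N(\eta,n)$ subcubes $\{P_i\}_{i=1}^N$ of edge length $\ell(R)/2\le r$. Inside $R$, any two of them, $P_i$ and $P_j$, should be contained in a cube $Q_{ij}\subset R$ of edge length $\le r$ together with a comparable common piece. Because $\eta<1$, we have $\ell(R)<2r$. So I would choose a grid of subcubes of edge length $\ell(R)-r'$ with $r'$ slightly below $r$, arranged so that neighbouring cubes $P_i$ and $P_j$ lie in a common cube of edge length $\le r$. For non-neighbouring pairs, I would instead use an intermediate cube $P_k$ with $P_i\cup P_k$ and $P_k\cup P_j$ each contained in a cube of edge length $\le r$. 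With edge lengths comparable to $\eta r$ this may take a chain; the constant then depends on $\eta$, but the chain length must be kept to one intermediate step so that the exponent stays $2$.

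\textbf{Step 2 (comparing reducing operators).} Since the $P_i$ cover $R$ with $|P_i|\sim_\eta|R|$, the definition \eqref{eq-reduce} gives
\begin{align*}
|A_R e|\lesssim\sum_i|A_{P_i}e| \quad\text{and}\quad |\widetilde A_R e|\lesssim\sum_j|\widetilde A_{P_j}e|.
\end{align*}
Arguing as in the proof of Lemma \ref{lem:AQ1AQ2} (operator norms and transposition), this yields
\begin{align*}
\|A_R\widetilde A_R\|\lesssim\sum_{i,j}\|A_{P_i}\widetilde A_{P_j}\|.
\end{align*}
For a pair $(i,j)$ contained in a common cube of edge length $\le r$, Lemma \ref{lem:AQ1AQ2} directly gives $\|A_{P_i}\widetilde A_{P_j}\|\lesssim[W]^{1/q}$. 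For a general pair, insert the intermediate cube $P_k$:
\begin{align*}
\|A_{P_i}\widetilde A_{P_j}\|\le\|A_{P_i}\widetilde A_{P_k}\|\,\|\widetilde A_{P_k}^{-1}A_{P_k}^{-1}\|\,\|A_{P_k}\widetilde A_{P_j}\|.
\end{align*}
The first and third factors are $\lesssim[W]^{1/q}$ by Lemma \ref{lem:AQ1AQ2}. The middle factor is $\|(A_{P_k}\widetilde A_{P_k})^{-1}\|\lesssim1$, by the standard lower bound $|e|\lesssim|A_P\widetilde A_P e|$. That bound follows from Hölder's inequality: the pairing of $W^{1/q}$ with $W^{-1/q}$ gives $|e|^2\le\fint_P|W^{1/q}e||W^{-1/q}e|\le|A_Pe||\widetilde A_Pe|$, and hence it follows from duality of the reducing operators. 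Altogether each pair contributes $\lesssim[W]^{2/q}$, which gives \eqref{eq-Apq(1+eta)}.

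\textbf{Main obstacle.} The hardest part is the geometric Step 1. Every pair of pieces must be connected through a single intermediate piece, each link lying in a cube of edge length $\le r$ and with the pieces of size comparable to $\ell(R)$, so that Lemma \ref{lem:AQ1AQ2} applies with $c=c(\eta)$. In one dimension, take $R=[0,(1+\eta)r]$: the two end pieces $[0,\eta r]$ and $[r,(1+\eta)r]$ are not in a common $r$-interval, but the middle piece $[\eta r/2,r+\eta r/2]$ connects them. In $n$ dimensions I would take products of these three intervals, so that a corner-to-corner connection uses the central cube and each link cube has edge length $\le r$. I would check this tensor construction carefully. The inverse estimate $\|(A_P\widetilde A_P)^{-1}\|\lesssim1$ is routine but also needs verification.
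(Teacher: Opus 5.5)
Your architecture is the paper's: reduce to cubes $R$ with $r<\ell(R)\le(1+\eta)r$, bound $\|A_R\widetilde A_R\|$ by $\max_{i,j}\|A_{P_i}\widetilde A_{P_j}\|$ over a cover, insert one intermediate cube, and apply Lemma \ref{lem:AQ1AQ2} to the two resulting factors. The gap is in the geometric step you identify as the crux: your explicit construction fails.

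The middle piece $[\eta r/2,r+\eta r/2]$ has length $r$. Its union with $[0,\eta r]$ is therefore $[0,(1+\eta/2)r]$, which is longer than $r$, and the same happens on the right. No cube of edge length at most $r$ contains both pieces, so Lemma \ref{lem:AQ1AQ2} applies to neither link. Taking products in $n$ dimensions inherits the same defect.

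The intermediate must be the overlap of the two length-$r$ intervals anchored at the endpoints of $R$. For $R=[0,\ell]$ with $\ell:=\ell(R)$, this is $[\ell-r,r]$. Its length $2r-\ell\ge(1-\eta)r$ is positive and comparable to $r$ exactly because $\eta<1$.

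This is the paper's choice:
\begin{itemize}
\item cover $R$ by its $2^n$ bisection cubes $Q_i$, of edge length $\ell/2\in(r/2,r)$;
\item let $\widetilde Q_i\subset R$ be the corner cube of edge length $r$ containing $Q_i$;
\item set $\widetilde Q:=\bigcap_i\widetilde Q_i$, the concentric cube of edge length $2r-\ell$.
\end{itemize}
Then $Q_i,\widetilde Q\subset\widetilde Q_i$ for every $i$. So a single intermediate serves all pairs, with $c=\min\{\frac12,1-\eta\}$ in Lemma \ref{lem:AQ1AQ2}. The intermediate need not belong to the cover, so your $3^n$ tensor grid is unnecessary. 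That grid would also produce non-cubical boxes, which fall outside the literal scope of Lemma \ref{lem:AQ1AQ2}.

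A smaller point concerns the middle factor. The bound $\|(A_P\widetilde A_P)^{-1}\|\lesssim1$ is true. It is equivalent to the estimate $\|A_{\widetilde Q}^{-1}A_{Q_i}\|\lesssim\|\widetilde A_{\widetilde Q}A_{Q_i}\|$, which the paper imports from \cite[Lemma 2.8]{BCHYY}.

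However, it does not follow from the single-vector inequality $|e|^2\lesssim|A_Pe||\widetilde A_Pe|$. Take $A=\operatorname{diag}(a,a^{-1})$ and $B=UA^{-1}U^*$ with $U$ the rotation by $\pi/4$. Then $|e|^2\le\sqrt2\,|Ae||Be|$ for all $e$, while $\|(AB)^{-1}\|\ge a^2/\sqrt2$.

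Use instead the bilinear bound $|\langle e,v\rangle|\le\fint_P|W^{\frac1q}(x)v||W^{-\frac1q}(x)e|\,dx\lesssim|A_Pv||\widetilde A_Pe|$, which follows from H\"older's inequality and $q'\le p'$. Since $|A_P^{-1}e|=\sup_{v\neq\mathbf 0}|\langle e,v\rangle|/|A_Pv|$, this gives $|A_P^{-1}e|\lesssim|\widetilde A_Pe|$, that is, $\|A_P^{-1}\widetilde A_P^{-1}\|\lesssim1$. Passing to adjoints yields your claim.
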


\begin{proof}
Let $\{A_Q\}_{\{\operatorname{cube}\,Q\}}$ and
$\{\widetilde{A}_Q\}_{\{\operatorname{cube}\,Q\}}$
be sequences of reducing operators, respectively,
of order $q$ for $W$ and of order $p^{\prime}$ for $W^{-\frac{p^{\prime}}{q}}$.
By Lemma \ref{lem-W-dualW-dkps}, we find that

\begin{align}\label{eq-sup-max}
[W]_{\mathscr{A}_{p,q}^{\operatorname{loc}}((1+\eta)r)}^{\frac1q}
\sim\max\left\{\sup_{\genfrac{}{}{0pt}{}{\text{ cube}\,Q}{\ell(Q)\leq r}}
\left\|A_Q\widetilde A_Q\right\|,
\sup_{\genfrac{}{}{0pt}{}{\text{ cube}\,Q}{r<\ell(Q)\leq (1+\eta)r}}
\left\|A_Q\widetilde A_Q\right\|\right\}
=:\max\{\operatorname{I}, \operatorname{II}\}.
\end{align}
Using Lemma \ref{lem-W-dualW-dkps},
we conclude that
\begin{equation*}
\operatorname{I}\sim [W]_{\mathscr{A}_{p,q}^{\operatorname{loc}}(r)}^{\frac1q}.
\end{equation*}
Next, we claim that
\begin{align}\label{eq-claimII}
\operatorname{II}\lesssim[W]_{
\mathscr{A}_{p,q}^{\operatorname{loc}}(r)}^{\frac2q}.
\end{align}
Since $[W]_{\mathscr{A}_{p,q}^{\operatorname{loc}}(r)}\geq 1$ by
Corollary \ref{coro-[W]geq1},
once \eqref{eq-claimII} is proved, then the present theorem
directly follows from \eqref{eq-sup-max} and \eqref{eq-claimII}.

Finally, we show \eqref{eq-claimII}. Let
$Q\subset\mathbb{R}^n$ be a cube with $r<\ell(Q)\leq(1+\eta)r$.
By bisecting each edge of $Q$, we obtain the subcubes
$\{Q_i\}_{i=1}^{2^n}$ of $Q$. Then, for any
$i\in\{1,\dots,2^n\}$,
\begin{equation*}
\ell(Q_i)=\tfrac12\ell(Q)\in\left(\frac12 r,\frac12\left(1+\eta\right)r\right]
\subset\left(\frac12 r,r\right).
\end{equation*}
For any $i\in\{1,\dots,2^n\}$, let $\widetilde{Q}_i$ be a cube
with edge length $\ell(\widetilde{Q}_i)=r$ and
$Q_i\subset \widetilde{Q}_i \subset Q$.
Let $\widetilde{Q}:=\bigcap_{i=1}^{2^n}\widetilde{Q}_i$.
It is easy to find that $\widetilde{Q}$ is
a cube concentric with $Q$ and $\ell(\widetilde{Q})=
2r-\ell(Q)$. Applying the construction of
$\{Q_i\}_{i=1}^{2^n}$ and applying \eqref{eq-reduce},
we conclude that, for any vector $e\in\mathbb{C}^m$,
\begin{equation*}
\left|A_Q e\right|
\sim\left[\fint_Q\left|W^{\frac 1q}(x)e\right|^q\,dx\right]^{\frac1q}
=\left[2^{-n}\sum_{i=1}^{2^n}\fint_{Q_i}\left|W^{\frac 1q}(x)e\right|^q\,dx\right]^{\frac1q}
\sim\left(2^{-n}\sum_{i=1}^{2^n}\left|A_{Q_i} e\right|^q\right)^{\frac1q},
\end{equation*}
which, together with the definition of the operator norm,
further implies that, for any $M\in M_m(\mathbb{C})$,
\begin{align}\label{eq:AQM<sum}
\left\|A_Q M\right\|
=\sup_{\genfrac{}{}{0pt}{}{e\in\mathbb{C}^m}{|e|\leq 1}}
\left|A_Q Me\right|
&\lesssim\left(2^{-n}\sum_{i=1}^{2^n} \sup_{\genfrac{}{}{0pt}{}{e\in\mathbb{C}^m}{|e|\leq 1}}
\left|A_{Q_i}Me\right|^q\right)^{\frac1q}\\
&=\left(2^{-n}\sum_{i=1}^{2^n}\left\|A_{Q_i} M\right\|^q\right)^{\frac1q}
\leq\max_{1\leq i\leq 2^n}\left\|A_{Q_i} M\right\|.\nonumber
\end{align}
Similarly, we also have, for any $M\in M_m(\mathbb{C})$,
\begin{align}\label{eq:tildeAQM<sum}
\left\|\widetilde A_Q M\right\|
\lesssim\max_{1\leq j\leq 2^n}\left\|\widetilde A_{Q_j}M\right\|.
\end{align}
Repeatedly using \eqref{eq:AQM<sum} and \eqref{eq:tildeAQM<sum},
we conclude that
\begin{align}\label{eq:AQtildeAQ<}
\left\|A_Q\widetilde A_Q\right\|
&\lesssim\max_{1\leq i\leq 2^n}\left\|A_{Q_i}\widetilde A_Q\right\|
=\max_{1\leq i\leq 2^n}\left\|\widetilde A_QA_{Q_i}\right\|\\
&\lesssim\max_{1\leq i,j\leq 2^n}\left\|\widetilde A_{Q_j}A_{Q_i}\right\|
=\max_{1\leq i,j\leq 2^n}\left\|A_{Q_i}\widetilde A_{Q_j}\right\|.\nonumber
\end{align}
For any $i,j\in\{1,\dots,2^n\}$, we continue with
\begin{align}\label{eq:AQiAQj<}
\left\|A_{Q_i}\widetilde A_{Q_j}\right\|
&=\left\|A_{Q_i}A_{\widetilde{Q}}^{-1}A_{\widetilde{Q}}\widetilde A_{Q_j}\right\|\\
&\lesssim\left\|A_{Q_i}A_{\widetilde{Q}}^{-1}\right\|
\left\|A_{\widetilde{Q}}\widetilde A_{Q_j}\right\|
=\left\|A_{\widetilde{Q}}^{-1}A_{Q_i}\right\|
\left\|A_{\widetilde{Q}}\widetilde A_{Q_j}\right\|.\nonumber
\end{align}
By \cite[Lemma 2.8]{BCHYY}, we find that
\begin{align*}
\left\|A_{\widetilde{Q}}^{-1}A_{Q_i}\right\|
\lesssim\left\|\widetilde A_{\widetilde{Q}}A_{Q_i}\right\|.
\end{align*}
Substituting this back to \eqref{eq:AQiAQj<}, we obtain
\begin{align}\label{eq-AQiAQj}
\left\|A_{Q_i}\widetilde A_{Q_j}\right\|
\lesssim\left\|\widetilde A_{\widetilde{Q}}A_{Q_i}\right\|
\left\|A_{\widetilde{Q}}\widetilde A_{Q_j}\right\|
=\left\|A_{Q_i} \widetilde A_{\widetilde{Q}}\right\|
\left\|A_{\widetilde{Q}}\widetilde A_{Q_j}\right\|.
\end{align}
Note that both factors on the right-hand side of
\eqref{eq-AQiAQj}
have the same form $\left\|A_{Q_1}\widetilde A_{Q_2}\right\|$
as considered in Lemma \ref{lem:AQ1AQ2}.
More precisely, in the first factor, $Q_1:=Q_i$ and
$Q_2:=\widetilde{Q}$ of lengths $\ell(Q_i)>\frac12r$ and
$\ell(\widetilde{Q})=2r-\ell(Q)
\geq(1-\eta)r$ are both contained in $Q:=\widetilde{Q}_i$ of length $r$.
Moreover, in the second factor, $Q_1:=\widetilde{Q}$ and $Q_2:=Q_j$ of
lengths $\ell(\widetilde{Q})\geq(1-\eta)r$ and
$\ell(Q_j)>\frac12r$ are both contained in $Q:=\widetilde{Q}_j$ of length $r$.
Thus, it follows from Lemma \ref{lem:AQ1AQ2}
that both factors on the right-hand side of
\eqref{eq-AQiAQj} are bounded by the value
$[W]_{\mathscr{A}^{\operatorname{loc}}_{p,q}(r)}^{\frac 1q}$
(with implicit positive constants depending on $\eta$).
As the product of two such factors, we obtain
\begin{equation*}
\left\|A_{Q_i}\widetilde A_{Q_j}\right\|
\lesssim [W]_{\mathscr{A}^{\operatorname{loc}}_{p,q}(r)}^{\frac2q},
\end{equation*}
which, together with \eqref{eq:AQtildeAQ<},
further implies \eqref{eq-claimII}. This completes the proof
of the claim \eqref{eq-claimII} and hence Theorem \ref{thm:Apq(1+eta)}.
\end{proof}

\begin{remark}
Let the notation be the same as in Theorem \ref{thm:Apq(1+eta)}.
As a direct corollary, we find that the class of weights
$\mathscr{A}_{p,q}^{\operatorname{loc}}(r)$ is indeed independent of the
choice of $r\in(0,\infty)$. Thus, in Theorems \ref{thm-bound-maxloc}
and \ref{thm-bound-fracintloc} below, even if we only assume that
$W\in\mathscr{A}_{p,q}^{\rm loc}(r)$, then $W$ also belongs to
$\mathscr{A}_{p,q}^{\rm loc}(r+\varepsilon)$
for any $\varepsilon\in(0,\infty)$.
\end{remark}

Finally, we present a one-dimensional scalar example to
show that the exponent $2$ in \eqref{eq-Apq(1+eta)} is optimal.
Since Theorem \ref{thm:Apq(1+eta)}, particularly this exponent,
plays a crucial role in establishing the sharp bounds for
(fractional) maximal operators [see \eqref{eq-bound-maxloc-1} and its proof],
this optimality implies that our current approach
cannot be used to relax the restriction
on $p$ and $q$ in \eqref{eq-bound-maxloc-1}.

\begin{example}\label{exam-sharpness-power-2}
Let $M\in[1,\infty)$, $\eta\in(0,1)$, and $1<p\leq q<\infty$.
For any $x\in\mathbb{R}$, let
\begin{align*}
w_M(x) :=
\begin{cases}
M &\displaystyle \text{if } x \in \left[-\frac{\eta}{2}r, 0\right]=:I_1,\\
M^{-1} &\displaystyle \text{if } x \in
\left[r, \left(1+\frac{\eta}{2}\right)r\right]=: I_2,\\
1 & \text{otherwise}.
\end{cases}
\end{align*}
Next, we prove that $[w_M]_{A^{\operatorname{loc}}_{p,q}(r)}\leq M$
and $[w_M]_{A^{\operatorname{loc}}_{p,q}((1+\eta)r)}\gtrsim M^2$,
where the implicit positive constant is independent of $M$.

To this end, for any interval $I\subset\mathbb{R}$, let
\begin{align*}
\mathscr{N}(I):=\fint_I w_M(x)\,dx\left\{\fint_I \left[w_M(x)\right]^{-\frac{p^{\prime}}{q}}
\,dx\right\}^{\frac{q}{p^{\prime}}}.
\end{align*}

To prove $[w_M]_{A^{\operatorname{loc}}_{p,q}(r)}\leq M$,
suppose that $I$ is an interval
in $\mathbb{R}$ with $\ell(I)\in(0,r]$.
If $|I\cap I_1|=0$ and $|I\cap I_2|=0$, then $w$ equals 1 a.e. on $I$
and hence $\mathscr{N}(I)=1\leq M$. If $|I\cap I_1|\neq0$, it follows from
the definition of $w_M$ that $|I\cap I_2|=0$. Thus,
\begin{align*}
\mathscr{N}(I)\leq\fint_I M\,dx\left[\fint_I 1
\,dx\right]^{\frac{q}{p^{\prime}}}=M.
\end{align*}
Applying the same argument as above, we also obtain, if $|I\cap I_2|\neq0$,
then $\mathscr{N}(I)\leq M$. Using these three cases and
the definition of $[w_M]_{A^{\operatorname{loc}}_{p,q}(r)}$,
we conclude that
\begin{align*}
\left[w_M\right]_{A^{\operatorname{loc}}_{p,q}(r)}=
\sup_{\genfrac{}{}{0pt}{}{\operatorname{interval}
\ I\subset\mathbb{R}}{\ell(I)\leq r}}
\mathscr{N}(I)\leq M.
\end{align*}

Finally, we show $[w_M]_{A^{\operatorname{loc}}_{p,q}((1+\eta)r)}\gtrsim M^2$.
Let $I_0:=[-\frac{\eta}{2}r, (1+\frac{\eta}{2})r]$. By the definition of
$w_M$, we find that
\begin{align*}
\mathscr{N}\left(I_0\right)&\geq\frac{1}{|I_0|}\int_{I_1} w_M(x)\,dx
\left\{\frac{1}{|I_0|}\int_{I_2} \left[w_M(x)\right]^{-\frac{p^{\prime}}{q}}
\,dx\right\}^{\frac{q}{p^{\prime}}}\\
&=\left[\frac{\eta}{(1+\eta)2}\right]^{1+\frac{q}{p^{\prime}}} M^2,
\end{align*}
which, combined with the definition of
$[w_M]_{A^{\operatorname{loc}}_{p,q}((1+\eta)r)}$,
further implies that
\begin{align*}
[w_M]_{A^{\operatorname{loc}}_{p,q}((1+\eta)r)}=
\sup_{\genfrac{}{}{0pt}{}{\operatorname{interval}
\ I\subset\mathbb{R}}{\ell(I)\leq (1+\eta)r}}
\mathscr{N}(I)\geq \mathscr{N}(I_0)\gtrsim M^2.
\end{align*}

Moreover, if the exponent 2 in \eqref{eq-Apq(1+eta)} could be improved
to some exponent $s\in[1,2)$, applying \eqref{eq-Apq(1+eta)} to $w_M$,
we obtain $M^2 \lesssim M^s$. Letting $M \to \infty$ immediately
yields a contradiction. Thus, the exponent 2 in \eqref{eq-Apq(1+eta)} is optimal.
\end{example}

\section{Quantitative Weighted Inequalities}\label{s3}

This section, consisting of four subsections, is devoted
to establishing the quantitative boundedness of various operators
on local matrix-weighted Lebesgue spaces. More precisely,
we obtain the quantitative weighted boundedness of
local fractional maximal operators, local fractional integral operators,
local Haar square functions, and Calder\'{o}n--Zygmund operators
with exponential decay, respectively, in
Subsections \ref{s3-1}, \ref{s3-2}, \ref{s3-3}, and \ref{s3-4}.
The main idea of our proofs is to use the extension property
from Theorem \ref{thm-extension} to establish the relationship between
local and global matrix-weighted boundedness,
combined with the sharp scale lifting property of local
matrix weights established in Theorem \ref{thm:Apq(1+eta)}.

\subsection{Local Fractional Maximal Operators}\label{s3-1}

This subsection is devoted to establishing the boundedness of
local fractional maximal operators on
local matrix-weighted Lebesgue spaces.
To this end, we first recall the definition of
matrix-weighted fractional maximal operators.
Let $\alpha\in[0,n)$, $q\in(1,\infty)$,
and $W$ be a matrix weight.
The \emph{matrix-weighted fractional maximal operator}
$M_{W,\alpha}$ is defined by setting,
for any measurable vector-valued function $\vec{f}$
on $\mathbb{R}^n$ and for any $x\in\mathbb{R}^n$,
\begin{align}\label{eq-def-fracmax}
M_{W,\alpha}\vec{f}(x):=\sup_{Q \ni x}
\frac{1}{|Q|^{1-\frac{\alpha}{n}}}
\int_Q\left|W^{\frac{1}{q}}(x)
W^{-\frac{1}{q}}(y) \vec{f}(y)\right|\,dy,
\end{align}
where the supremum is taken over all cubes $Q\subset\mathbb{R}^n$
that contain $x$ (see \cite[p.\,1331]{im19}).
If $\alpha=0$, the maximal operator $M_{W,0}=:M_{W}$
is exactly the Christ--Goldberg maximal function in \cite{g03}.
The following quantitative boundedness of $M_{W,\alpha}$ was established
by Isralowitz and Moen in \cite[Theorem 1.3]{im19}.
\begin{theorem}\label{thm-bound-max}
Let $\alpha\in[0,n)$, $p\in(1,\frac{n}{\alpha})$, and
$q\in(1,\infty)$ satisfy $\frac{1}{q}=
\frac{1}{p}-\frac{\alpha}{n}$. Then there exists
a positive constant $C$ such that, for any $W\in\mathscr{A}_{p,q}$,
\begin{align*}
\left\|M_{W,\alpha}\right\|_{L^p\to L^q}\leq C[W]^{\frac{p^{\prime}}{q}
(1-\frac{\alpha}{n})}_{\mathscr{A}_{p,q}},
\end{align*}
where $C$ is independent of $W$, and the exponent
$\frac{p^{\prime}}{q}(1-\frac{\alpha}{n})$ is sharp.
\end{theorem}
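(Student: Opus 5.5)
The plan is to reproduce the argument of Isralowitz and Moen \cite[Theorem 1.3]{im19}: reduce $M_{W,\alpha}$ to a sparse-type dyadic operator whose averages are measured with reducing operators, and then apply the scalar sharp bound for the fractional maximal function with respect to the scalar weights $w_{\vec z}$.

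\emph{Step 1 (dyadic reduction).} By the three-lattice lemma it suffices to bound the dyadic version $M^{\mathcal D}_{W,\alpha}$ over a fixed dyadic grid $\mathcal D$. For $Q\in\mathcal D$, let $A_Q$ be the reducing operator of order $q$ for $W$ and $\widetilde A_Q$ the reducing operator of order $p'$ for $W^{-p'/q}$. For $x\in Q$ we write
\[
\bigl|W^{\frac1q}(x)W^{-\frac1q}(y)\vec f(y)\bigr|\le \bigl\|W^{\frac1q}(x)A_Q^{-1}\bigr\|\,\bigl\|A_Q\widetilde A_Q\bigr\|\,\bigl|\widetilde A_Q^{-1}W^{-\frac1q}(y)\vec f(y)\bigr|.
\]
This splits $M^{\mathcal D}_{W,\alpha}\vec f$ into a factor $\|W^{1/q}(x)A_Q^{-1}\|$ depending only on $x$, the constant $\|A_Q\widetilde A_Q\|\lesssim[W]^{1/q}_{\mathscr A_{p,q}}$ given by the global analogue of Lemma \ref{lem-W-dualW-dkps}, and a fractional average of $|\widetilde A_Q^{-1}W^{-1/q}\vec f|$.

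\emph{Step 2 (removing the $x$-dependence).} This is the main obstacle. Using that $\|W^{1/q}(x)A_Q^{-1}\|^q$ has bounded averages on $Q$, together with a reverse Hölder inequality for the scalar weights $x\mapsto\|W^{1/q}(x)A_Q^{-1}\|^q$, we plan a stopping-time (sparse) argument in which the maximal operator is dominated by $\sum_{Q\in\mathcal S}$ of fractional averages multiplied by $\mathbf 1_Q(x)\|W^{1/q}(x)A_Q^{-1}\|$. The sharp exponent has to come from a careful choice of this stopping family.

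\emph{Step 3 (duality).} Next we test against $g\in L^{q'}$ and use the equivalence $|\widetilde A_Q\vec e|\sim(\fint_Q|W^{-1/q}\vec e|^{p'})^{1/p'}$. This reduces the bound to the scalar sharp estimate \eqref{eq-quantativebound-fracmax} applied with the weight $W^{-p'/q}$, which lies in $\mathscr A_{q',p'}$ by the global analogue of Corollary \ref{cor-Ap-Ap'}.

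The exponent count goes as follows: the factor $[W]^{1/q}$ from $\|A_Q\widetilde A_Q\|$, combined with the Buckley-type gain from the sparse summation, yields $[W]^{\frac{p'}{q}(1-\frac\alpha n)}_{\mathscr A_{p,q}}$. Sharpness then follows from the scalar case $m=1$, where \eqref{eq-quantativebound-fracmax} is sharp by \cite{lmpt10}.
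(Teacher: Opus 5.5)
The paper does not prove this statement; it quotes it from \cite[Theorem 1.3]{im19}. Judged on its own, your proposal has a genuine gap exactly where you locate ``the main obstacle'': Step~2 is a plan rather than an argument, and Steps~1--3 as written do not produce the exponent.

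\textbf{The gap.} After Step~1 you are left with $\sup_{Q\ni x}\|W^{\frac1q}(x)A_Q^{-1}\|\,|Q|^{\frac{\alpha}{n}}\fint_Q|\widetilde A_Q^{-1}W^{-\frac1q}\vec f|$. Here both the $x$-factor and the average carry the $Q$-dependent matrices $A_Q^{-1}$ and $\widetilde A_Q^{-1}$, and no scalar stopping time handles this. For a non-stopping $Q$ with stopping parent $R$ you would need $\|W^{\frac1q}(x)A_Q^{-1}\|\lesssim\|W^{\frac1q}(x)A_R^{-1}\|$, that is, a uniform bound on $\|A_RA_Q^{-1}\|$. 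Only $\|A_QA_R^{-1}\|\lesssim(|R|/|Q|)^{1/q}$ holds, and doubling in the other direction degenerates as $|R|/|Q|\to\infty$ along the chain. Likewise, the averages $\fint_Q|\widetilde A_Q^{-1}W^{-\frac1q}\vec f|$ are not monotone along chains.

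Even if you obtained a domination by $\sum_{Q\in\mathcal S}\mathbf 1_Q(x)\|W^{\frac1q}(x)A_Q^{-1}\|(\cdots)$, estimating it as a sparse operator is the route that yields fractional-integral exponents (compare \eqref{eq-quantativebound-fracint} and Theorem~\ref{thm-bound-fracint}). Already in the scalar case these exceed $(1-\frac{\alpha}{n})\frac{p'}{q}$ when $q>p'$.

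Step~3 is not valid either. The estimate \eqref{eq-quantativebound-fracmax} is scalar and cannot be ``applied with the weight $W^{-p'/q}$'', which is matrix valued. The scalar weights that actually occur, $y\mapsto|W^{-\frac1q}(y)\widetilde A_Q^{-1}e_i|^{p'}$, change with $Q$, so no single scalar operator bound applies, and your final exponent count is asserted rather than derived.

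\textbf{A repair.} Write $[W]:=[W]_{\mathscr A_{p,q}}$, $\vec g:=W^{-\frac1q}\vec f$, and $\langle\!\langle\vec g\rangle\!\rangle_Q:=\{\fint_Q\varphi\vec g:\ \|\varphi\|_{L^\infty}\le1\}$.
\begin{enumerate}
\item Run the stopping time on these convex bodies: stop at maximal $Q\subset R$ with $\langle\!\langle\vec g\rangle\!\rangle_Q\not\subset C\langle\!\langle\vec g\rangle\!\rangle_R$; sparseness follows from John's ellipsoid.
\item Since $\fint_Q|B\vec g|\le m\sup_{\vec v\in\langle\!\langle\vec g\rangle\!\rangle_Q}|B\vec v|$ for every matrix $B$, you get $M^{\mathcal D}_{W,\alpha}\vec f(x)\lesssim\sup_{R\in\mathcal S,\,R\ni x}|R|^{\frac{\alpha}{n}}\sup_{\vec v\in\langle\!\langle\vec g\rangle\!\rangle_R}|W^{\frac1q}(x)\vec v|$. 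The matrix structure is now frozen at sparse cubes.
\item Bound this supremum by the $\ell^q$-sum over $R\ni x$, not the $\ell^1$-sum. Integrate in $x$ over each $R$ using John's ellipsoid and \eqref{eq-reduce} to obtain $\|M^{\mathcal D}_{W,\alpha}\vec f\|_{L^q}^q\lesssim\sum_{R\in\mathcal S}|R|^{1+\frac{\alpha q}{n}}\bigl(\fint_R|A_RW^{-\frac1q}\vec f|\bigr)^q$.
\item Only now use your factorization, with $\|A_R\widetilde A_R\|\lesssim[W]^{1/q}$. Apply the sharp reverse H\"older inequality to the $y$-side weights $|W^{-\frac1q}\widetilde A_R^{-1}e_i|^{p'}$, not the $x$-side weights of your Step~2. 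These lie uniformly in $A_{1+p'/q}$ with constant $\lesssim[W]^{p'/q}$, so $\epsilon\sim[W]^{-p'/q}$, and their averages on $R$ are at most $1$. This gives $\fint_R|A_RW^{-\frac1q}\vec f|\lesssim[W]^{\frac1q}\bigl(\fint_R|\vec f|^s\bigr)^{1/s}$ with $p-s\sim\epsilon$.
\item Use sparseness together with $\|M_{\alpha s}\|_{L^{p/s}\to L^{q/s}}\lesssim((p/s)')^{p/q}$. The latter follows from the pointwise bound $M_\beta h\le(Mh)^{1-\beta a/n}\|h\|_{L^a}^{\beta a/n}$ with $a:=p/s$ and $\beta:=\alpha s$.
\end{enumerate}
Together these give $\|M^{\mathcal D}_{W,\alpha}\|_{L^p\to L^q}\lesssim([W]\epsilon^{-1})^{1/q}=[W]^{\frac1q(1+\frac{p'}{q})}=[W]^{\frac{p'}{q}(1-\frac{\alpha}{n})}$. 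Your three-lattice reduction and the sharpness argument via $m=1$ are fine.
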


Let $r\in(0,\infty)$.
If we take the supremum in \eqref{eq-def-fracmax} over all
cubes in $\mathbb{R}^n$ with edge length less than
or equal to $r$, then we obtain the \emph{matrix-weighted
local fractional maximal operators} $M_{W,\alpha,r}$.
In the following theorem, we establish the quantitative boundedness
of $M_{W,\alpha,r}$ corresponding to Theorem \ref{thm-bound-max}.

\begin{theorem}\label{thm-bound-maxloc}
Let $r\in(0,\infty)$. Suppose that $\alpha\in[0,n)$,
$p\in(1,\frac{n}{\alpha})$,
and $q\in(1,\infty)$ satisfy $\frac{1}{q}=\frac{1}{p}-\frac{\alpha}{n}$.
Then, for any $\varepsilon\in(0, r]$ and
$W\in\mathscr{A}^{\operatorname{loc}}_{p,q}(r)$,
\begin{align}\label{eq-bound-maxloc}
\left\|M_{W,\alpha,r}\right\|_{L^p\to L^q}\lesssim
[W]^{\frac{p^{\prime}}{q}(1-\frac{\alpha}{n})}
_{\mathscr{A}^{\operatorname{loc}}_{p,q}(r)}
+\left(1+\frac{r}{\varepsilon}\right)^{\frac{n}{q}}
[W]^{\frac{1}{q}}
_{\mathscr{A}^{\operatorname{loc}}_{p,q}(r+\varepsilon)},
\end{align}
where the implicit positive constant is independent
of $r$, $\varepsilon$, and $W$. Moreover,
\begin{align}\label{eq-bound-maxloc-2}
\left\|M_{W,\alpha,r}\right\|_{L^p\to L^q}\lesssim
[W]^{\max\{\frac{p^{\prime}}{q}(1-\frac{\alpha}{n}), \frac{2}{q}\}}
_{\mathscr{A}^{\operatorname{loc}}_{p,q}(r)},
\end{align}
where the implicit positive constant is independent of $r$ and $W$.
If $q\leq p'$, the bound in \eqref{eq-bound-maxloc-2} can
be improved to
\begin{align}\label{eq-bound-maxloc-1}
\left\|M_{W,\alpha,r}\right\|_{L^p\to L^q}\lesssim
[W]^{\frac{p^{\prime}}{q}(1-\frac{\alpha}{n})}
_{\mathscr{A}^{\operatorname{loc}}_{p,q}(r)},
\end{align}
where the implicit positive constant is independent of
$r$ and $W$.
\end{theorem}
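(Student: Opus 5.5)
Our plan is to localize: decompose $\mathbb{R}^n$ into a grid of cubes of side comparable to $r$, control $M_{W,\alpha,r}$ on each piece by a global operator attached to an extended weight (Theorem \ref{thm-extension}), and then sum. Concretely, fix $\varepsilon\in(0,r]$ and tile $\mathbb{R}^n$ by the cubes $\{P_k\}_{k\in\mathbb{Z}^n}$ of side $\varepsilon$. For $x\in P_k$, any cube $R\ni x$ with $\ell(R)\le r$ lies in the concentric enlargement $P_k^*$ of $P_k$ with side $\varepsilon+2r$. The enlargements have bounded overlap, with at most $(1+2r/\varepsilon)^n\lesssim(1+r/\varepsilon)^n$ of them containing any point. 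We split each such $R$ into two types according to size and treat them separately.

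\emph{Small cubes.} Suppose $R\ni x$ with $\ell(R)\le r$ is contained in a cube $Q$ of side exactly $r$ that also contains $x$; for instance, one may take $Q$ from a fixed family of shifted cubes of side $r$ covering $P_k$. On such a $Q$ we apply Theorem \ref{thm-extension} to obtain $V_Q\in\mathscr{A}_{p,q}$ with $V_Q=W$ on $Q$ and $[V_Q]_{\mathscr{A}_{p,q}}\lesssim[W]_{\mathscr{A}^{\rm loc}_{p,q}(r)}$. Since $x,y\in Q$ gives $W^{1/q}(x)W^{-1/q}(y)=V_Q^{1/q}(x)V_Q^{-1/q}(y)$, the average over $R$ is dominated by $M_{V_Q,\alpha}(\vec f\mathbf 1_Q)(x)$. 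Theorem \ref{thm-bound-max} then bounds its $L^q(Q)$ norm by $[W]^{\frac{p'}{q}(1-\frac\alpha n)}_{\mathscr{A}^{\rm loc}_{p,q}(r)}\|\vec f\mathbf 1_Q\|_{L^p}$. Because $q\ge p$ and the cubes $Q$ have bounded overlap, summing $\|\cdot\|_{L^q}^q$ over a lattice of such cubes gives $\sum_Q\|\vec f\mathbf 1_Q\|_{L^p}^q\le(\sum_Q\|\vec f\mathbf 1_Q\|_{L^p}^p)^{q/p}\lesssim\|\vec f\|_{L^p}^q$.

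The difficulty is that a cube $R$ of side close to $r$ containing $x$ need not sit inside a fixed cube of side $r$ from a countable lattice. The alternative is to treat it via the $\varepsilon$-enlargement, which is where the second term of \eqref{eq-bound-maxloc} comes from.

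\emph{Large cubes.} For the remaining cubes, of side between roughly $\varepsilon$ (or $r/2$) and $r$, containing $x\in P_k$, we have $R\subset Q_k$, the cube of side $r+\varepsilon$ concentric with $P_k$. (If needed, take $P_k$ of side $\varepsilon$ and use cubes of side at least $\varepsilon$ in this case, so that $R$ cannot be swallowed by the small-cube case.) Here we do not use a maximal theorem. Writing $\mathcal A_{k}$ and $\widetilde{\mathcal A}_k$ for the reducing operators on $Q_k$, Hölder gives
\[
\frac{1}{|R|^{1-\frac\alpha n}}\int_R|W^{\frac1q}(x)W^{-\frac1q}(y)\vec f(y)|\,dy\lesssim |R|^{\frac\alpha n-1}|Q_k|^{\frac1{p'}}\Big(\fint_{Q_k}\|W^{\frac1q}(x)W^{-\frac1q}(y)\|^{p'}dy\Big)^{\frac1{p'}}\|\vec f\mathbf 1_{Q_k}\|_{L^p}.
\]
Taking the $L^q(P_k)$ norm in $x$, and using $\ell(R)\gtrsim\varepsilon$ together with $\frac\alpha n-1+\frac1{p'}=-\frac1q$, we get a bound of the form $(\ell(Q_k)/\varepsilon)^{n/q}[W]^{1/q}_{\mathscr{A}^{\rm loc}_{p,q}(r+\varepsilon)}\|\vec f\mathbf 1_{Q_k}\|_{L^p}$. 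Summing in $k$ using the bounded overlap (the overlap constant is absorbed by $p\le q$ and the tiling) yields the second term. Tracking the powers of $(1+r/\varepsilon)$ so that exactly the exponent $n/q$ appears is the main bookkeeping obstacle, and we would choose the tiling scale to make it come out.

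\emph{Corollaries.} Setting $\varepsilon=r$ (or $\varepsilon=\eta r$) and applying Theorem \ref{thm:Apq(1+eta)} gives $[W]_{\mathscr{A}^{\rm loc}_{p,q}(2r)}\lesssim[W]^2_{\mathscr{A}^{\rm loc}_{p,q}(r)}$ after iterating a bounded number of times with $\eta=1/2$; this only yields exponent $4$. To get exactly exponent $2$, take $\varepsilon=\eta r$ with a fixed $\eta\in(0,1)$, so that a single application of the theorem suffices. This turns \eqref{eq-bound-maxloc} into \eqref{eq-bound-maxloc-2}, with the maximum of the two exponents since $[W]\ge1$ by Corollary \ref{coro-[W]geq1}. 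Finally, if $q\le p'$, then $\frac2q\le\frac{p'}{q}\cdot\frac{2}{p'}$. Moreover $1-\frac\alpha n=\frac1{p'}+\frac1q$, so $\frac{p'}{q}(1-\frac\alpha n)=\frac1q+\frac{p'}{q^2}\ge\frac2q$ exactly when $p'\ge q$. Hence the maximum is the first exponent, which gives \eqref{eq-bound-maxloc-1}.
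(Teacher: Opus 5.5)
Your small-cube step and your passage from \eqref{eq-bound-maxloc} to \eqref{eq-bound-maxloc-2} and \eqref{eq-bound-maxloc-1} are fine and agree with the paper. For the small cubes, take the threshold to be $\ell(R)\le\frac r2$, not $\varepsilon$, so that each such $R$ lies in one of at most $2^n$ lattice cubes of side $r$ through $x$, uniformly in $\varepsilon$.

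The gap is in the large-cube step, and that is where \eqref{eq-bound-maxloc} is actually decided.

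First, the containment $R\subset Q_k$ is false. If $x$ is the lower-left corner of $P_k$ and $R=x+[-r,0]^n$, then $R$ sticks out of the cube of side $r+\varepsilon$ concentric with $P_k$ by $\frac r2$ in every coordinate. The union of all cubes of side $r$ meeting $P_k$ is your $P_k^*$, so any single cube attached to the cell containing $x$ must have side at least $2r+\varepsilon$. That produces $[W]_{\mathscr{A}^{\rm loc}_{p,q}(2r+\varepsilon)}$. Theorem \ref{thm:Apq(1+eta)} lifts only by a factor $1+\eta<2$, so it would have to be iterated, and the exponent $\frac2q$ is lost.

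Second, the power of $1+\frac r\varepsilon$ does not come out as you claim. With $\ell(R)\gtrsim\varepsilon$, your H\"older bound actually carries the factor $(|Q_k|/|R|)^{1-\frac\alpha n}\lesssim(\frac{r+\varepsilon}{\varepsilon})^{n-\alpha}$, not $(\frac{r+\varepsilon}{\varepsilon})^{n/q}$. Moreover, summing over overlapping cubes via $\ell^p\hookrightarrow\ell^q$ gives $\sum_k\|\vec f\mathbf 1_{Q_k}\|_{L^p}^q\le N^{q/p}\|\vec f\|_{L^p}^q$. That is a factor $N^{1/p}$, which is strictly worse than $N^{1/q}$ when $\alpha>0$; the overlap is not ``absorbed by $p\le q$''.

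The missing idea is to attach the enlarged cubes to the position of $R$ rather than of $x$, and to use disjointness before summing. Let $\mathscr R$ be the family of cubes of side $r+\varepsilon$ with lower-left corners in $\varepsilon\mathbb Z^n$.
\begin{itemize}
\item Every cube $R$ of side at most $r$ lies in the member of $\mathscr R$ whose lower-left corner is obtained by rounding the lower-left corner of $R$ down to $\varepsilon\mathbb Z^n$. When $\ell(R)>\frac r2$, this member has measure comparable to $|R|$.
\item Every point lies in at most $N\sim(1+\frac r\varepsilon)^n$ members of $\mathscr R$.
\item Split $\mathscr R$ into $N$ subfamilies $\mathscr R_j$ of pairwise disjoint cubes. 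Bound the supremum at $x$ by the $\ell^q$-sum over $j$ of the fractional averages over the (at most one) cube of $\mathscr R_j$ containing $x$.
\item For a single disjoint subfamily, H\"older, the identity $\frac\alpha nq+1=\frac qp$, and $\ell^p\hookrightarrow\ell^q$ give the $L^p\to L^q$ bound $[W]^{1/q}_{\mathscr{A}^{\rm loc}_{p,q}(r+\varepsilon)}$ with no loss.
\item The $N$ subfamilies then cost exactly $N^{1/q}\sim(1+\frac r\varepsilon)^{n/q}$, which is the second term of \eqref{eq-bound-maxloc}.
\end{itemize}
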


\begin{proof}
From the definition of $M_{W,\alpha,r}$, we first deduce that,
for any measurable vector-valued function
$\vec{f}$ on $\mathbb{R}^n$ and for any $x\in\mathbb{R}^n$,
\begin{align*}
M_{W,\alpha,r}\vec{f}(x)
&=\sup_{\genfrac{}{}{0pt}{}{Q\owns x}{\ell(Q)\leq r}}
\frac{1}{|Q|^{1-\frac{\alpha}{n}}}\int_Q\left|W^{\frac{1}{q}}(x)
W^{-\frac{1}{q}}(y) \vec{f}(y)\right|\, dy\\
&\leq\sup_{\genfrac{}{}{0pt}{}{Q\owns x}{\ell(Q)\leq r/2}}
\frac{1}{|Q|^{1-\frac{\alpha}{n}}}\int_Q\left|W^{\frac{1}{q}}(x)
W^{-\frac{1}{q}}(y) \vec{f}(y)\right|\, dy\\
&\quad+\sup_{\genfrac{}{}{0pt}{}{Q\owns x}{\ell(Q)\in(r/2,r]}}\cdots\\
&=:\operatorname{I}(x)+\operatorname{II}(x),
\end{align*}
where $\operatorname{I}(x)=M_{W,\alpha,\frac{r}{2}}\vec{f}(x)$.
On the other hand, if $Q\subset\mathbb{R}^n$ is a cube with
edge length $\ell(Q)\in(r/2,r]$ and $R\supset Q$
is a cube with edge length $\ell(R)=r$,
then $|R|\sim|Q|$, which further implies that,
for any $x\in\mathbb{R}^n$,
\begin{align}\label{eq-def-1scalemaxope}
\operatorname{II}(x)\sim
\sup_{\genfrac{}{}{0pt}{}{Q\owns x}{\ell(Q)=r}}
\frac{1}{|Q|^{1-\alpha/n}}
\int_Q\left|W^{\frac1q}(x)W^{-\frac1q}(y)\vec{f}(y)\right|\,dy
=:\widetilde{M}_{W,\alpha,r}\vec{f}(x).
\end{align}
Thus, to prove the boundedness of $M_{W,\alpha,r}$,
it suffices to show the boundedness of both
$M_{W,\alpha, \frac{r}{2}}$ and the one-scale maximal
operator $\widetilde{M}_{W,\alpha,r}$ from $L^p$ to $L^q$.

Next, we prove these two boundedness estimates in turn.
To prove the boundedness of $M_{W,\alpha, \frac{r}{2}}$,
we divide $\mathbb{R}^n$ into the collection $\Gamma$ of
cubes with pairwise disjoint interiors
and with edge length $\frac{r}{2}$.
For any $Q\in \Gamma$, by the structure of
$\mathbb{R}^n$, we can find a cube $\widetilde{Q}$ with
edge length $\frac{3}{2}r$ and with the same center as $Q$.
From the definition of $\widetilde Q$, we deduce that,
for any $x\in Q$ and any cube $R\ni x$ with $\ell(R)\le \frac{r}{2}$,
$R\subset\widetilde Q$, and hence
\begin{align*}
M_{W,\alpha,\frac{r}{2}}\vec{f}(x)=M_{W,\alpha,\frac{r}{2}}
\left(\mathbf{1}_{\widetilde{Q}}\vec{f}\right)(x)\leq
M_{W,\alpha}\left(\mathbf{1}_{\widetilde{Q}}\vec{f}\right)(x),
\end{align*}
where $M_{W,\alpha}\vec{f}$ is as in \eqref{eq-def-fracmax}.
Since there exist $2^n$ cubes $\{Q_i\}_{i=1}^{2^n}$
with edge length $r$ such that $\widetilde{Q}=
\cup_{i=1}^{2^n} Q_i$ and $Q\subset\cap_{i=1}^{2^n} Q_i$,
it follows that, for any $x\in Q$,
\begin{align}\label{eq-MW-MWa}
M_{W,\alpha}\left(\mathbf{1}_{\widetilde{Q}}\vec{f}\right)(x)
\leq\sum_{i=1}^{2^n}M_{W,\alpha}\left(\mathbf{1}_{Q_i}\vec{f}\right)(x).
\end{align}
Using Theorem \ref{thm-extension}, we conclude that,
for any $i\in\{1,\dots,2^n\}$,
there exists $W_i\in\mathscr{A}_{p,q}$ such that
\begin{align}\label{eq-Wi=W}
W_i=W\text{ on }Q_i\text{ and }\left[W_i\right]_{\mathscr{A}_{p,q}}\leq
3^{n(1+\frac{q}{p^{\prime}})}
\left[W\right]_{\mathscr{A}^{\operatorname{loc}}_{p,q}(r)}.
\end{align}
By \eqref{eq-MW-MWa} and \eqref{eq-Wi=W},
we find that, for any $x\in Q$,
\begin{align}\label{eq-W-i}
M_{W,\alpha,\frac{r}{2}}\vec{f}(x)&\leq
\sum_{i=1}^{2^n}M_{W,\alpha}\left(\mathbf{1}_{Q_i}\vec{f}\right)(x)
=\sum_{i=1}^{2^n}M_{W_i,\alpha}\left(\mathbf{1}_{Q_i}\vec{f}\right)(x).
\end{align}
Applying Theorem \ref{thm-bound-max} to each term
in the right-hand side of \eqref{eq-W-i}, we obtain
\begin{align}\label{eq-Q-Q}
&\int_{Q}\left[M_{W,\alpha,\frac{r}{2}}\vec{f}(x)\right]^q\,dx\\
&\quad\lesssim\sum_{i=1}^{2^n}\int_{Q}\left[M_{W_i,\alpha}
(\mathbf{1}_{Q_i}\vec{f})(x)\right]^q\,dx
\leq\sum_{i=1}^{2^n}\int_{\mathbb{R}^n}
\left[M_{W_i,\alpha}
(\mathbf{1}_{Q_i}\vec{f})(x)\right]^q\,dx\nonumber\\
&\quad\lesssim\sum_{i=1}^{2^n}[W_i]^{p^{\prime}
(1-\frac{\alpha}{n})}_{\mathscr{A}_{p,q}}
\left[\int_{Q_i}\left|\vec{f}(x)\right|^p\,dx\right]^{\frac{q}{p}}
\lesssim[W]^{p^{\prime}(1-\frac{\alpha}{n})}_{
\mathscr{A}^{\operatorname{loc}}_{p,q}(r)}
\left[\int_{\widetilde{Q}}\left|\vec{f}(x)\right|^p\,dx\right]^{\frac{q}{p}}.\nonumber
\end{align}
Using this, \eqref{eq-Q-Q}, and the embedding
$\ell^p\hookrightarrow\ell^q$ when $p\leq q$, we conclude that
\begin{align*}
&\int_{\mathbb{R}^n}\left[M_{W,\alpha,\frac{r}{2}}\vec{f}(x)\right]^q\,dx\\
&\quad=\sum_{Q\in\Gamma}\int_{Q}\left[M_{W,\alpha,\frac{r}{2}}\vec{f}(x)\right]^q\,dx
\lesssim[W]^{p^{\prime}(1-\frac{\alpha}{n})}_{
\mathscr{A}^{\operatorname{loc}}_{p,q}(r)}
\sum_{Q\in\Gamma}\left[\int_{\widetilde{Q}}
\left|\vec{f}(x)\right|^p\,dx\right]^{\frac{q}{p}}\\
&\quad\leq[W]^{p^{\prime}(1-\frac{\alpha}{n})}_{
\mathscr{A}^{\operatorname{loc}}_{p,q}(r)}
\left[\sum_{Q\in\Gamma}\int_{\widetilde{Q}}
\left|\vec{f}(x)\right|^p\,dx\right]^{\frac{q}{p}}
\lesssim[W]^{p^{\prime}(1-\frac{\alpha}{n})}_{
\mathscr{A}^{\operatorname{loc}}_{p,q}(r)}
\left[\int_{\mathbb{R}^n}\left|\vec{f}(x)\right|^p\,dx\right]^{\frac{q}{p}},
\end{align*}
where the last inequality follows from the fact that all
the cubes $\{\widetilde{Q}\}_{Q\in\Gamma}$ have bounded overlap.
Taking the $q$-th root on both sides, we obtain
\begin{align}\label{eq-bound-I}
\left\|M_{W,\alpha,\frac{r}{2}}\right\|_{L^p\to L^q}\lesssim
[W]^{\frac{p^{\prime}}{q}(1-\frac{\alpha}{n})}
_{\mathscr{A}^{\operatorname{loc}}_{p,q}(r)},
\end{align}
which completes the proof of the boundedness of $M_{W,\alpha,\frac{r}{2}}$.

Finally, we show the boundedness of $\widetilde{M}_{W,\alpha,r}$.
To this end, for any $\varepsilon\in(0,r]$,
let $\mathscr{R}$ be the set of all cubes
of length $r+\varepsilon$ with lower left-corners
at $\varepsilon\mathbb{Z}^n$.
Then, for any cube $Q$ of edge length $r$,
there exists $R\in\mathscr{R}$ containing $Q$, and
each $x\in\mathbb{R}^n$ is contained in at most
$N:=(\lceil1+r/\varepsilon\rceil+1)^n$ cubes in $\mathscr{R}$.
Next, we split $\mathscr{R}$ into $N$
subcollections $\mathscr R_j$, each of which
consists of pairwise disjoint cubes.
Applying these observations and
\eqref{eq-def-1scalemaxope}, we obtain, for any $x\in\mathbb{R}^n$,
\begin{align*}
\widetilde M_{W,\alpha,r}\vec{f}(x)
&\lesssim\sup_{Q\in\mathscr{R}}\frac{\mathbf{1}_{Q}(x)}{|Q|^{1-\alpha/n}}
\int_{Q}\left|W^{\frac1q}(x)W^{-\frac1q}(y)\vec{f}(y)\right|\,dy \\
&\leq\left(\sum_{j=1}^{N}\left[\sum_{Q\in\mathscr R_j} \frac{\mathbf{1}_{Q}(x)}{|Q|^{1-\alpha/n}}
\int_{Q}\cdots\right]^q\right)^{\frac 1q}
=:\left(\sum_{j=1}^{N}\left[M_{W,\alpha,r;j}
\vec{f}(x)\right]^q\right)^{\frac 1q}.
\end{align*}
This, together with H\"{o}lder's inequality, further implies that
\begin{align}\label{eq-est-1scalemax}
\left\|\widetilde M_{W,\alpha,r}\vec{f}\right\|_{L^q}
&\lesssim\left(\sum_{j=1}^{N}\left\|M_{W,\alpha,r;j}
\vec{f}\right\|_{L^q}^q\right)^{\frac1q}
\leq\left(\sum_{j=1}^{N} 1\right)^{\frac1q}
\max_{j\in\{1,\dots,N\}}
\left\|M_{W,\alpha,r;j}\vec{f}\right\|_{L^q}\\
&\sim\left(1+\frac{r}{\varepsilon}\right)^{\frac nq}
\max_{j\in\{1,\dots,N\}}
\left\|M_{W,\alpha,r;j}\vec{f}\right\|_{L^q}.\nonumber
\end{align}
Since, for any $j\in\{1,\dots,N\}$, the cubes in
$\mathscr{R}_j$ are pairwise disjoint,
by H\"{o}lder's inequality and the definition
\eqref{eq-locApq} of matrix weight constants,
we find that
\begin{align*}
\left\|M_{W,\alpha,r;j}\vec{f}\right\|_{L^q}^q
&=\sum_{Q\in\mathscr R_j}\int_{Q}
\left[|Q|^{\alpha/n}\fint_{Q}
\left|W^{\frac1q}(x)W^{-\frac1q}(y)\vec{f}(y)\right|\,dy\right]^q\,dx \\
&\leq\sum_{Q\in\mathscr R_j}|Q|^{\frac{\alpha}{n}q}
\int_{Q}\left[\fint_{Q}
\left\|W^{\frac1q}(x)W^{-\frac 1q}(y)\right\|^{p'}\,dy
\right]^{\frac{q}{p^{\prime}}}
\left[\fint_{Q}\left|\vec{f}(z)\right|^p\, dz\right]^{\frac{q}{p}}\,dx \\
&=\sum_{Q\in\mathscr R_j}|Q|^{\frac{\alpha}{n}q+1}
\fint_{Q}\left[\fint_{Q}
\left\|W^{\frac1q}(x)W^{-\frac 1q}(y)\right\|^{p'}\,dy
\right]^{\frac{q}{p^{\prime}}}\,dx
\left[\fint_{Q}\left|\vec{f}(z)\right|^p\,dz\right]^{\frac{q}{p}} \\
&\leq[W]_{\mathscr{A}_{p,q}^{\operatorname{loc}}(r+\varepsilon)}
\sum_{Q\in\mathscr R_j}
\left[\int_{Q}\left|\vec{f}(z)\right|^p\,dz\right]^{\frac{q}{p}},
\end{align*}
where in the last inequality we used the identity
\begin{align*}
\frac{\alpha}{n}q+1=\frac{q}{p}
\end{align*}
to eliminate the power of $|Q|$ against the average over $Q$ in the last step.

From the embedding $\ell^p\hookrightarrow\ell^q$
when $p\leq q$, we deduce that
\begin{align*}
\left\|M_{W,\alpha,r;j}\vec{f}\right\|_{L^q}^q
\leq [W]_{\mathscr{A}_{p,q}^{\operatorname{loc}}(r+\varepsilon)}
\left(\sum_{Q\in\mathscr R_j}\int_{Q}
\left|\vec{f}(z)\right|^p\,dz\right)^{\frac qp}
\leq [W]_{\mathscr{A}_{p,q}^{\operatorname{loc}}(r+\varepsilon)}
\left\|\vec{f}\right\|_{L^p}^q.
\end{align*}
This, together with \eqref{eq-est-1scalemax}, further implies that
\begin{align}\label{eq-bound-II}
\left\|\widetilde M_{W,\alpha,r}\right\|_{L^p\to L^q}
\lesssim\left(1+\frac{r}{\varepsilon}\right)^{\frac nq}
[W]_{\mathscr{A}_{p,q}^{\operatorname{loc}}(r+\varepsilon)}^{\frac 1q}.
\end{align}
Combining \eqref{eq-bound-I} and \eqref{eq-bound-II},
we obtain
\begin{align}\label{eq:2terms}
\left\|M_{W,\alpha,r}\right\|_{L^p\to L^q}
\lesssim[W]_{\mathscr{A}^{\operatorname{loc}}_{p,q}(r)}^{\frac{p'}{q}
(1-\frac{\alpha}{n})}+\left(1+\frac{r}{\varepsilon}\right)^{\frac nq}[W]_{\mathscr{A}_{p,q}^{\operatorname{loc}}(r+\varepsilon)}^{\frac 1q},
\end{align}
which is exactly the desired estimate \eqref{eq-bound-maxloc}.

By \eqref{eq:2terms} with $\varepsilon:=\frac{r}{2}$,
Theorem \ref{thm:Apq(1+eta)}
with $\eta:=\frac{1}{2}$, and the fact that
$[W]_{\mathscr{A}^{\operatorname{loc}}_{p,q}(r)}\geq 1$
established in Corollary \ref{coro-[W]geq1}, we find that
\begin{align*}
\left\|M_{W,\alpha,r}\right\|_{L^p\to L^q}\lesssim
[W]^{\frac{p^{\prime}}{q}(1-\frac{\alpha}{n})}
_{\mathscr{A}^{\operatorname{loc}}_{p,q}(r)}+[W]^{\frac{2}{q}}
_{\mathscr{A}^{\operatorname{loc}}_{p,q}(r)}\lesssim
[W]^{\max\{\frac{p^{\prime}}{q}(1-\frac{\alpha}{n}), \frac{2}{q}\}}
_{\mathscr{A}^{\operatorname{loc}}_{p,q}(r)}.
\end{align*}
Thus, \eqref{eq-bound-maxloc-2} holds.
Note that $\frac1q=\frac1p-\frac{\alpha}{n}$
and hence
\begin{align*}
p'\left(1-\frac{\alpha}{n}\right)=p'\left(\frac1q
+\frac{1}{p'}\right)=1+\frac{p'}{q}.
\end{align*}
In particular, if $q\leq p'$, namely
$1+\frac{p'}{q}\geq 2$, it immediately follows from
\eqref{eq-bound-maxloc-2} that \eqref{eq-bound-maxloc-1} holds.
This completes the proof of \eqref{eq-bound-maxloc-1}
and hence Theorem \ref{thm-bound-maxloc}.
\end{proof}

To show that the exponent of the weight constant
in \eqref{eq-bound-maxloc-1} of Theorem \ref{thm-bound-maxloc}
is sharp, we need to introduce the scalar local fractional
maximal operator $M_{\alpha,r}$.
Let $r\in(0,\infty)$ and $\alpha\in[0,n)$.
The \emph{scalar local fractional maximal operator}
$M_{\alpha,r}$ is defined by setting,
for any measurable function $f$
on $\mathbb{R}^n$ and for any $x\in\mathbb{R}^n$,
\begin{align}\label{eq-def-locfracmax}
M_{\alpha,r} f(x):=\sup_{
\genfrac{}{}{0pt}{}{Q \ni x}{\ell(Q)\leq r}}
\frac{1}{|Q|^{1-\frac{\alpha}{n}}}
\int_Q\left|f(y)\right|\, dy,
\end{align}
where the supremum is taken over all cubes $Q\subset\mathbb{R}^n$
with $\ell(Q)\leq r$ that contain $x$.
In the next proposition, via a scaling argument, we show
that the exponent of the weight constant
in \eqref{eq-bound-maxloc-1} cannot be improved to be strictly smaller
than the corresponding one in \eqref{eq-quantativebound-fracmax}
and hence is sharp.

\begin{proposition}\label{prop-sharpness-fractmax}
Suppose that $\alpha\in[0,n)$,
$p\in(1,\frac{n}{\alpha})$,
and $q\in(1,\infty)$ satisfy $\frac{1}{q}=\frac{1}{p}-\frac{\alpha}{n}$.
If there exist $r,s\in(0,\infty)$ and a constant $C\in (0,\infty)$ such that,
for any $w\in A^{\operatorname{loc}}_{p,q}(r)$ and
$f\in L^p(w^\frac{p}{q})$,
\begin{align}\label{eq-exponent-s}
\left\|M_{\alpha,r}f\right\|_{L^q(w)}
\le C[w]^s_{A^{\operatorname{loc}}_{p,q}(r)}
\left\|f\right\|_{L^p(w^\frac{p}{q})},
\end{align}
then $s\geq (1-\frac{\alpha}{n})\frac{p'}{q}$.
\end{proposition}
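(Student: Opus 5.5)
We argue by contradiction through a scaling argument: from \eqref{eq-exponent-s} we derive a global estimate $\|M_\alpha f\|_{L^q(w)}\lesssim [w]_{A_{p,q}}^s\|f\|_{L^p(w^{p/q})}$ for every $w\in A_{p,q}$. Since the exponent $(1-\frac{\alpha}{n})\frac{p'}{q}$ in \eqref{eq-quantativebound-fracmax} is sharp (Lacey et al.\ \cite{lmpt10}), this forces $s\geq(1-\frac{\alpha}{n})\frac{p'}{q}$.

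\emph{Step 1 (dilation).} Fix $w\in A_{p,q}$ and $f$ bounded with compact support. For $\lambda\in(0,\infty)$ set $w_\lambda(x):=w(\lambda x)$ and $f_\lambda(x):=f(\lambda x)$. Averages over cubes are dilation invariant, so
\begin{align*}
[w_\lambda]_{A^{\operatorname{loc}}_{p,q}(r)}=[w]_{A^{\operatorname{loc}}_{p,q}(\lambda r)}\le [w]_{A_{p,q}}.
\end{align*}
A change of variables gives $M_{\alpha,r}f_\lambda(x)=\lambda^{-\alpha}M_{\alpha,\lambda r}f(\lambda x)$. Moreover,
\begin{align*}
\|M_{\alpha,r}f_\lambda\|_{L^q(w_\lambda)}=\lambda^{-\alpha-\frac nq}\|M_{\alpha,\lambda r}f\|_{L^q(w)}
\quad\text{and}\quad
\|f_\lambda\|_{L^p(w_\lambda^{p/q})}=\lambda^{-\frac np}\|f\|_{L^p(w^{p/q})}.
\end{align*}
Because $\frac1q=\frac1p-\frac\alpha n$, we have $\alpha+\frac nq=\frac np$, so the powers of $\lambda$ cancel. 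Applying \eqref{eq-exponent-s} to $w_\lambda$ and $f_\lambda$ then yields
\begin{align*}
\|M_{\alpha,\lambda r}f\|_{L^q(w)}\le C[w]_{A_{p,q}}^s\|f\|_{L^p(w^{p/q})}
\end{align*}
uniformly in $\lambda$.

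\emph{Step 2 (limit).} As $\lambda\to\infty$, $M_{\alpha,\lambda r}f$ increases pointwise to $M_\alpha f$, because every cube eventually has edge length at most $\lambda r$. The monotone convergence theorem gives $\|M_\alpha f\|_{L^q(w)}\le C[w]^s_{A_{p,q}}\|f\|_{L^p(w^{p/q})}$. By density and monotone convergence again, this extends to all $f\in L^p(w^{p/q})$.

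\emph{Step 3 (sharpness).} Suppose $s<(1-\frac\alpha n)\frac{p'}{q}$. The sharpness in \eqref{eq-quantativebound-fracmax} means there are weights $w$ with $[w]_{A_{p,q}}\to\infty$ and $\|M_\alpha\|_{L^p(w^{p/q})\to L^q(w)}\gtrsim[w]_{A_{p,q}}^{(1-\frac\alpha n)\frac{p'}{q}}$, typically power weights $|x|^{(n-\delta)(\cdot)}$ with $\delta\to0$. This contradicts the bound of Step 2, so $s\geq(1-\frac{\alpha}{n})\frac{p'}{q}$.

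The main point needing care is Step 3: the argument uses the sharpness statement of \cite{lmpt10} as a lower bound along a family of weights, not merely as an upper bound. If that form is not explicitly available, one has to rerun the standard power-weight test, taking $f=|x|^{-\beta}\mathbf 1_{B(\mathbf0,1)}$ and $w$ a power weight near the endpoint of the $A_{p,q}$ range, and estimate both sides by hand.
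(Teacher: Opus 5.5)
Your proposal is correct and follows essentially the same route as the paper: the dilation identity $M_{\alpha,r}f_\lambda(x)=\lambda^{-\alpha}M_{\alpha,\lambda r}f(\lambda x)$, the identity $[w_\lambda]_{A^{\operatorname{loc}}_{p,q}(r)}=[w]_{A^{\operatorname{loc}}_{p,q}(\lambda r)}\le[w]_{A_{p,q}}$, cancellation of the powers of $\lambda$ via $\alpha+\frac nq=\frac np$, monotone convergence as $\lambda\to\infty$, and finally the sharpness of \eqref{eq-quantativebound-fracmax}. Restricting to bounded compactly supported $f$ and then extending by density is unnecessary, since \eqref{eq-exponent-s} is assumed for every $f\in L^p(w^{p/q})$, but it does no harm.
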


\begin{proof}
Suppose that $r,s\in(0,\infty)$ satisfy the assumption of
the present proposition.
Let $w \in A_{p,q}$ and $f \in L^p(w^\frac{p}{q})$. For any
$\lambda\in(0,\infty)$ and $x\in\mathbb{R}^n$, let
\begin{align}\label{eq-flambda}
f_\lambda(x) := f(\lambda x)\text{ and }w_\lambda(x) := w(\lambda x).
\end{align}
By the definition of $M_{\alpha,r}$ and
a change of variables, we find that,
for any $\lambda\in(0,\infty)$ and $x\in\mathbb{R}^n$,
\begin{align*}
M_{\alpha, r} f_\lambda(x)
&= \sup_{\genfrac{}{}{0pt}{}{Q \ni x}{\ell(Q) \le r}}
\frac{1}{|Q|^{1-\frac{\alpha}{n}}} \int_Q |f(\lambda y)| \, dy \\
&= \lambda^{-\alpha}
\sup_{\genfrac{}{}{0pt}{}{\lambda Q \ni \lambda x}{\ell(\lambda Q) \le \lambda r}}
\frac{1}{|\lambda Q|^{1-\frac{\alpha}{n}}} \int_{\lambda Q} |f(y)| \, dy
= \lambda^{-\alpha} M_{\alpha, \lambda r} f(\lambda x).
\end{align*}
Applying this dilation equality and a change of variables,
we obtain, for any $\lambda\in(0,\infty)$,
\begin{align}\label{eq-Lq(w)}
\left\|M_{\alpha, \lambda r} f\right\|_{L^q(w)}
&=\lambda^{\frac{n}{q}}\left\{\int_{\mathbb{R}^n}
\left[M_{\alpha, \lambda r}
f(\lambda x)\right]^q w(\lambda x)\,dx\right\}^{\frac{1}{q}}\\
&=\lambda^{\frac{n}{q}+\alpha}\left\{\int_{\mathbb{R}^n}\left[M_{\alpha, r}
f_\lambda(x)\right]^q w_{\lambda}(x)\,dx\right\}^{\frac{1}{q}}\nonumber\\
&=\lambda^{\frac{n}{q}+\alpha}\left\|M_{\alpha, r}
f_\lambda\right\|_{L^q(w_\lambda)}.\nonumber
\end{align}
Observe that, for any $\lambda\in(0,\infty)$, a change of variables also yields
\begin{align*}
[w_\lambda]_{A_{p,q}^{\operatorname{loc}}(r)}=
[w]_{A_{p,q}^{\operatorname{loc}}(\lambda r)}
\leq[w]_{A_{p,q}}<\infty.
\end{align*}
Using this observation, \eqref{eq-Lq(w)}, and the assumption \eqref{eq-exponent-s},
we conclude that
\begin{align}\label{eq-M}
\left\|M_{\alpha, \lambda r} f\right\|_{L^q(w)}
&=\lambda^{\frac{n}{q}+\alpha}\left\|M_{\alpha, r} f_\lambda\right\|_{L^q(w_\lambda)}\lesssim\lambda^{\frac{n}{q}+\alpha}
[w_\lambda]^s_{A_{p,q}^{\operatorname{loc}}(r)}
\left\|f_\lambda\right\|_{L^p(w_\lambda^\frac{p}{q})}\\
&=\lambda^{\frac{n}{q}+\alpha}
[w]^s_{A_{p,q}^{\operatorname{loc}}(\lambda r)}
\left\|f_\lambda\right\|_{L^p(w_\lambda^\frac{p}{q})},\nonumber
\end{align}
where the implicit positive constant is independent of
$f$, $w$, and $\lambda$.
By the scaling property of $L^p$, we find that, for any $\lambda\in(0,\infty)$,
\begin{align*}
\left\|f_\lambda\right\|_{L^p(w_\lambda^\frac{p}{q})}= \lambda^{-\frac{n}{p}}
\left\|f\right\|_{L^p(w^\frac{p}{q})}.
\end{align*}
Applying this, \eqref{eq-M}, and the assumption that
$\frac{1}{q} = \frac{1}{p} - \frac{\alpha}{n}$, we obtain,
for any $\lambda\in(0,\infty)$,
\begin{align}\label{eq-G}
\left\|M_{\alpha, \lambda r}f\right\|_{L^q(w)}
\lesssim[w]^s_{A_{p,q}^{\operatorname{loc}}(\lambda r)}
\left\|f\right\|_{L^p(w^\frac{p}{q})}.
\end{align}
Note that, as $\lambda\to\infty$,
$M_{\alpha, \lambda r} f$
increases pointwise to $M_{\alpha}f$ and
$[w]_{A_{p,q}^{\operatorname{loc}}(\lambda r)}$
increases to $[w]_{A_{p,q}}$. Using this and the
monotone convergence theorem and letting $\lambda\to\infty$
in \eqref{eq-G}, we conclude that
\begin{align*}
\|M_\alpha f\|_{L^q(w)} \lesssim [w]_{A_{p,q}}^s \|f\|_{L^p(w^\frac{p}{q})},
\end{align*}
which, together with the arbitrariness of $w \in A_{p,q}$ and
$f\in L^p(w^\frac{p}{q})$, further implies that
\begin{align*}
\left\|M_\alpha\right\|_{L^p(w^\frac{p}{q})\to L^q(w)}
\lesssim [w]^{s}_{A_{p,q}},
\end{align*}
where the implicit positive constant is independent of $w$.
It follows from the sharpness of \eqref{eq-quantativebound-fracmax}
that $s\geq (1-\frac{\alpha}{n})\frac{p'}{q}$.
This completes the proof of Proposition \ref{prop-sharpness-fractmax}.
\end{proof}

\begin{remark}\label{rmk-max}
Let all the assumptions be as in Theorem \ref{thm-bound-maxloc}.
\begin{itemize}
\item[{\rm (i)}] The exponent of
$[W]_{\mathscr{A}^{\operatorname{loc}}_{p,q}(r)}$
in \eqref{eq-bound-maxloc-1} is sharp.
Indeed, if this exponent could be improved to
a strictly smaller one, then \eqref{eq-bound-maxloc-1}
would imply that the corresponding inequality
in the scalar-valued setting also holds
with this smaller exponent, which is
impossible by Proposition \ref{prop-sharpness-fractmax}.

However, to obtain \eqref{eq-bound-maxloc-1}, we need an
additional assumption that $q\leq p'$. In the case
$\alpha=0$, this assumption is equivalent to $p\in(1,2]$.
It remains unknown whether the
inequality \eqref{eq-bound-maxloc-1} holds
without the additional assumption $q\leq p'$.

\item[{\rm (ii)}]
It is worth pointing out that, in the particular case $n=m=1$ and $\alpha=0$
(the one-dimensional and scalar-valued setting), \eqref{eq-bound-maxloc-1}
can be established for all $p \in (1,\infty)$ (see Theorem \ref{thm:strong}).
In this setting, the weights and local maximal operators
can be separated. Thus, we can use the interpolation methods.
Furthermore, the proof of Theorem \ref{thm:strong}
also relies heavily on the total ordering of $\mathbb{R}$.
Since these properties do not hold for matrix weights
in $\mathbb{R}^n$ with $n>1$,
the proof of Theorem \ref{thm:strong} seems inapplicable to
the general case of \eqref{eq-bound-maxloc-1}.

\item[{\rm (iii)}] If $m=1$ and $\alpha=0$
(the local maximal operator in the scalar-valued setting),
Theorem \ref{thm-bound-maxloc} also improves \cite[Theorem 2.3.7]{ooi24}.
In \cite[Theorem 2.3.7]{ooi24}, Ooi used the centered local maximal function
to obtain the weighted boundedness of the uncentered local maximal function,
yielding a quantitative dependence on
$[w]^{\frac{1}{p-1}}_{A_{p}^{\mathrm{loc}}(6r)}$.
This bound is obviously larger than the one
given in \eqref{eq-bound-maxloc}.
\end{itemize}
\end{remark}

Borrowing some ideas from the proofs of
\cite[Proposition 3.1 and Corollary 3.2]{im19},
we finally give a characterization of the class
$\mathscr{A}^{\operatorname{loc}}_{p,q}(r)$ of
local Muckenhoupt matrix weights
via matrix-weighted maximal operator $M_{W,\alpha,r}$. For this purpose,
we recall the following averaging operators. Let $\alpha\in[0,\infty)$.
For any cube $Q\subset\mathbb{R}^n$,
the \emph{averaging operator} $\operatorname{Aver}^{(\alpha)}_Q$
is defined by setting, for any integrable vector-valued function
$\vec{f}$ on $Q$ and for any $x\in\mathbb{R}^n$,
\begin{align*}
\operatorname{Aver}^{(\alpha)}_Q\vec{f}(x):=
\frac{1}{|Q|^{1-\frac{\alpha}{n}}}
\int_Q\vec{f}(y)\,dy\mathbf{1}_Q(x).
\end{align*}

\begin{corollary}\label{cor-Apq-Mloc}
Let $r\in(0,\infty)$ and $W$ be a matrix weight.
Suppose that $\alpha\in[0,n)$, $p\in(1,\frac{n}{\alpha})$,
and $q\in(1,\infty)$ satisfy $\frac{1}{q}=\frac{1}{p}-\frac{\alpha}{n}$.
Then the following three statements are mutually equivalent.
\begin{itemize}
\item[{\rm (i)}] $W\in\mathscr{A}^{\operatorname{loc}}_{p,q}(r)$.
\item[{\rm (ii)}] $M_{W,\alpha,r}$ is bounded from $L^p$ to $L^q$.
\item[{\rm (iii)}]
The operators $\operatorname{Aver}^{(\alpha)}_Q$,
for cubes $Q\subset\mathbb{R}^n$ with $\ell(Q)\leq r$,
are bounded from $L^p(W^{\frac{p}{q}})$ to $L^q(W)$
uniformly on $Q$.
\end{itemize}
\end{corollary}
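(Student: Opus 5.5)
I would prove the cycle (i)$\Rightarrow$(ii)$\Rightarrow$(iii)$\Rightarrow$(i).

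\emph{(i)$\Rightarrow$(ii).} This is immediate from Theorem \ref{thm-bound-maxloc}. If $W\in\mathscr{A}^{\operatorname{loc}}_{p,q}(r)$, then \eqref{eq-bound-maxloc-2} gives the finite bound
$$\|M_{W,\alpha,r}\|_{L^p\to L^q}\lesssim[W]^{\max\{\frac{p'}{q}(1-\frac{\alpha}{n}),\frac2q\}}_{\mathscr{A}^{\operatorname{loc}}_{p,q}(r)}.$$

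\emph{(ii)$\Rightarrow$(iii).} Fix a cube $Q$ with $\ell(Q)\le r$ and take $\vec f\in L^p(W^{\frac pq})$. Set $\vec g:=W^{\frac1q}\vec f$, so that $\|\vec g\|_{L^p}=\|\vec f\|_{L^p(W^{p/q})}$. For $x\in Q$,
$$\left|W^{\frac1q}(x)\operatorname{Aver}^{(\alpha)}_Q\vec f(x)\right|=|Q|^{\frac{\alpha}{n}-1}\left|\int_Q W^{\frac1q}(x)W^{-\frac1q}(y)\vec g(y)\,dy\right|\le M_{W,\alpha,r}\vec g(x),$$
because $Q\ni x$ is admissible in the supremum. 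Taking $L^q$ norms gives $\|\operatorname{Aver}^{(\alpha)}_Q\vec f\|_{L^q(W)}\le\|M_{W,\alpha,r}\|_{L^p\to L^q}\|\vec f\|_{L^p(W^{p/q})}$, uniformly in $Q$.

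\emph{(iii)$\Rightarrow$(i).} This is the main step and follows the duality argument of \cite[Proposition 3.1]{im19}. Fix $Q$ with $\ell(Q)\le r$ and a vector $\vec e\in\mathbb{C}^m$.
\begin{enumerate}
\item Test the uniform bound $C$ on $\vec f:=\mathbf 1_Q h\,W^{-\frac1q}\vec e$ with scalar $h\ge0$. The hypothesis becomes
$$|Q|^{\frac{\alpha}{n}-1}\left[\int_Q\left|W^{\frac1q}(x)\int_Q W^{-\frac1q}(y)\vec e\,h(y)\,dy\right|^q dx\right]^{\frac1q}\le C\|h\mathbf 1_Q\|_{L^p}.$$
\item Replace the left-hand side using reducing operators. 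Let $A_Q$ be the reducing operator of order $q$ for $W$ from \eqref{eq-reduce}. The left-hand side is then comparable to $|Q|^{\frac{\alpha}{n}-1+\frac1q}\left|A_Q\int_Q W^{-\frac1q}(y)\vec e\,h(y)\,dy\right|$.
\item Pair with a unit vector to turn this into a scalar integral in $h$, namely $\int_Q \langle A_Q W^{-\frac1q}(y)\vec e,\vec u\rangle h(y)\,dy$. Take the supremum over $h$ with $\|h\|_{L^p}\le1$. By $L^p$--$L^{p'}$ duality and $\frac{\alpha}{n}-1+\frac1q=\frac1p-1=-\frac1{p'}$, this gives
$$\left[\fint_Q\left|W^{-\frac1q}(y)A_Q\vec u\right|^{p'}dy\right]^{\frac1{p'}}\lesssim C,$$
using that $A_Q$ is self-adjoint.
\item Take the supremum over unit vectors $\vec u$. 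This bounds $\|\widetilde A_QA_Q\|$, where $\widetilde A_Q$ is the reducing operator of order $p'$ for $W^{-\frac{p'}{q}}$, since $|W^{-\frac1q}(y)\vec z|^{p'}$ is exactly the defining quantity for $\widetilde A_Q$.
\item Conclude with Lemma \ref{lem-W-dualW-dkps}: $\|A_Q\widetilde A_Q\|=\|\widetilde A_QA_Q\|\lesssim C$ uniformly over $\ell(Q)\le r$, so $[W]^{\frac1q}_{\mathscr{A}^{\operatorname{loc}}_{p,q}(r)}\lesssim C$.
\end{enumerate}

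The main obstacle is that Lemma \ref{lem-W-dualW-dkps} presupposes that $W^{-\frac{p'}{q}}$ is locally integrable, so that $\widetilde A_Q$ exists. I would handle this by first applying step 3 with $h$ replaced by truncations $h\mathbf 1_{\{\|W^{-1/q}\|\le N\}}$. This yields bounds on $\int_{Q\cap\{\|W^{-1/q}\|\le N\}}|W^{-\frac1q}A_Q\vec u|^{p'}$ that are uniform in $N$. Since $A_Q$ is invertible, monotone convergence as $N\to\infty$ gives local integrability of $\|W^{-\frac1q}\|^{p'}$, and the argument above then applies verbatim.
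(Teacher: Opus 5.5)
Your proof is correct and follows essentially the paper's route. The paper gets (i)$\Rightarrow$(ii) from Theorem \ref{thm-bound-maxloc} and obtains (ii)$\Rightarrow$(iii)$\Rightarrow$(i) from the reducing-operator duality argument of \cite[Proposition 3.1 and Corollary 3.2]{im19} restricted to cubes with $\ell(Q)\le r$; the paper omits those details, and you write them out, including the truncation that secures the local integrability of $W^{-\frac{p'}{q}}$ needed for Lemma \ref{lem-W-dualW-dkps}.

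The one small slip is in step 3. With scalar $h$ and a fixed $\vec e$, duality only controls $\langle W^{-\frac1q}(y)\vec e, A_Q\vec u\rangle$. To bound $|W^{-\frac1q}(y)A_Q\vec u|$ you must let $\vec e$ run over an orthonormal basis, or test with a vector-valued $\vec h$ instead.
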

\begin{proof}
The implication (i) $\Longrightarrow$ (ii) immediately follows
from Theorem \ref{thm-bound-maxloc}.
The implications (ii) $\Longrightarrow$ (iii)
and (iii) $\Longrightarrow$ (i) can be proved by
repeating the arguments used in the proofs of
\cite[Proposition 3.1 and Corollary 3.2]{im19}
restricted to cubes of edge length not greater than $r$; we omit the details.
This completes the proof of Corollary \ref{cor-Apq-Mloc}.
\end{proof}

\subsection{Local Fractional Integral Operators}\label{s3-2}

In this subsection, we establish the boundedness of local
fractional integral operators on local matrix-weighted Lebesgue spaces.
Let $\alpha\in(0,n)$ and $r\in(0,\infty)$.
The \emph{local fractional integral operator}
$I_{\alpha, r}$ is defined by setting, for any bounded function $f$
on $\mathbb{R}^n$ with compact support and for any $x\in\mathbb{R}^n$,
\begin{align}\label{eq-def-fracintloc}
I_{\alpha, r}f(x)=\int_{Q(x,r)} \frac{f(y)}{|x-y|^{n-\alpha}} \,dy,
\end{align}
where $Q(x,r)$ denotes the cube in $\mathbb{R}^n$ centered at $x$
and the edge length $2r$. To establish the boundedness of
$I_{\alpha, r}$, we first recall the boundedness of $I_\alpha$ on
matrix-weighted Lebesgue spaces, which was established
by Isralowitz and Moen in \cite[Theorem 1.4]{im19}.

\begin{theorem}\label{thm-bound-fracint}
Suppose that $\alpha\in(0,n)$, $p\in(1,\frac{n}{\alpha})$,
and $q\in(1,\infty)$ satisfy $\frac{1}{q}=
\frac{1}{p}-\frac{\alpha}{n}$.
Then, for any $W\in\mathscr{A}_{p,q}$,
\begin{align*}
\left\|I_\alpha\right\|_{L^p(W^{\frac{p}{q}}) \rightarrow L^q(W)}
\lesssim[W]_{\mathscr{A}_{p,q}}^{\frac{p^{\prime}}{q}(1-\frac{\alpha}{n})
+\frac{1}{q^{\prime}}},
\end{align*}
where the implicit positive constant is independent of $W$
and $I_\alpha$ is the same as in \eqref{eq-fract}.
\end{theorem}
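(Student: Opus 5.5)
The statement is recalled from \cite[Theorem 1.4]{im19}, and the paper may simply cite it. If one wants a self-contained argument, I would reduce to the global weighted bound for the fractional maximal operator (Theorem \ref{thm-bound-max}) through sparse domination and duality. This is only a sketch: the final step depends on an auxiliary-maximal-operator bound that I have not checked.

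\emph{Reformulation.} Set $\vec g:=W^{\frac1q}\vec f$, so that $\|\vec f\|_{L^p(W^{p/q})}=\|\vec g\|_{L^p}$. The claim then becomes that
$$T_W\vec g(x):=W^{\frac1q}(x)\,I_\alpha\bigl(W^{-\frac1q}\vec g\bigr)(x)$$
is bounded from $L^p$ to $L^q$ with the stated power of $[W]_{\mathscr{A}_{p,q}}$.

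\emph{Dyadic reduction.} By the three-lattice theorem, I would bound the kernel $|x-y|^{\alpha-n}$ by a finite sum of dyadic fractional kernels $\sum_{Q\in\mathcal D^t}|Q|^{\frac\alpha n-1}\mathbf 1_Q(x)\mathbf 1_Q(y)$. A standard stopping-time argument, using that $|Q|^{\alpha/n}$ increases with $Q$, replaces each such sum by a sum over a sparse family $\mathcal S$. This gives
$$|T_W\vec g(x)|\lesssim\sum_{t}\sum_{Q\in\mathcal S^t}|Q|^{\frac\alpha n}\mathbf 1_Q(x)\fint_Q\bigl|W^{\frac1q}(x)W^{-\frac1q}(y)\vec g(y)\bigr|\,dy .$$

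\emph{Splitting the weights.} Let $A_Q$ be the reducing operator of order $q$ for $W$ and $\widetilde A_Q$ that of order $p'$ for $W^{-p'/q}$, as in Lemma \ref{lem-W-dualW-dkps}. Inside each cube I would insert $A_Q^{-1}A_Q$ and bound the integrand by
$$\|W^{\frac1q}(x)A_Q^{-1}\|\,\|A_QW^{-\frac1q}(y)\|\,|\vec g(y)| .$$
Next I dualize against a nonnegative $h\in L^{q'}$. The form to control is
$$\sum_{Q\in\mathcal S}|Q|^{\frac\alpha n+1}\Bigl(\fint_Q\|W^{\frac1q}A_Q^{-1}\|\,h\Bigr)\Bigl(\fint_Q\|A_QW^{-\frac1q}\|\,|\vec g|\Bigr).$$
The two averages are handled separately.
\begin{itemize}
\item For the $\vec g$-factor I use that $\|A_QW^{-1/q}(y)\|\lesssim\|A_Q\widetilde A_Q\|$ in $L^{p'}(Q)$-average. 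This lets me dominate it by a weighted fractional maximal expression of the type in Theorem \ref{thm-bound-max}, which supplies the power $[W]^{\frac{p'}{q}(1-\frac\alpha n)}_{\mathscr{A}_{p,q}}$.
\item For the $h$-factor, the scalar functions $x\mapsto\|W^{\frac1q}(x)A_Q^{-1}\|^q$ have $Q$-average $\sim1$. By Lemma \ref{lem-W-w} they behave uniformly like scalar $A_\gamma$ weights. A matrix reverse H\"older inequality with exponent $1+c[W]^{-1}_{\mathscr{A}_{p,q}}$ should then bound the associated auxiliary maximal operator on $L^{q'}$ by $[W]^{\frac1{q'}}_{\mathscr{A}_{p,q}}$.
\end{itemize}

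\emph{Summation.} Sparseness, H\"older's inequality with exponents $q$ and $q'$, and $\frac1q=\frac1p-\frac\alpha n$ then let me sum over $\mathcal S$. The powers of $|Q|$ cancel exactly as in the proof of Theorem \ref{thm-bound-maxloc}, and the two weight factors multiply to the exponent $\frac{p'}{q}(1-\frac\alpha n)+\frac1{q'}$.

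\emph{Main obstacle.} The delicate step is the $h$-side estimate, namely extracting exactly $[W]^{1/q'}$ from the non-separable factor $\|W^{\frac1q}(x)A_Q^{-1}\|$. I would first try the sharp matrix reverse H\"older inequality, with $[W]_{\mathscr{A}_{p,q}}$ playing the role of $[W]_{A_\infty}$, combined with a Buckley-type bound for the $L^{q'}$ maximal operator. If that does not yield the claimed exponent, I would simply invoke \cite[Theorem 1.4]{im19}.
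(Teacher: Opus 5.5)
The paper does not prove this statement. It is recalled verbatim from \cite[Theorem 1.4]{im19}, so your opening remark and your fallback are exactly what the paper does. As a self-contained argument, however, your sketch fails at a different step from the one you flagged.

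\textbf{The $h$-side is fine.} The scalar weights $|W^{1/q}e|^q$ lie uniformly in $A_{1+q/p'}$ with constant at most $[W]_{\mathscr{A}_{p,q}}$ (global version of Lemma~\ref{lem-W-w}). Lemma~\ref{lem:rhi} with $\epsilon\sim[W]_{\mathscr{A}_{p,q}}^{-1}$, together with $\fint_Q|W^{1/q}A_Q^{-1}e|^q\le|e|^q$, gives $\fint_Q\|W^{1/q}A_Q^{-1}\|\,h\lesssim\langle h^s\rangle_Q^{1/s}$ with $s:=(q(1+\epsilon))'<q'$. Then $\|(M(h^s))^{1/s}\|_{L^{q'}}\lesssim\epsilon^{-1/q'}\|h\|_{L^{q'}}$, which is $[W]^{1/q'}$.

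\textbf{The gap is the $\vec g$-side.} Theorem~\ref{thm-bound-max} concerns $M_{W,\alpha}$, where $W^{1/q}(x)$ sits at the evaluation point. Your factor is $\sup_{Q\ni x}|Q|^{\alpha/n}\fint_Q\|A_QW^{-1/q}\|\,|\vec g|$ instead. The two are not pointwise comparable (for $m=1$ it is $w(x)^{1/q}$ against $\langle w\rangle_Q^{1/q}$), so the exponent of Theorem~\ref{thm-bound-max} cannot be imported. Moreover, the estimate you do state, $(\fint_Q\|A_QW^{-1/q}\|^{p'})^{1/p'}\lesssim\|A_Q\widetilde A_Q\|$, only yields $[W]^{1/q}(M_{\alpha p}(|\vec g|^p))^{1/p}$. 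This is \emph{not} bounded from $L^p$ to $L^q$: it is the endpoint where $M_{\alpha p}$ is only of weak type $(1,\frac qp)$.

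The missing step is your own reverse-H\"older trick, run on the dual side:
\begin{itemize}
\item[(a)] Split $\|A_QW^{-1/q}(y)\|\le\|A_Q\widetilde A_Q\|\,\|W^{-1/q}(y)\widetilde A_Q^{-1}\|$.
\item[(b)] The weights $|W^{-1/q}e|^{p'}$ lie uniformly in $A_{1+p'/q}$ with constant $\lesssim[W]^{p'/q}$ (global versions of Corollary~\ref{cor-Ap-Ap'} and Lemma~\ref{lem-W-w}).
\item[(c)] With $\epsilon\sim[W]^{-p'/q}$ and $s:=(p'(1+\epsilon))'<p$, you get the bound $[W]^{1/q}\sup_{Q\ni x}|Q|^{\alpha/n}\langle|\vec g|^s\rangle_Q^{1/s}$.
\item[(d)] Its $L^p\to L^q$ norm is $\lesssim\epsilon^{-1/q}$, since $\frac ps-1\sim\epsilon$.
\end{itemize}
The total is $[W]^{\frac1q+\frac{p'}{q^2}}=[W]^{\frac{p'}{q}(1-\frac\alpha n)}$, as required.

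\textbf{The summation step needs repair.} You cannot cancel powers of $|Q|$ as in the proof of Theorem~\ref{thm-bound-maxloc}, because there each $\mathscr R_j$ consisted of pairwise disjoint cubes. Per-cube H\"older over a multiscale sparse family leaves $\sum_{Q\in\mathcal S}\|\vec g\mathbf 1_Q\|_{L^p}\|h\mathbf 1_Q\|_{L^{q'}}$, which is unbounded along a chain of nested dyadic ancestors. Instead, take pairwise disjoint $E_Q\subset Q$ with $|Q|\le2|E_Q|$. Then the form is at most $2\int\mathcal M'\vec g\,\widetilde{\mathcal M}h\le2\|\mathcal M'\vec g\|_{L^q}\|\widetilde{\mathcal M}h\|_{L^{q'}}$, where $\mathcal M'$ and $\widetilde{\mathcal M}$ are the two maximal operators above. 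The relation $\frac1q=\frac1p-\frac\alpha n$ enters only through the $L^p\to L^q$ bound of $\mathcal M'$.

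\textbf{The sparse reduction is not the scalar stopping-time argument.} The norm $|W^{1/q}(x)\,\cdot\,|$ varies with $x$. Stopping on scalar averages such as $\langle|W^{-1/q}\vec g|\rangle_Q$ does not control $\fint_Q|W^{1/q}(x)W^{-1/q}\vec g|$ by its value on the stopping parent uniformly in $x$. Already for $m=2$, a function equal to $e_2$ on $Q$ and $e_1$ on $P\setminus Q$ never triggers a scalar stop. You need the convex-body stopping of \cite{nptv}, applied to $K_Q:=\{\fint_QW^{-1/q}\vec g\,\psi:\ \|\psi\|_{L^\infty}\le1\}$. It yields your displayed sparse bound, because $\sup_{v\in K_Q}|Av|\le\fint_Q|AW^{-1/q}\vec g|$ for every matrix $A$.

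With these repairs, your sketch becomes a complete proof of the stated bound.
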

Inspired by the proof of Theorem \ref{thm-bound-fracint}
in \cite{im19}, we obtain the following boundedness of $I_{\alpha, r}$.
\begin{theorem}\label{thm-bound-fracintloc}
Let $r\in(0,\infty)$. Suppose that $\alpha\in(0,n)$, $p\in(1,\frac{n}{\alpha})$,
and $q\in(1,\infty)$ satisfy $\frac{1}{q}=
\frac{1}{p}-\frac{\alpha}{n}$.
Then, for any $W\in\mathscr{A}^{{\rm loc}}_{p,q}(r)$,
\begin{align}\label{eq-bound-fracintloc}
\left\|I_{\alpha,r}\right\|_{L^p(W^{\frac{p}{q}}) \rightarrow L^q(W)}
\lesssim[W]_{\mathscr{A}^{{\rm loc}}_{p,q}(r)
}^{\frac{p^{\prime}}{q}(1-\frac{\alpha}{n})+\frac{1}{q^{\prime}}},
\end{align}
where the implicit positive constant is independent of
$r$ and $W$.
\end{theorem}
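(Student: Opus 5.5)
We localize in the same way as in the proof of Theorem~\ref{thm-bound-maxloc} and then apply the global estimate, Theorem~\ref{thm-bound-fracint}, to the extended weights supplied by Theorem~\ref{thm-extension}. Since the target exponent matches the global one, the argument must avoid any scale-lifting step, because Theorem~\ref{thm:Apq(1+eta)} would square the constant. So every weight we apply a global bound to must come from extending $W$ off a cube of edge length at most $r$.

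First we reduce the problem. Write $\|I_{\alpha,r}\vec f\|_{L^q(W)}=\|W^{1/q}I_{\alpha,r}(W^{-1/q}\vec g)\|_{L^q}$ with $\vec g:=W^{1/q}\vec f\in L^p$. Because the kernel $|x-y|^{\alpha-n}$ is nonnegative, we have the pointwise bound
\[
\bigl|W^{1/q}(x)I_{\alpha,r}(W^{-1/q}\vec g)(x)\bigr|\le\int_{Q(x,r)}\frac{|W^{1/q}(x)W^{-1/q}(y)\vec g(y)|}{|x-y|^{n-\alpha}}\,dy .
\]
Split the kernel region into $|x-y|_\infty\le r/4$ and $r/4<|x-y|_\infty\le r$, giving $\mathrm{I}(x)+\mathrm{II}(x)$.

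For the near part $\mathrm{I}$, tile $\mathbb{R}^n$ by cubes $Q\in\Gamma$ of edge length $r/4$. For $x\in Q$, only $y$ in the concentric cube $\widetilde Q$ of edge length $3r/4$ contribute. Since $\ell(\widetilde Q)\le r$, apply Theorem~\ref{thm-extension} to a cube of edge length $r$ containing $\widetilde Q$ to get $V\in\mathscr{A}_{p,q}$ with $V=W$ on $\widetilde Q$ and $[V]_{\mathscr{A}_{p,q}}\lesssim[W]_{\mathscr{A}^{\rm loc}_{p,q}(r)}$. This gives the identity
\[
\mathbf 1_Q(x)\,\mathrm I(x)\le\int_{\mathbb{R}^n}\frac{|V^{1/q}(x)V^{-1/q}(y)\mathbf 1_{\widetilde Q}\vec g(y)|}{|x-y|^{n-\alpha}}\,dy .
\]
Its $L^q(Q)$ norm is exactly the kind of quantity controlled in Isralowitz--Moen's proof of Theorem~\ref{thm-bound-fracint}.

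The main obstacle is here. Theorem~\ref{thm-bound-fracint} is stated for $I_\alpha$ acting on functions, with $|W^{1/q}(x)I_\alpha(W^{-1/q}\vec g)(x)|$ inside the norm, and not for the operator with the norm inside the integral. I would handle this in one of two ways. The first is to check that the proof in \cite{im19}, which proceeds via sparse domination and convex-body or reducing-operator arguments, actually bounds the positive operator $\vec g\mapsto\int|V^{1/q}(x)V^{-1/q}(y)\vec g(y)||x-y|^{\alpha-n}dy$ with the same exponent. The second is to avoid the issue by not passing the absolute value inside: keep $I_{\alpha,r}$ itself and write $\mathbf 1_Q I_{\alpha,r}(\vec h)=\mathbf 1_Q I_\alpha(\mathbf 1_{\widetilde Q}\vec h)-\mathbf 1_Q\int_{|x-y|_\infty>r}\cdots$. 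Here we first shrink $I_{\alpha,r}$'s support so that the truncated tail is handled together with part $\mathrm{II}$. Apply Theorem~\ref{thm-bound-fracint} with $V$ to the first term. Then sum over $Q\in\Gamma$ using $\ell^p\hookrightarrow\ell^q$ and the bounded overlap of $\{\widetilde Q\}$, exactly as in the proof of \eqref{eq-bound-I}. This gives the bound $[W]^{\frac{p'}{q}(1-\frac{\alpha}{n})+\frac1{q'}}_{\mathscr{A}^{\rm loc}_{p,q}(r)}$.

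For the far part, where $|x-y|_\infty\sim r$, the kernel is comparable to $r^{\alpha-n}\sim|R|^{\alpha/n-1}$ for cubes $R$ of edge length about $r$. So the far term is controlled by a one-scale averaging operator of the type $\widetilde M_{W,\alpha,\cdot}$ in \eqref{eq-def-1scalemaxope}. To stay within scale $r$, cover by cubes of edge length $r/2$: $\{y:|x-y|_\infty\le r\}$ is the union of boundedly many cubes of side $r/2$ from a fixed grid near $x$. The same Hölder argument used for \eqref{eq-bound-II} then bounds this part by $[W]^{1/q}_{\mathscr{A}^{\rm loc}_{p,q}(r)}$, with only the grid-cube side, and no larger cube, entering the weight constant. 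Finally, $[W]\ge1$ by Corollary~\ref{coro-[W]geq1}, and $\frac1q\le\frac{p'}{q}(1-\frac\alpha n)+\frac1{q'}$, so this term is dominated by the main one.
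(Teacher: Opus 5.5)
There is a genuine gap in your far part. You claim that covering $\{y:|x-y|_\infty\le r\}$ by grid cubes of side $r/2$ and repeating the H\"older argument behind \eqref{eq-bound-II} gives $[W]^{1/q}_{\mathscr{A}^{\operatorname{loc}}_{p,q}(r)}$, ``with only the grid-cube side entering the weight constant.'' That argument does not produce this.

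In \eqref{eq-bound-II}, $x$ and $y$ run over the \emph{same} cube $R$, and that is why $\fint_R[\fint_R\|W^{1/q}(x)W^{-1/q}(y)\|^{p'}\,dy]^{q/p'}\,dx$ appears. In your covering, $x$ lies in a grid cell $P$ and $y$ in a neighbouring cell $P'\neq P$. H\"older therefore gives the cross quantity $\fint_P[\fint_{P'}\|W^{1/q}(x)W^{-1/q}(y)\|^{p'}\,dy]^{q/p'}\,dx\sim\|A_P\widetilde A_{P'}\|^q$. By Lemma~\ref{lem:AQ1AQ2}, this is controlled only at the scale of a cube containing $P\cup P'$. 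Since $|x-y|_\infty$ goes up to $r$, the cell $P'$ can sit two cells away from $P$, so that hull has side $3r/2>r$.

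The loss is real. Take $n=m=1$, $P=[-\frac r2,0)$, $P'=[\frac r2,r)$, and set $w:=M$ on $[-\frac r2,-\frac{3r}{8})$, $w:=M^{-1}$ on $[\frac{7r}{8},r)$, $w:=1$ elsewhere. The two bumps are $\frac{5r}{4}$ apart, so $[w]_{A^{\operatorname{loc}}_{p,q}(r)}\le M$ exactly as in Example~\ref{exam-sharpness-power-2}. Yet the cross quantity is at least $4^{-1-q/p'}M^2$. Refining the grid does not help: with cells of side $s$, pairs at distance near $r$ force hulls of side about $r+s$. That is precisely the trade-off $(1+\frac r\varepsilon)^{n/q}[W]^{1/q}_{\mathscr{A}^{\operatorname{loc}}_{p,q}(r+\varepsilon)}$ in \eqref{eq-bound-maxloc}.

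The repair is to drop your premise that scale lifting must be avoided everywhere; it is harmful only for the near part. Bound the far part pointwise by $M_{W,\alpha,r}\vec g$: the $2^n$ cubes of side $r$ with a vertex at $x$ cover $Q(x,r)$, and the kernel is $\lesssim r^{\alpha-n}$ there. Then apply \eqref{eq-bound-maxloc-2}, which rests on Theorem~\ref{thm:Apq(1+eta)}, to get $[W]^{\max\{\frac{p'}{q}(1-\frac{\alpha}{n}),\frac2q\}}_{\mathscr{A}^{\operatorname{loc}}_{p,q}(r)}$. The target exponent is $\frac{p'}{q}(1-\frac{\alpha}{n})+\frac1{q'}=1+\frac{p'}{q^2}$, and $1+\frac{p'}{q^2}-\frac2q=\frac{(q-1)^2+p'-1}{q^2}>0$, so Corollary~\ref{coro-[W]geq1} absorbs this term. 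This is exactly how the paper handles its term $\operatorname{II}$.

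With that fix the proof goes through. Your near part is sound, and your second option is slightly cleaner than the paper's. With cells of side $r/4$, $\widetilde Q$ fits inside one cube of side $r$, so a single extension $V$ suffices. Writing $W^{1/q}(x)I_{\alpha,r}(W^{-1/q}\vec g)(x)=V^{1/q}(x)I_\alpha(V^{-1/q}\mathbf 1_{\widetilde Q}\vec g)(x)+\int_{Q(x,r)\setminus\widetilde Q}\cdots$ for $x\in Q$ lets you use Theorem~\ref{thm-bound-fracint} as stated. The paper instead bounds a positive bilinear form and has to invoke the \emph{proof} of \cite[Theorem 1.4]{im19}.

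One correction to your displayed identity: the remainder enters with a plus sign and lives in $\frac r4\le|x-y|_\infty\le r$. It is not $-\int_{|x-y|_\infty>r}$, because $\widetilde Q\subset\{y:|x-y|_\infty\le\frac r2\}$ for $x\in Q$.
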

\begin{proof}
It was proved in \cite[Proposition 3.6]{cmr16} that
the set of bounded functions on $\mathbb{R}^n$
with compact support is dense in $L^p(W)$.
Thus, in what follows, we may assume that
$\vec{f}$ is a bounded function on $\mathbb{R}^n$
with compact support. Observe that, for any $x\in\mathbb{R}^n$,
\begin{align}\label{eq-fracint-I-II}
W^{\frac{1}{q}}(x)I_{\alpha, r}\left(W^{-\frac{1}{q}}\vec{f}\right)(x)
&=\int_{Q(x,r)} \frac{W^{\frac{1}{q}}(x)W^{-\frac{1}{q}}(y)
\vec{f}(y)}{|x-y|^{n-\alpha}} \,dy\\
&=\int_{Q(x,\frac{r}{2})}\frac{W^{\frac{1}{q}}(x)W^{-\frac{1}{q}}(y)
\vec{f}(y)}{|x-y|^{n-\alpha}} \,dy+\int_{Q(x,r)\setminus Q(x,\frac{r}{2})}\cdots\nonumber\\
&=:\operatorname{I}(x)+\operatorname{II}(x).\nonumber
\end{align}
Thus, to show \eqref{eq-bound-fracintloc}, it suffices
to estimate the $L^q$-norms of
$\operatorname{I}$ and $\operatorname{II}$.

We first deal with $\operatorname{II}$ in \eqref{eq-fracint-I-II}.
Since, for any $x\in\mathbb{R}^n$ and $y\in Q(x,r)\setminus Q(x,\frac{r}{2})$,
$|x-y|\gtrsim r$ and $Q(x,r)\setminus Q(x,\frac{r}{2})$ can be covered
by $2^n$ cubes containing $x$ with edge length $r$, it follows that
\begin{align*}
\left|\operatorname{II}(x)\right|\lesssim M_{W,\alpha,r}\vec{f}(x).
\end{align*}
Using Theorem \ref{thm-bound-maxloc} with $\varepsilon:=\frac{r}{2}$
therein and using Theorem \ref{thm:Apq(1+eta)}, we conclude that
\begin{align}\label{eq-est-II-1}
\left\|\operatorname{II}\right\|_{L^q}\lesssim
\left\{[W]^{\frac{p^{\prime}}{q}(1-\frac{\alpha}{n})}
_{\mathscr{A}^{\operatorname{loc}}_{p,q}(r)}+
[W]^{\frac{2}{q}}_{\mathscr{A}^{\operatorname{loc}}_{p,q}(r)}\right\}
\left\|\vec{f}\right\|_{L^p}.
\end{align}
The assumption that $\frac1q=\frac1p-\frac{\alpha}{n}$ and
hence $q>p$ yield that
\begin{align*}
\frac{p^{\prime}}{q}\left(1-\frac{\alpha}{n}\right)
+\frac{1}{q^{\prime}}
=\frac{1}{q}\left(1+\frac{p'}{q}\right)+\frac{1}{q'}
>\frac{1}{q}\left(1+\frac{q'}{q}+\frac{q}{q'}\right)\geq\frac{3}{q}>
\frac{2}{q}.
\end{align*}
Applying this, \eqref{eq-est-II-1}, and the fact
$[W]_{\mathscr{A}^{\operatorname{loc}}_{p,q}(r)}\geq1$
in Corollary \ref{coro-[W]geq1}, we obtain
\begin{align}\label{eq-est-II}
\left\|\operatorname{II}\right\|_{L^q}\lesssim
[W]_{\mathscr{A}^{{\rm loc}}_{p,q}(r)
}^{\frac{p^{\prime}}{q}(1-\frac{\alpha}{n})+
\frac{1}{q^{\prime}}}\left\|\vec{f}\right\|_{L^p},
\end{align}
which is the desired estimate of $\operatorname{II}$.

Next, we estimate $\operatorname{I}$ in \eqref{eq-fracint-I-II}.
As in the proof of Theorem \ref{thm-bound-maxloc},
we divide $\mathbb{R}^n$ into a collection $\Gamma$ of
cubes with pairwise disjoint interiors and with edge length $\frac{r}{2}$.
Let $Q\in \Gamma$ and let $\widetilde{Q}$ be the cube with
edge length $\frac{3}{2}r$ and with the same center as $Q$.
Suppose that $\vec{g}$ is a bounded function supported in $Q$.
Since $Q(x,\frac{r}{2})\subset\widetilde Q$ for all $x\in Q$,
using the definition of local fractional integral operators
in \eqref{eq-def-fracintloc}, we conclude that
\begin{align}\label{eq-locfrac-1}
\left|\int_{Q}\left(\operatorname{I}(x),\vec{g}(x)\right)\,dx\right|
&=\left|\int_{Q}\left(W^{\frac{1}{q}}(x)I_{\alpha,\frac{r}{2}}
\left(W^{-\frac{1}{q}}\vec{f}\right)(x),\vec{g}(x)\right)\,dx\right|\\
&=\left|\int_{Q}\left(I_{\alpha,\frac{r}{2}}\left(W^{-\frac{1}{q}}
\vec{f}\right)(x),W^{\frac{1}{q}}(x)\vec{g}(x)\right)\,dx\right|\nonumber\\
&\leq\int_{Q}\int_{Q(x,\frac{r}{2})}\frac{|(W^{-\frac{1}{q}}(y)
\vec{f}(y),W^{\frac{1}{q}}(x)\vec{g}(x))|}{|x-y|^{n-\alpha}}\,dy\,dx\nonumber\\
&\leq\int_{Q}\int_{\widetilde{Q}}\frac{|(W^{-\frac{1}{q}}(y)
\vec{f}(y),W^{\frac{1}{q}}(x)\vec{g}(x))|}{|x-y|^{n-\alpha}}\,dy\,dx
=:\Omega.\nonumber
\end{align}
As in the proof of Theorem \ref{thm-bound-maxloc},
there exist cubes $\{Q_i\}_{i=1}^{2^n}$
with edge length $r$ and matrix weights
$\{W_i\}_{i=1}^{2^n}\subset\mathscr{A}_{p,q}$ such that $\widetilde{Q}=
\cup_{i=1}^{2^n} Q_i$, $Q\subset\cap_{i=1}^{2^n} Q_i$, and,
for any $i\in\{1,\dots,2^n\}$, $W_i=W$ on $Q_i$ and
\begin{align*}
\left[W_i\right]_{\mathscr{A}_{p,q}}\leq
3^{n(1+\frac{q}{p^{\prime}})}\left[W\right]_{
\mathscr{A}^{\operatorname{loc}}_{p,q}(r)}.
\end{align*}
Since $\{W_i\}_{i=1}^{2^n}\subset\mathscr{A}_{p,q}$,
to estimate $\Omega$ in \eqref{eq-locfrac-1}, applying this extension property
and the proof of \cite[Theorem 1.4]{im19}, we conclude that
\begin{align*}
\Omega&\leq\sum_{i=1}^{2^n}\int_{Q}\int_{Q_i}
\frac{|(W_i^{-\frac{1}{q}}(y)\vec{f}(y),
W_i^{\frac{1}{q}}(x)\vec{g}(x))|}{|x-y|^{n-\alpha}}\,dy\,dx\\
&=\sum_{i=1}^{2^n}\int_{\mathbb{R}^n}\int_{\mathbb{R}^n}
\frac{|(W_i^{-\frac{1}{q}}(y)\vec{f}(y)\mathbf{1}_{Q_i}(y),
W_i^{\frac{1}{q}}(x)\vec{g}(x)\mathbf{1}_{Q}
(x))|}{|x-y|^{n-\alpha}}\,dy\,dx\\
&\lesssim\sum_{i=1}^{2^n}[W_i]_{\mathscr{A}_{p,q}
}^{\frac{p^{\prime}}{q}(1-\frac{\alpha}{n})
+\frac{1}{q^{\prime}}}\left\|\vec{f}\mathbf{1}_{Q_i}\right\|_{L^p}
\left\|\vec{g}\mathbf{1}_Q\right\|_{L^{q^{\prime}}}\\
&\lesssim[W]_{\mathscr{A}^{\operatorname{loc}}_{p,q}(r)
}^{\frac{p^{\prime}}{q}(1-\frac{\alpha}{n})
+\frac{1}{q^{\prime}}}\left\|\vec{f}\mathbf{1}_{\widetilde{Q}}
\right\|_{L^p}\left\|\vec{g}\mathbf{1}_Q\right\|_{L^{q^{\prime}}}
\end{align*}
and hence
\begin{align*}
\left\|\operatorname{I}\mathbf{1}_{Q}\right\|_{L^q}
\lesssim[W]_{\mathscr{A}^{\operatorname{loc}}_{p,q}(r)
}^{\frac{p^{\prime}}{q}(1-\frac{\alpha}{n})
+\frac{1}{q^{\prime}}}\left\|\vec{f}\mathbf{1}_{\widetilde{Q}}
\right\|_{L^p}.
\end{align*}
Taking the $q$-th power on both sides
and summing over all $Q\in\Gamma$, we obtain
\begin{align}\label{eq-last-est}
\left\|\operatorname{I}\right\|^q_{L^q}
&=\int_{\mathbb{R}^n}\left|
\operatorname{I}(x)\right|^q\,dx
=\sum_{Q\in\Gamma}\int_{Q}\left|
\operatorname{I}(x)\right|^q\,dx
\lesssim[W]_{\mathscr{A}^{\operatorname{loc}}_{p,q}(r)
}^{p^{\prime}(1-\frac{\alpha}{n})+\frac{q}{q^{\prime}}}\sum_{Q\in\Gamma}
\left[\int_{\widetilde{Q}}\left|\vec{f}(x)\right|^p\,dx
\right]^{\frac{q}{p}}\\
&\lesssim[W]_{\mathscr{A}^{\operatorname{loc}}_{p,q}(r)
}^{p^{\prime}(1-\frac{\alpha}{n})+\frac{q}{q^{\prime}}}
\left[\int_{\mathbb{R}^n}\left|\vec{f}(x)\right|^p\,dx\right]^{\frac{q}{p}}
=[W]_{\mathscr{A}^{\operatorname{loc}}_{p,q}(r)
}^{p^{\prime}(1-\frac{\alpha}{n})+\frac{q}{q^{\prime}}}
\left\|\vec{f}\right\|^q_{L^p},\nonumber
\end{align}
where the last inequality follows from the embedding
$\ell^p\hookrightarrow\ell^q$ when $p\leq q$ and
the bounded overlap of the cubes $\{\widetilde Q\}_{Q\in\Gamma}$.
Taking the $q$-th root on both sides of \eqref{eq-last-est}, we obtain
\begin{align}\label{eq-est-I-1}
\left\|\operatorname{I}\right\|_{L^q}
\lesssim[W]_{\mathscr{A}^{{\rm loc}}_{p,q}(r)
}^{\frac{p^{\prime}}{q}(1-\frac{\alpha}{n})+\frac{1}{q^{\prime}}}
\left\|\vec{f}\right\|_{L^p}.
\end{align}
Combining \eqref{eq-fracint-I-II}, \eqref{eq-est-II},
and \eqref{eq-est-I-1}, we finally obtain
the desired estimate \eqref{eq-bound-fracintloc}.
This completes the proof of Theorem \ref{thm-bound-fracintloc}.
\end{proof}

\begin{remark}
Note that the exponent of the weight constant
$\frac{p^{\prime}}{q}(1-\frac{\alpha}{n})+\frac{1}{q^{\prime}}$
in \eqref{eq-bound-fracintloc} does not match the sharp exponent
in \eqref{eq-quantativebound-fracint} when reducing to the scalar-valued case.
However, as observed in the remark after \cite[Theorem 1.4]{im19},
by formally letting $\alpha=0$ and $p=q=2$,
this exponent becomes exactly $\frac{3}{2}$.
This coincides with the matrix-weighted $L^2$
bound of Calder\'{o}n--Zygmund operators
established in \cite{nptv}. Furthermore,
it has been proved in \cite{dptv24} that the exponent
$\frac{3}{2}$ is sharp, which is strictly greater
than the exponent $1$ in the scalar setting.
Therefore, it remains unknown whether
the bound in \eqref{eq-bound-fracintloc}
is sharp in the matrix-weighted setting.
\end{remark}

Finally, we present a corollary to illustrate that the exponent $\frac{p^{\prime}}{q}(1-\frac{\alpha}{n})+\frac{1}{q^{\prime}}$
in \eqref{eq-bound-fracintloc} arises directly from
the method used to prove Theorem \ref{thm-bound-fracint}.
In the scalar case, combining
the argument used in the proof of Theorem \ref{thm-bound-fracintloc} with
\eqref{eq-quantativebound-fracint}, we establish the following
quantitative weighted estimate of $I_{\alpha, r}$ whose
weight constant exponent coincides with the one
in \eqref{eq-quantativebound-fracint}.

\begin{corollary}\label{cor-bound-fracintloc-scalar}
Let $r\in(0,\infty)$. Suppose that $\alpha\in(0,n)$, $p\in(1,\frac{n}{\alpha})$,
and $q\in(1,\infty)$ satisfy $\frac{1}{q}=
\frac{1}{p}-\frac{\alpha}{n}$. 	Then, for any $w\in  A^{{\rm loc}}_{p,q}(r)$,
\begin{align}\label{eq-bound-fracintloc-scalar}
\left\|I_{\alpha,r}\right\|_{L^p(w^{\frac{p}{q}}) \rightarrow L^q(w)}
\lesssim[w]_{A^{{\rm loc}}_{p,q}(r)
}^{(1-\frac{\alpha}{n})\max(1,\frac{p'}{q})},
\end{align}
where the implicit positive constant is independent of
$r$ and $w$.
\end{corollary}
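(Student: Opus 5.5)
We follow the proof of Theorem \ref{thm-bound-fracintloc} verbatim in the scalar case $m=1$, replacing the matrix bound of Theorem \ref{thm-bound-fracint} by the sharp scalar bound \eqref{eq-quantativebound-fracint}. Set $\beta:=(1-\frac{\alpha}{n})\max(1,\frac{p'}{q})$. By density we may take $f$ bounded with compact support. Writing $f=w^{-1/q}g$ reduces matters to bounding $w^{1/q}I_{\alpha,r}(w^{-1/q}g)$ in $L^q$ by $\|g\|_{L^p}$. We split the integral over $Q(x,r)$ into the part $\operatorname{I}$ over $Q(x,\frac r2)$ and the annular part $\operatorname{II}$, exactly as in \eqref{eq-fracint-I-II}.

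For $\operatorname{II}$, we have $|x-y|\gtrsim r$, so $|\operatorname{II}(x)|\lesssim M_{w,\alpha,r}g(x)$. Then \eqref{eq-bound-maxloc-2} (i.e.\ Theorem \ref{thm-bound-maxloc} combined with Theorem \ref{thm:Apq(1+eta)}) gives the bound $[w]^{\max\{\frac{p'}{q}(1-\frac{\alpha}{n}),\frac2q\}}$. We must show this exponent is at most $\beta$. Since $[w]\ge1$ by Corollary \ref{coro-[W]geq1}, it suffices to compare exponents. First, $\frac{p'}{q}(1-\frac{\alpha}{n})\le\beta$ holds trivially. Second, $1-\frac{\alpha}{n}=\frac1{p'}+\frac1q$. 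If $q\ge p'$, then $\beta=\frac1{p'}+\frac1q\ge\frac2q$. If $q<p'$, then $\beta=\frac{p'}{q}(\frac1{p'}+\frac1q)=\frac1q+\frac{p'}{q^2}>\frac2q$. In either case $\|\operatorname{II}\|_{L^q}\lesssim[w]^{\beta}_{A^{\rm loc}_{p,q}(r)}\|g\|_{L^p}$. This comparison is the only genuinely new check relative to the matrix proof, and it is the point where care is needed: the crude $\frac2q$ term must not exceed the sharp exponent, and the computation above confirms that it does not.

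For $\operatorname{I}$, we tile $\mathbb{R}^n$ by the cubes $Q\in\Gamma$ of side $\frac r2$ and use the concentric enlargements $\widetilde Q$, exactly as before. For each $Q$ we cover $\widetilde Q$ by cubes $Q_i$ of side $r$, $i=1,\dots,2^n$, with $Q\subset\bigcap_i Q_i$. Theorem \ref{thm-extension} then provides weights $w_i\in A_{p,q}$ with $w_i=w$ on $Q_i$ and $[w_i]_{A_{p,q}}\le 3^{n\gamma}[w]_{A^{\rm loc}_{p,q}(r)}$. Since the kernel is positive, for $x\in Q$ we have pointwise
\[
|\operatorname{I}(x)|\le\sum_i w_i^{1/q}(x)\,I_\alpha\big(w_i^{-1/q}|g|\mathbf 1_{Q_i}\big)(x).
\]
Applying \eqref{eq-quantativebound-fracint} to each $w_i$, in the form $\|w_i^{1/q}I_\alpha(w_i^{-1/q}h)\|_{L^q}\lesssim[w_i]^{\beta}_{A_{p,q}}\|h\|_{L^p}$, yields
\[
\|\operatorname{I}\mathbf 1_Q\|_{L^q}\lesssim[w]^{\beta}_{A^{\rm loc}_{p,q}(r)}\|g\mathbf 1_{\widetilde Q}\|_{L^p}.
\]
No duality argument is needed here, since positivity of the kernel suffices.

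Finally, we sum the $q$-th powers over $Q\in\Gamma$. Using $\ell^p\hookrightarrow\ell^q$ for $p\le q$ together with the bounded overlap of $\{\widetilde Q\}_{Q\in\Gamma}$, as in \eqref{eq-last-est}, we obtain $\|\operatorname{I}\|_{L^q}\lesssim[w]^\beta_{A^{\rm loc}_{p,q}(r)}\|g\|_{L^p}$. Combining this with the bound for $\operatorname{II}$ gives \eqref{eq-bound-fracintloc-scalar}, with all constants independent of $r$ and $w$.
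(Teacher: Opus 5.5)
Your proposal is correct and follows the paper's proof essentially verbatim. It uses the same split into $\operatorname{I}$ and $\operatorname{II}$, bounds $\operatorname{II}$ via \eqref{eq-bound-maxloc} with $\varepsilon=\frac r2$ and Theorem \ref{thm:Apq(1+eta)} together with the comparison of exponents against $\frac2q$, and treats $\operatorname{I}$ through Theorem \ref{thm-extension} and the sharp scalar bound \eqref{eq-quantativebound-fracint}; your pointwise domination of $\operatorname{I}$ by kernel positivity is exactly the ``direct'' estimate of $\|I_{\alpha,\frac r2}f\mathbf 1_Q\|_{L^q(w)}$ that the paper substitutes for the duality pairing used in the matrix case.
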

\begin{proof}
Since $I_{\alpha,r}$ is a positive operator, without loss of
generality, we may assume that $f$ is a non-negative bounded function
on $\mathbb{R}^n$ with compact support. Based on the idea used in
\eqref{eq-fracint-I-II}, for any $x\in\mathbb{R}^n$, we have
\begin{align}\label{eq-fracint-I-II-scalar}
I_{\alpha, r} f(x)
=\int_{Q(x,\frac{r}{2})}\frac{f(y)}{|x-y|^{n-\alpha}} \,dy
+\int_{Q(x,r)\setminus Q(x,\frac{r}{2})}\cdots
=:\operatorname{I}(x)+\operatorname{II}(x).
\end{align}
To estimate $\operatorname{II}$ in \eqref{eq-fracint-I-II-scalar},
as in \eqref{eq-est-II-1}, we use \eqref{eq-bound-maxloc}
with $\varepsilon:=\frac{r}{2}$ and Theorem \ref{thm:Apq(1+eta)}
to conclude that
\begin{align}\label{eq-II-scalar}
\left\|\operatorname{II}\right\|_{L^q(w)}\lesssim
\left\{[w]^{\frac{p^{\prime}}{q}(1-\frac{\alpha}{n})}
_{A^{\operatorname{loc}}_{p,q}(r)}+
[w]^{\frac{2}{q}}_{A^{\operatorname{loc}}_{p,q}(r)}\right\}
\left\|f\right\|_{L^p(w^{\frac{p}{q}})}.
\end{align}
The assumption that $\frac1q=\frac1p-\frac{\alpha}{n}$ and
hence $q>p$ imply that
\begin{align*}
\left(\frac1q
+\frac{1}{p'}\right)\max\left(1,\frac{p'}{q}\right)\geq\frac{2}{q}.
\end{align*}
This, together with \eqref{eq-II-scalar} and the known fact that
$[w]_{A^{\operatorname{loc}}_{p,q}(r)}\geq1$ (see Corollary \ref{coro-[W]geq1}),
further implies the desired estimate of $\operatorname{II}$ in \eqref{eq-fracint-I-II-scalar}.

Finally, to estimate $\operatorname{I}$ in \eqref{eq-fracint-I-II-scalar},
it suffices to repeat the argument used in the proof of Theorem \ref{thm-bound-fracintloc}
by replacing the application of Theorem \ref{thm-bound-fracint} therein
with the sharp bound \eqref{eq-quantativebound-fracint} here to directly
estimate $\|I_{\alpha,\frac{r}{2}}f\mathbf{1}_Q\|_{L^q(w)}$ for any
$Q\in\Gamma$; we omit the details.
This completes the proof of Corollary \ref{cor-bound-fracintloc-scalar}.
\end{proof}

\begin{remark}
Let the notation be the same as in Corollary \ref{cor-bound-fracintloc-scalar}.
We observe that the exponent of the weight constant in
\eqref{eq-bound-fracintloc-scalar} is sharp.
Indeed, suppose that $f$ is a non-negative function on
$\mathbb{R}^n$ with compact support. For any $\lambda\in(0,\infty)$,
let $f_\lambda$ be as in \eqref{eq-flambda}.
By the definition
\eqref{eq-def-fracintloc} of the local fractional integral
operator and a change of variables, we find that,
for any $\lambda\in(0,\infty)$ and $x\in\mathbb{R}^n$,
\begin{align*}
I_{\alpha, r} f_\lambda(x)
&= \int_{Q(x, r)} \frac{f(\lambda y)}{|x-y|^{n-\alpha}} \, dy
= \lambda^{-n}\int_{Q(\lambda x, \lambda r)}
\frac{f(y)}{|x - \lambda^{-1}y|^{n-\alpha}}  \, dy \\
&= \lambda^{-\alpha}\int_{Q(\lambda x, \lambda r)}
\frac{f(y)}{|\lambda x - y|^{n-\alpha}} \, dy
= \lambda^{-\alpha} I_{\alpha, \lambda r} f(\lambda x).
\end{align*}
Using this scaling equality and \eqref{eq-quantativebound-fracint}
and repeating the proof of
Proposition \ref{prop-sharpness-fractmax} with
$M_{\alpha, r}$ and $M_{\alpha}$ replaced, respectively, by
$I_{\alpha, r}$ and $I_{\alpha}$, we conclude that
the exponent of the weight constant in
\eqref{eq-bound-fracintloc-scalar} is sharp.
\end{remark}

\subsection{Local Haar Square Functions}\label{s3-3}

In this subsection, we aim to characterize the
$L^p(W)$-norm for any $p\in(1,\infty)$ and $W\in\mathscr{A}^{\operatorname{loc}}_{p}$
in terms of local Haar square functions.
To this end, we first recall the
concept of Haar systems. For any dyadic interval $I$ in $\mathbb{R}$, let
\begin{align}\label{eq-Haar-1}
h_I^{(1)}=|I|^{-\frac{1}{2}} \mathbf{1}_I \text{ and } h_I^{(0)}=|I|^{-\frac{1}{2}}\left(\mathbf{1}_{I_{l}}-\mathbf{1}_{I_r}\right),
\end{align}
where $I_{l}$ and $I_{r}$ denote, respectively,
the left and the right half parts of $I$.
Let $I:=I_1\times \cdots \times I_n$ be a dyadic cube
in $\mathbb{R}^n$ and $k:=(k_1,\dots,k_n)\in\{0,1\}^n$,
where $I_1,\dots,I_n$ are dyadic intervals in $\mathbb{R}$ .
For any $x:=(x_1,\dots,x_n)\in\mathbb{R}^n$, let
\begin{align}\label{eq-Haar-n}
h_I^{(k)}(x)=\prod_{i=1}^n h_{I_i}^{(k_i)}(x_i).
\end{align}
Let
$$\operatorname{Sig}_n := \{0,1\}^n \setminus \{(1,\dots,1)\}.$$
It is well known that the set
\[
\left\{h_I^{(k)} : I\in\mathcal{D},\ k\in\operatorname{Sig}_n\right\}
\]
forms an orthonormal basis of $L^2$.
\begin{remark}\label{rmk-Haar}
By the definition of the Haar systems \eqref{eq-Haar-1}
and \eqref{eq-Haar-n}, it is easy to verify that,
for any $I\in\mathcal{D}$ and $k\in\operatorname{Sig}_n$,
\begin{itemize}
\item[{\rm (i)}] $h_I^{(k)}$ is supported in $I$;
\item[{\rm (ii)}] $\int_I h_I^{(k)}(x)\,dx = 0$;
\item[{\rm (iii)}] $h_I^{(k)}$ is constant on each dyadic subcube of $I$.
\end{itemize}
\end{remark}

Let $\vec{f}:\mathbb{R}^n\longrightarrow \mathbb{C}^m$
be a locally integrable function. For any
$I\in\mathcal{D}$ and $k\in\operatorname{Sig}_n$,
its \emph{Haar coefficient} $\vec{f}_I^{(k)}$ is defined by setting
\begin{align}\label{eq-haarcof}
\vec{f}_I^{(k)} := \int_I \vec{f}(x)h_I^{(k)}(x)\, dx,
\end{align}
where $h_I^{(k)}$ is the same as in \eqref{eq-Haar-n}.
Here, and thereafter, for any $j\in\mathbb{Z}$, let
$\mathcal{D}_j:=\{Q\in\mathcal{D}:\ \ell(Q)=2^{-j}\}$
be the set of all \emph{dyadic cubes in $\mathbb{R}^n$
at the $j$-th level}.
Based on Haar coefficients, we next
recall the concept of Haar square functions.
Suppose that $p\in(1,\infty)$, $W$ is a matrix weight,
and $j\in\mathbb{Z}$.
For any locally integrable vector-valued function
$\vec{f}:\mathbb{R}^n\longrightarrow \mathbb{C}^m$,
its \emph{local Haar square function}  $S_{W,p,j}\vec{f}$ is defined by
setting, for any $x\in\mathbb{R}^n$,
\begin{align}\label{eq-def-locHaarSquare}
S_{W,p,j}\vec{f}(x):=
\left[\sum_{l=j}^{\infty}\sum_{I \in \mathcal{D}_l}\sum_{k\in\operatorname{Sig}_n}
\frac{|W^{\frac{1}{p}}(x)\vec{f}_I^{(k)}|^2}{|I|}
\mathbf{1}_I(x)\right]^{\frac{1}{2}}.
\end{align}

In \eqref{eq-def-locHaarSquare}, if we take the first summation
over all $l\in\mathbb{Z}$, we obtain the corresponding
well-known \emph{Haar square function} $S_{W,p}$.
The Haar square function $S_{W,p}$ and its variants
have been extensively investigated in the literature
(see, for example, \cite{DKPS,HPV,isr20,v97}).
Before presenting our main results, we first recall
the quantitative weighted norm estimates for $S_{W,p}$ in the global setting.
More precisely, the particular case $p=2$ of \eqref{eq-HAAR-right}
was established by Hyt\"onen et al. in \cite[Theorem 1]{HPV}.
Later, Isralowitz in \cite[Theorem 1]{isr20} proved
the quantitative $L^p(V)$ bounds of $S_{W,p}$ and
showed the sharpness of this bound in the range $p\in(1,2]$.
Moreover, \eqref{eq-HAAR-right} is exactly a
particular case of \cite[Theorem 1]{isr20}.
Using \eqref{eq-HAAR-right} and duality,
\eqref{eq-HAAR-left} was proved by
Domelevo et al. in \cite[Corollary 5.6]{DKPS}.

\begin{theorem}\label{thm-HAAR-Ap}
Let $p\in(1,\infty)$ and $W\in\mathscr{A}_p$.
Then, for any $\vec{f} \in L^p(W)$,
\begin{align}\label{eq-HAAR-right}
\left\|S_{W,p}\vec{f}\right\|_{L^p}
\lesssim\left[W\right]_{\mathscr{A}_p}^{\gamma(p)}
\left\|\vec{f}\right\|_{L^p(W)}
\end{align}
and
\begin{align}\label{eq-HAAR-left}
\left\|\vec{f}\right\|_{L^p(W)}\lesssim
\left[W\right]_{\mathscr{A}_p}^{\frac{\gamma(p')}{p-1}}
\left\|S_{W,p}\vec{f}\right\|_{L^p}
\end{align}
hold, where all the implicit positive constants
are independent of $\vec{f}$ and $W$ and
\begin{align}\label{eq-gamma-p}
\gamma(p):=
\begin{cases}
\displaystyle\frac{1}{p-1}
& \text{if } p\in(1,2],\\
\displaystyle\frac{1}{2}+\frac{1}{p(p-1)}
& \text{if } p\in(2,\infty).
\end{cases}
\end{align}
\end{theorem}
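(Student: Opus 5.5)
The plan is to take \eqref{eq-HAAR-right} from the literature and to derive \eqref{eq-HAAR-left} from it by duality. The work lies in the duality step and in tracking the exponent of the weight constant.

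\emph{Step 1: the upper bound.} I would obtain \eqref{eq-HAAR-right} directly as the one-weight case of \cite[Theorem 1]{isr20}, taking both weights there equal to $W$. For $p=2$ this is \cite[Theorem 1]{HPV}. The only thing to check is that the square function used in \cite{isr20} agrees with $S_{W,p}$ up to constants depending on $n$ and $m$. It is built from the same Haar coefficients $\vec f_I^{(k)}$, $k\in\operatorname{Sig}_n$, with the weight $W^{\frac1p}(x)$ applied pointwise.

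\emph{Step 2: set up the duality.} Let $W':=W^{-\frac{p'}{p}}$, so that $(W')^{\frac1{p'}}=W^{-\frac1p}$. The global analogue of Corollary \ref{cor-Ap-Ap'} (the same proof with $r=\infty$, $q=p$) gives $W'\in\mathscr{A}_{p'}$ and $[W']_{\mathscr{A}_{p'}}^{\frac1{p'}}\sim[W]_{\mathscr{A}_p}^{\frac1p}$. Hence
\begin{align*}
[W']_{\mathscr{A}_{p'}}\sim[W]_{\mathscr{A}_p}^{\frac{1}{p-1}}.
\end{align*}
The pairing $\langle \vec f,\vec g\rangle=\int(\vec f,\vec g)\,dx$ identifies the dual of $L^p(W)$ with $L^{p'}(W')$ isometrically, since $(\vec f,\vec g)=(W^{\frac1p}\vec f,W^{-\frac1p}\vec g)$. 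It therefore suffices to bound $|\langle\vec f,\vec g\rangle|$ for $\vec g$ bounded with compact support, which is a dense class by \cite[Proposition 3.6]{cmr16}.

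\emph{Step 3: expand and estimate.} Expanding in the Haar basis gives
\begin{align*}
\langle\vec f,\vec g\rangle=\sum_{I\in\mathcal D}\sum_{k\in\operatorname{Sig}_n}\left(\vec f_I^{(k)},\vec g_I^{(k)}\right).
\end{align*}
To justify this, I would first take $\vec f$ with finitely many nonzero Haar coefficients. For $p<\infty$ the coarse averages tend to zero, so no constant term appears, and a limiting argument then removes the restriction. For each term, insert the identity
\begin{align*}
\left(\vec f_I^{(k)},\vec g_I^{(k)}\right)=\frac{1}{|I|}\int_I\left(W^{\frac1p}(x)\vec f_I^{(k)},W^{-\frac1p}(x)\vec g_I^{(k)}\right)dx.
\end{align*}
Applying Cauchy--Schwarz to the sum inside the integral, and then H\"older's inequality, gives
\begin{align*}
|\langle\vec f,\vec g\rangle|\le\int S_{W,p}\vec f\,S_{W',p'}\vec g\,dx\le\|S_{W,p}\vec f\|_{L^p}\,\|S_{W',p'}\vec g\|_{L^{p'}}.
\end{align*}
Now apply \eqref{eq-HAAR-right} with $W'$ and $p'$ in place of $W$ and $p$:
\begin{align*}
\|S_{W',p'}\vec g\|_{L^{p'}}\lesssim[W']_{\mathscr{A}_{p'}}^{\gamma(p')}\|\vec g\|_{L^{p'}(W')}\lesssim[W]_{\mathscr{A}_p}^{\frac{\gamma(p')}{p-1}}\|\vec g\|_{L^{p'}(W')}.
\end{align*}
Taking the supremum over $\|\vec g\|_{L^{p'}(W')}\le1$ yields \eqref{eq-HAAR-left}.

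The main obstacle is the justification in Step 3: the convergence of the Haar expansion of the pairing for general $\vec f\in L^p(W)$, and the vanishing of the averages at the top scales. I would handle it by working with finite Haar sums and passing to the limit. This needs monotone convergence for $S_{W,p}\vec f$, together with density of finite Haar sums in $L^{p'}(W')$, which follows from the boundedness in Step 1 applied to $W'$.
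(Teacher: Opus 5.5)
Your proposal is correct and follows the paper's route: the paper does not reprove this theorem, but takes \eqref{eq-HAAR-right} from \cite[Theorem 1]{isr20} and obtains \eqref{eq-HAAR-left} from it by the same duality argument you give (this is \cite[Corollary 5.6]{DKPS}), with the same bookkeeping $[W']_{\mathscr{A}_{p'}}\sim[W]_{\mathscr{A}_p}^{\frac{1}{p-1}}$ for $W':=W^{-\frac{p'}{p}}$. One small correction to your last paragraph: density of finite Haar sums in $L^{p'}(W')$ does not follow from the upper bound for $S_{W',p'}$, but you do not need it. Instead, take both $\vec f$ and $\vec g$ bounded with compact support, so that the Haar expansion of $\langle\vec f,\vec g\rangle$ is just Parseval in $L^2$, and then pass to general $\vec f\in L^p(W)$ using \cite[Proposition 3.6]{cmr16}, the pointwise bound $|S_{W,p}\vec f-S_{W,p}\vec f_j|\le S_{W,p}(\vec f-\vec f_j)$, and \eqref{eq-HAAR-right} for $W$.
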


To establish the variant of Theorem \ref{thm-HAAR-Ap} for
local matrix weights, we also need the following concept.
For any $j\in\mathbb{Z}$ and any locally integrable function
$\vec{f}:\mathbb{R}^n\longrightarrow \mathbb{C}^m$, let
\begin{align}\label{eq-Ej}
E_j\vec{f}:=\sum_{Q \in \mathcal{D}_j}
\mathbf{1}_Q\fint_Q \vec{f}(x)\,dx.
\end{align}
The following theorem provides the local variant of
Theorem \ref{thm-HAAR-Ap}. Its proof follows a
localization argument based on extending
local matrix weights to global ones and
applying the global Haar square function estimates.

\begin{theorem}\label{thm-HAAR-Aploc}
Let $j\in\mathbb{Z}$, $p\in(1,\infty)$, and
$W\in\mathscr{A}^{\operatorname{loc}}_p(2^{-j})$.
Then, for any  $\vec{f} \in L^p(W)$,
\begin{align}\label{eq-HAAR-Aploc-2}
\left\|S_{W,p,j}\vec{f}\right\|_{L^p}
+\left\|E_j \vec{f}\right\|_{L^p(W)}
\lesssim[W]_{\mathscr{A}^{\operatorname{loc}}
_p(2^{-j})}^{\gamma(p)}
\left\|\vec{f}\right\|_{L^p(W)}
\end{align}
and
\begin{align}\label{eq-HAAR-Aploc-1}
\left\|\vec{f}\right\|_{L^p(W)}
\lesssim[W]_{\mathscr{A}^{\operatorname{loc}}
_p(2^{-j})}^{\frac{\gamma(p')}{p-1}}
\left[\left\|S_{W,p,j}\vec{f}\right\|_{L^p}
+\left\|E_j \vec{f}\right\|_{L^p(W)}\right]
\end{align}
hold, where $E_j$ is as in \eqref{eq-Ej},
$\gamma(p)$ is as in \eqref{eq-gamma-p},
and all the implicit positive constants
are independent of $\vec{f}$, $W$, and $j$.
\end{theorem}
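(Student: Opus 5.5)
The plan is to localize to dyadic cubes of level $j$ and then invoke Theorem \ref{thm-HAAR-Ap} with the extended weights from Theorem \ref{thm-extension}. Throughout, $[W]$ abbreviates $[W]_{\mathscr{A}^{\operatorname{loc}}_p(2^{-j})}$.

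For each $Q\in\mathcal{D}_j$ we have $\ell(Q)=2^{-j}$. Theorem \ref{thm-extension} (with $p=q$ and $r=2^{-j}$) then gives $V_Q\in\mathscr{A}_p$ with $V_Q=W$ on $Q$ and $[V_Q]_{\mathscr{A}_p}\le 3^{np}[W]$. The key structural point is that every dyadic cube $I$ with $\ell(I)\le 2^{-j}$ lies in exactly one $Q\in\mathcal{D}_j$. So on $Q$ the local square function only involves Haar coefficients of $\vec f\mathbf 1_Q$:
\[
S_{W,p,j}\vec f\,\mathbf 1_Q=S_{V_Q,p,j}(\vec f\mathbf 1_Q)\,\mathbf 1_Q .
\]
Let $\vec g_Q:=\vec f\mathbf 1_Q-\mathbf 1_Q\fint_Q\vec f$. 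This function has mean zero on $Q$ and is supported in $Q$, so all its Haar coefficients at levels $l<j$ vanish. Also $\vec g_Q$ has the same level-$\ge j$ coefficients as $\vec f\mathbf 1_Q$, since $h_I^{(k)}$ has mean zero. Hence
\[
S_{W,p,j}\vec f\,\mathbf 1_Q=S_{V_Q,p}\vec g_Q .
\]

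\textbf{Upper bound \eqref{eq-HAAR-Aploc-2}.} First, by \eqref{eq-HAAR-right}, $\|S_{W,p,j}\vec f\,\mathbf 1_Q\|_{L^p}\lesssim [W]^{\gamma(p)}\|\vec g_Q\|_{L^p(V_Q)}$. Second, $\|\vec g_Q\|_{L^p(V_Q)}\le \|\vec f\mathbf 1_Q\|_{L^p(W)}+\|E_j\vec f\,\mathbf 1_Q\|_{L^p(W)}$. Third, I bound the average term. By H\"older's inequality and the definition of the $\mathscr{A}_p^{\rm loc}$ constant,
\[
\int_Q\Bigl|W^{\frac1p}(x)\fint_Q\vec f\Bigr|^p\,dx\le \fint_Q\Bigl[\fint_Q\|W^{\frac1p}(x)W^{-\frac1p}(y)\|^{p'}dy\Bigr]^{\frac p{p'}}dx\int_Q|W^{\frac1p}\vec f|^p ,
\]
which is at most $[W]\,\|\vec f\mathbf 1_Q\|^p_{L^p(W)}$. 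This gives $\|E_j\vec f\|_{L^p(W)}\le[W]^{1/p}\|\vec f\|_{L^p(W)}$. Since $\gamma(p)\ge 1/p$ in both cases of \eqref{eq-gamma-p} (e.g.\ $1/(p-1)>1/p$) and $[W]\ge1$ by Corollary \ref{coro-[W]geq1}, this power is absorbed into $[W]^{\gamma(p)}$. Finally, taking $p$-th powers and summing over the disjoint family $\mathcal{D}_j$ yields \eqref{eq-HAAR-Aploc-2}.

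\textbf{Lower bound \eqref{eq-HAAR-Aploc-1}.} Write $\vec f\mathbf 1_Q=\vec g_Q+E_j\vec f\,\mathbf 1_Q$. Applying \eqref{eq-HAAR-left} to $\vec g_Q$ with weight $V_Q$ gives
\[
\|\vec g_Q\|_{L^p(W)}=\|\vec g_Q\|_{L^p(V_Q)}\lesssim [W]^{\frac{\gamma(p')}{p-1}}\|S_{V_Q,p}\vec g_Q\|_{L^p}.
\]
By the support identity above, the right-hand norm equals $\|S_{W,p,j}\vec f\,\mathbf 1_Q\|_{L^p}$. Then
\[
\|\vec f\mathbf 1_Q\|_{L^p(W)}\lesssim [W]^{\frac{\gamma(p')}{p-1}}\|S_{W,p,j}\vec f\,\mathbf 1_Q\|_{L^p}+\|E_j\vec f\,\mathbf 1_Q\|_{L^p(W)} ,
\]
and since $[W]\ge1$ the second term also carries the factor $[W]^{\frac{\gamma(p')}{p-1}}$. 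Summing over $Q$ gives \eqref{eq-HAAR-Aploc-1}.

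\textbf{Main obstacle.} The delicate step is justifying that \eqref{eq-HAAR-left} applies to $\vec g_Q$, that is, that $\vec g_Q\in L^p(V_Q)$ and that no coarse-scale Haar components remain. Mean zero plus compact support in $Q$ kills all coefficients at levels $l<j$. The remaining point is the $L^p(V_Q)$ membership and convergence of the Haar expansion in $L^p(V_Q)$ for a compactly supported function, which I would take from the global theory (density of bounded compactly supported functions, \cite[Proposition 3.6]{cmr16}). A second point to check is that $\vec f\mathbf 1_Q\in L^1(Q)$, so the averages make sense; this follows from the H\"older estimate above.
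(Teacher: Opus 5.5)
The square-function half of your upper bound \eqref{eq-HAAR-Aploc-2} does not give the stated exponent. Chaining your first three steps gives
\[
\bigl\|S_{W,p,j}\vec f\,\mathbf 1_Q\bigr\|_{L^p}\lesssim [W]^{\gamma(p)}\Bigl(\|\vec f\mathbf 1_Q\|_{L^p(W)}+\|E_j\vec f\,\mathbf 1_Q\|_{L^p(W)}\Bigr)\lesssim [W]^{\gamma(p)+\frac1p}\|\vec f\mathbf 1_Q\|_{L^p(W)}.
\]
So you obtain the exponent $\gamma(p)+\frac1p$, not $\gamma(p)$; for $p=2$ this is $[W]^{3/2}$ instead of $[W]$. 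Your remark that $\gamma(p)\ge\frac1p$ absorbs $[W]^{1/p}$ only for the standalone term $\|E_j\vec f\|_{L^p(W)}$. In the square-function estimate the two powers multiply.

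This is not loose bookkeeping. The step $\|\vec g_Q\|_{L^p(W)}\lesssim\|\vec f\mathbf 1_Q\|_{L^p(W)}$ costs the norm of $\vec f\mathbf 1_Q\mapsto\vec g_Q$ on $L^p(W)$. That norm is at least the norm of the averaging map minus one, which is comparable to $[W]^{1/p}$ on cubes where the local characteristic is nearly attained.

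The fix is to not subtract the mean in this direction. For $x\in Q$, every dyadic $I\ni x$ with $\ell(I)\le 2^{-j}$ lies in $Q$ and satisfies $(\vec f\mathbf 1_Q)_I^{(k)}=\vec f_I^{(k)}$. Hence $[S_{W,p,j}\vec f(x)]^2$ is a sub-sum of $[S_{V_Q,p}(\vec f\mathbf 1_Q)(x)]^2$. Applying \eqref{eq-HAAR-right} to $\vec f\mathbf 1_Q$ then gives $\|S_{W,p,j}\vec f\,\mathbf 1_Q\|_{L^p}\lesssim[W]^{\gamma(p)}\|\vec f\mathbf 1_Q\|_{L^p(W)}$ with no loss. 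This is exactly the paper's argument.

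Your lower bound \eqref{eq-HAAR-Aploc-1} is correct, and it takes a genuinely different and cleaner route than the paper. The paper applies \eqref{eq-HAAR-left} to $\vec f\mathbf 1_Q$ itself, so it must control the coarse-scale part of $S_{V,p}(\vec f\mathbf 1_Q)$ coming from the dyadic ancestors $I\supsetneq Q$. It does this with the specific reflection-periodic structure of the extension, via identity \eqref{eq-int-R-Q}, plus a geometric series over the ancestors $I_h$, showing that this tail is comparable to $\int_Q|W^{1/p}E_j\vec f|^p$.

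Subtracting the mean annihilates all coarse-scale coefficients. You therefore need only $V_Q=W$ on $Q$ and $[V_Q]\lesssim[W]$, for any extension whatsoever. The term $\|E_j\vec f\,\mathbf 1_Q\|_{L^p(W)}$ enters through the triangle inequality at no cost in the weight. This is why mean subtraction is harmless in this direction, where the $E_j$ term already sits on the right-hand side, but lossy in the other.

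Your stated ``main obstacle'' is not an issue. \eqref{eq-HAAR-left} holds for every function in $L^p(V_Q)$, and $\vec g_Q\in L^p(V_Q)$ follows from your H\"older estimate.
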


\begin{proof}
We first prove \eqref{eq-HAAR-Aploc-1}.
Without loss of generality, we may assume that $\vec{f}$
is a bounded function on $\mathbb{R}^n$ with compact support
because the set of bounded functions on $\mathbb{R}^n$
with compact support is dense in $L^p(W)$
(see \cite[Proposition 3.6]{cmr16}).
Let $Q\in\mathcal{D}_j$. By (i) and (iii) of Remark \ref{rmk-Haar},
we find that the Haar coefficients of $\vec{f}\mathbf{1}_Q$ can be computed
as follows, for any $I\in\mathcal{D}$ and $k\in\operatorname{Sig}_n$,
\begin{align}\label{eq-coff-f1Q}
\left(\vec{f}\mathbf{1}_Q\right)_I^{(k)}=
\begin{cases}
\vec{f}_I^{(k)}
& \text{if } I\subset Q,\\
0
& \text{if } I\cap Q=\emptyset,\\
h_I^{(k)}(x_Q)\int_{Q}\vec{f}(x)\,dx
& \text{if } Q\subsetneqq I,
\end{cases}
\end{align}
where $x_Q$ is the lower-left corner of $Q$.
From Theorem \ref{thm-extension}, it follows that there
exists $V\in\mathscr{A}_p$ such that
\begin{align}\label{eq-V=W}
V=W\text{ on }Q\text{ and }[V]_{\mathscr{A}_p}
\leq  3^{np}[W]_{\mathscr{A}^{\operatorname{loc}}_p(2^{-j})}.
\end{align}
Applying Theorem \ref{thm-HAAR-Ap} to $\vec{f}\mathbf{1}_Q$ and $V$ and
applying \eqref{eq-coff-f1Q} and \eqref{eq-V=W}, we obtain
\begin{align}\label{eq-f1Q-1}
\left\|\vec{f}\mathbf{1}_Q\right\|^p_{L^p(W)}
&=\left\|\vec{f}\mathbf{1}_Q\right\|^p_{L^p(V)}\\
&\lesssim\left[V\right]_{\mathscr{A}_p}^{\frac{\gamma(p')p}{p-1}}
\int_{\mathbb{R}^n}\left[\sum_{
\genfrac{}{}{0pt}{}{I\in\mathcal{D}}{I\subset Q}}
\sum_{k\in\operatorname{Sig}_n}
\frac{|V^{\frac{1}{p}}(x)\vec{f}_I^{(k)}|^2}{|I|}
\mathbf{1}_I(x)\right]^{\frac{p}{2}}\,dx\nonumber\\
&\quad+\left[V\right]_{\mathscr{A}_p}^{\frac{\gamma(p')p}{p-1}}
\int_{\mathbb{R}^n}\left[\sum_{
\genfrac{}{}{0pt}{}{I\in\mathcal{D}}{Q  \subsetneqq I}}
\sum_{k\in\operatorname{Sig}_n}
\frac{|V^{\frac{1}{p}}(x) h_I^{(k)}(x_Q)\int_{Q}\vec{f}(y)\,dy|^2}{|I|}
\mathbf{1}_I(x)\right]^{\frac{p}{2}}\,dx\nonumber\\
&=\left[V\right]_{\mathscr{A}_p}^{\frac{\gamma(p')p}{p-1}}
\int_{Q}\left[S_{V,p,j}\vec{f}(x)\right]^p\,dx\nonumber\\
&\quad+\left[V\right]_{\mathscr{A}_p}^{\frac{\gamma(p')p}{p-1}}
\int_{\mathbb{R}^n}\left[\sum_{
\genfrac{}{}{0pt}{}{I\in\mathcal{D}}{Q  \subsetneqq I}}
\sum_{k\in\operatorname{Sig}_n} \frac{|V^{\frac{1}{p}}(x)
\int_{Q}\vec{f}(y)\,dy|^2}{|I|^2}
\mathbf{1}_I(x)\right]^{\frac{p}{2}}\,dx.\nonumber
\end{align}
Next, we estimate the
last integral in \eqref{eq-f1Q-1}.
To this end, for any $h\in\mathbb{Z}_+$,
let $I_h$ be  the unique dyadic cube in
$\mathcal D_{j-h}$ containing $Q$.
Since $\{I_h\}_{h\in\mathbb{Z}_+}$ is a nested family
and $|I_h|=2^{hn}|Q|$,
we deduce that, for any $h\in\mathbb{Z}_+$ and
$x\in I_{h}\setminus I_{h-1}$ (where $I_{-1}:=\emptyset$),
\begin{align}\label{eq-Sum}
&\sum_{\genfrac{}{}{0pt}{}{I\in\mathcal{D}}{Q  \subsetneqq I}}
\sum_{k\in\operatorname{Sig}_n}
\frac{|V^{\frac{1}{p}}(x)\int_{Q}\vec{f}(y)\,dy|^2}{|I|^2}
\mathbf{1}_I(x)\\
&\quad=(2^n-1)\left|V^{\frac{1}{p}}(x)
\int_{Q}\vec{f}(y)\,dy\right|^2
\sum_{l=h}^{\infty}\frac{1}{|I_l|^2}\nonumber\\
&\quad\sim\frac{2^n-1}{2^{2hn}|Q|^2}
\left|V^{\frac{1}{p}}(x)
\int_{Q}\vec{f}(y)\,dy\right|^2.\nonumber
\end{align}

Recall that the proof of Theorem \ref{thm-extension}
guarantees that the new matrix weight $V$ satisfies,
for any $R\in\mathcal{D}_j$ and $\vec{z}\in\mathbb{C}^m$,
\begin{align}\label{eq-int-R-Q}
\int_{R}\left|V^{\frac{1}{p}}(x)\vec{z}\right|^p\,dx
=\int_{Q}\left|W^{\frac{1}{p}}(x)\vec{z}\right|^p\,dx.
\end{align}
By \eqref{eq-Sum} and \eqref{eq-int-R-Q}, we find that the
last integral in \eqref{eq-f1Q-1} can be estimated as follows
\begin{align*}
&\int_{\mathbb{R}^n}\left[\sum_{\genfrac{}{}{0pt}{}{I\in\mathcal{D}}{Q\subsetneqq I}}
\sum_{k\in\operatorname{Sig}_n} \frac{|V^{\frac{1}{p}}(x)
\int_{Q}\vec{f}(y)\,dy|^2}{|I|^2}
\mathbf{1}_I(x)\right]^{\frac{p}{2}}\,dx\\
&\quad=\sum_{h\in\mathbb{Z}_+}\int_{I_h\setminus I_{h-1}}
\left[\sum_{\genfrac{}{}{0pt}{}{I\in\mathcal{D}}{Q\subsetneqq I}}
\sum_{k\in\operatorname{Sig}_n} \frac{|V^{\frac{1}{p}}(x)
\int_{Q}\vec{f}(y)\,dy|^2}{|I|^2}
\mathbf{1}_I(x)\right]^{\frac{p}{2}}\,dx\\
&\quad\sim\sum_{h\in\mathbb{Z}_+}\left(\frac{2^n-1}{2^{2hn}|Q|^2}
\right)^{\frac{p}{2}}\int_{I_h\setminus I_{h-1}}
\left|V^{\frac{1}{p}}(x)
\int_{Q}\vec{f}(y)\,dy\right|^p\,dx\\
&\quad\sim\sum_{h\in\mathbb{Z}_+}
\left(\frac{1}{2^{2hn}|Q|^2}\right)^{\frac p2}
\frac{|I_h\setminus I_{h-1}|}{|Q|}\int_{Q}
\left|W^{\frac{1}{p}}(x)\int_{Q}\vec{f}(y)\,dy\right|^p\,dx\\
&\quad\sim\int_{Q}
\left|W^{\frac{1}{p}}(x)\fint_{Q}\vec{f}(y)\,dy\right|^p\,dx
=\int_{Q}\left|W^{\frac{1}{p}}(x)E_j\vec{f}(x)\right|^p\,dx.
\end{align*}
This, together with \eqref{eq-f1Q-1} and \eqref{eq-V=W},
further implies that $S_{W,p,j}
\vec{f}=S_{V,p,j}\vec{f}$ on $Q$ and
\begin{align*}
\left\|\vec{f}\mathbf{1}_Q\right\|^p_{L^p(W)}
\lesssim[W]_{\mathscr{A}^{\operatorname{loc}}
_p(2^{-j})}^{\frac{\gamma(p')p}{p-1}}\left[\int_{Q}\left[S_{W,p,j}
\vec{f}(x)\right]^p\,dx+\int_{Q}\left|W^{\frac{1}{p}}(x)
E_j\vec{f}(x)\right|^p\,dx\right].
\end{align*}
Summing over all dyadic cubes in $\mathcal{D}_j$ on both sides,
we conclude that
\begin{align*}
\left\|\vec{f}\right\|^p_{L^p(W)}
&=\sum_{Q\in\mathcal{D}_j}\left\|\vec{f}\mathbf{1}_Q\right\|^p_{L^p(W)}\\
&\lesssim[W]^{\frac{\gamma(p')p}{p-1}}_{\mathscr{A}^{\operatorname{loc}}_p(2^{-j})}
\left\{\int_{\mathbb{R}^n}\left[S_{W,p,j}\vec{f}(x)\right]^p\,dx+
\int_{\mathbb{R}^n}\left|W^{\frac{1}{p}}(x)
E_j\vec{f}(x)\right|^p\,dx\right\}
\end{align*}
and hence \eqref{eq-HAAR-Aploc-1} holds.

Finally, we prove \eqref{eq-HAAR-Aploc-2}.
Let $Q\in\mathcal{D}_j$ and $V\in\mathscr{A}_p$
be the same as in \eqref{eq-V=W}.
From \eqref{eq-coff-f1Q} and the definition of the local Haar square function
in \eqref{eq-def-locHaarSquare},
we deduce that, for any $x\in Q$,
\begin{align*}
\left[S_{W,p,j}\vec{f}(x)\right]^2&=
\sum_{\genfrac{}{}{0pt}{}{I\in\mathcal{D}}{I\subset Q}}
\sum_{k\in\operatorname{Sig}_n}
\frac{|W^{\frac{1}{p}}(x)\vec{f}_I^{(k)}|^2}{|I|}
\mathbf{1}_I(x)\\
&\leq\sum_{\genfrac{}{}{0pt}{}{I\in\mathcal{D}}{I\subset Q}}\sum_{k\in\operatorname{Sig}_n}
\frac{|W^{\frac{1}{p}}(x)(\vec{f}\mathbf{1}_Q)_I^{(k)}|^2}{|I|} \mathbf{1}_I(x)
\leq\left[S_{V,p}\left(\vec{f}\mathbf{1}_Q\right)(x)\right]^2.
\end{align*}
From this, \eqref{eq-HAAR-right}, and \eqref{eq-V=W}, we deduce that
\begin{align*}
\int_{Q}\left[S_{W,p,j}\vec{f}(x)\right]^p\,dx
&\leq\int_{Q}\left[S_{V,p}\left(\vec{f}\mathbf{1}_Q\right)(x)\right]^p\,dx\\
&\lesssim[V]_{\mathscr{A}_p}^{\gamma(p)p}
\left\|\vec{f}\mathbf{1}_Q\right\|^p_{L^p(V)}
\lesssim[W]_{\mathscr{A}^{\operatorname{loc}}_p(2^{-j})}^{\gamma(p)p}
\left\|\vec{f}\mathbf{1}_Q\right\|^p_{L^p(W)}.
\end{align*}
Summing over all dyadic cubes in $\mathcal{D}_j$
and taking the $p$-th root on both sides, we obtain
\begin{align*}
\left\|S_{W,p,j}\vec{f}\right\|_{L^p}
\lesssim[W]_{\mathscr{A}^{\operatorname{loc}}
_p(2^{-j})}^{\gamma(p)}
\left\|\vec{f}\right\|_{L^p(W)}.
\end{align*}
To finish the present proof, it suffices to show
\begin{align}\label{eq-Ejf}
\left\|E_j \vec{f}\right\|_{L^p(W)}\lesssim
[W]_{\mathscr{A}^{\operatorname{loc}}_p(2^{-j})}^{\gamma(p)}
\left\|\vec{f}\right\|_{L^p(W)}.
\end{align}
For this purpose, using H\"{o}lder's inequality and \eqref{eq-locApq},
we conclude that
\begin{align*}
\int_{Q}\left|W^{\frac{1}{p}}(x)E_j\vec{f}(x)\right|^p\,dx
&=\int_{Q}\left|W^{\frac{1}{p}}(x)\fint_Q\vec{f}(y)\,dy\right|^p\,dx
\leq\int_{Q}\left[\fint_Q\left|W^{\frac{1}{p}}(x)
\vec{f}(y)\right|\,dy\right]^p\,dx\\
&\leq\int_{Q}\left[\fint_Q\left\|W^{\frac{1}{p}}(x)W^{-\frac{1}{p}}(y)\right\|
\left|W^{\frac{1}{p}}(y)\vec{f}(y)\right|\,dy\right]^p\,dx\\
&\leq\fint_{Q}\left[\fint_Q\left\|W^{\frac{1}{p}}(x)W^{-\frac{1}{p}}(y)
\right\|^{p^{\prime}}\,dy\right]^{\frac{p}{p^{\prime}}}\,dx
\int_{Q}\left|W^{\frac{1}{p}}(y)\vec{f}(y)\right|^p\,dy\\
&\leq[W]_{\mathscr{A}^{\operatorname{loc}}_p(2^{-j})}
\int_{Q}\left|W^{\frac{1}{p}}(y)\vec{f}(y)\right|^p\,dy.
\end{align*}
Summing over all dyadic cubes $Q\in\mathcal{D}_j$, we obtain
\begin{align*}
\left\|E_j \vec{f}\right\|^p_{L^p(W)}&=\sum_{Q\in\mathcal{D}_j}
\int_{Q}\left|W^{\frac{1}{p}}(x)E_j\vec{f}(x)\right|^p\,dx\\
&\leq[W]_{\mathscr{A}^{\operatorname{loc}}_p(2^{-j})}
\sum_{Q\in\mathcal{D}_j}\int_{Q}\left|W^{\frac{1}{p}}(y)\vec{f}(y)\right|^p\,dy\\
&=[W]_{\mathscr{A}^{\operatorname{loc}}_p(2^{-j})}
\int_{\mathbb{R}^n}\left|W^{\frac{1}{p}}(y)\vec{f}(y)\right|^p\,dy,
\end{align*}
which further implies that
\begin{align*}
\left\|E_j \vec{f}\right\|_{L^p(W)}\leq
[W]_{\mathscr{A}^{\operatorname{loc}}_p(2^{-j})}^{\frac{1}{p}}
\left\|\vec{f}\right\|_{L^p(W)}\leq
[W]_{\mathscr{A}^{\operatorname{loc}}_p(2^{-j})}^{\gamma(p)}
\left\|\vec{f}\right\|_{L^p(W)}
\end{align*}
and hence \eqref{eq-Ejf} holds.
This completes the proof of Theorem \ref{thm-HAAR-Aploc}.
\end{proof}

In the study of matrix-weighted function spaces,
the averaging spaces associated with reducing operators
are easier to deal with and therefore particularly useful,
as the reducing operators satisfy doubling properties
(see \cite{BHYY:I, fr21, ro03, v97}
for applications of this approach).
Thus, the Haar square function associated with
reducing operators also attracts
considerable interest (see, for example, \cite{DKPS,isr21,nt96,v97}).
Suppose that $p\in(1,\infty)$, $W$ is a matrix weight,
and $\{A_I\}_{I\in\mathcal{D}}$ is a sequence of reducing operators
of order $p$ for $W$.  For any locally integrable vector-valued function
$\vec{f}:\mathbb{R}^n\longrightarrow \mathbb{C}^m$ ,
its \emph{Haar square function}  $\widetilde{S}_{W,p}\vec{f}$ is defined by
setting, for any $x\in\mathbb{R}^n$,
\begin{align}\label{eq-def-HaarSquareTilde}
\widetilde{S}_{W,p}\vec{f}(x):=
\left[\sum_{I \in \mathcal{D}}\sum_{k\in\operatorname{Sig}_n}
\frac{|A_I \vec{f}_I^{(k)}|^2}{|I|}
\mathbf{1}_I(x)\right]^{\frac{1}{2}},
\end{align}
where $\vec{f}_I^{(k)}$ is as in \eqref{eq-haarcof}.
Combining \cite[Theorem 1.1]{isr21} and \cite[Corollaries 5.4 and 5.6]{DKPS},
we obtain the following quantitative weighted norm estimate for
the Haar square function in \eqref{eq-def-HaarSquareTilde}.

\begin{theorem}\label{thm-Haar-Ap}
If $p\in(1,\infty)$, $W\in\mathscr{A}_p$, and $\{A_I\}_{I\in\mathcal{D}}$
is a sequence of reducing operators of order $p$ for $W$, then, for any
$\vec{f} \in L^p(W)$,
\begin{align*}
\left\|\widetilde{S}_{W,p}\vec{f}\right\|_{L^p}
\lesssim\left[W\right]_{\mathscr{A}_p}^{
\min\{\frac{2}{p}+\frac{\lceil p^{\prime}\rceil}{p},
\frac{1}{p}+\gamma(p)\}}
\left\|\vec{f}\right\|_{L^p(W)}
\end{align*}
and
\begin{align*}
\left\|\vec{f}\right\|_{L^p(W)} \lesssim
\left[W\right]_{\mathscr{A}_p}^{\min\{\frac{1}{p}
+\frac{\lceil p \rceil}{p}, \frac{1}{p}+\frac{\gamma(p^{\prime})}{p-1}\}}
\left\|\widetilde{S}_{W,p}\vec{f}\right\|_{L^p},
\end{align*}
where all the implicit positive constants
are independent of $\vec{f}$ and $W$
and $\gamma(p)$ is as in \eqref{eq-gamma-p}.
\end{theorem}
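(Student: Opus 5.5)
The plan is to obtain each of the two inequalities as the better of two independent bounds. The minimum in each exponent then comes for free once both bounds are available.

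\emph{First branch (direct estimates).} For the upper bound, the exponent $\frac{2}{p}+\frac{\lceil p'\rceil}{p}$ should be exactly the quantitative estimate for the reducing-operator Haar square function $\widetilde S_{W,p}$ in \cite[Theorem 1.1]{isr21}. For the lower bound, I expect $\frac1p+\frac{\lceil p\rceil}{p}$ to follow from the same theorem by the standard duality argument. Concretely, I would pair $\vec f$ with $\vec g\in L^{p'}(W^{-\frac{p'}{p}})$ and expand both in the Haar basis, so that $\langle \vec f,\vec g\rangle=\sum_{I,k}(A_I\vec f_I^{(k)},A_I^{-1}\vec g_I^{(k)})$. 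Cauchy--Schwarz in $\ell^2$ and H\"older then bound this by $\|\widetilde S_{W,p}\vec f\|_{L^p}$ times the $L^{p'}$-norm of the dual square function built from $\{A_I^{-1}\}$. Since $A_I^{-1}$ is comparable to the reducing operators of order $p'$ for $W^{-\frac{p'}{p}}$, up to a factor $[W]^{1/p}_{\mathscr{A}_p}$ by Lemma~\ref{lem-W-dualW-dkps}, the dual square function can be controlled by \cite[Theorem 1.1]{isr21} applied to $W^{-\frac{p'}{p}}\in\mathscr{A}_{p'}$, together with Corollary~\ref{cor-Ap-Ap'}. This is how I expect \cite[Corollary 5.6]{DKPS} to package it, and I will simply quote it.

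\emph{Second branch (comparison with $S_{W,p}$).} Here I will invoke \cite[Corollary 5.4]{DKPS}. It compares, for an arbitrary sequence of Haar coefficients $\{\vec v_I^{(k)}\}$, the $L^p$ norms of
\begin{align*}
\Big[\sum_{I,k}\frac{|A_I\vec v_I^{(k)}|^2}{|I|}\mathbf 1_I\Big]^{\frac12}
\quad\text{and}\quad
\Big[\sum_{I,k}\frac{|W^{\frac1p}(\cdot)\vec v_I^{(k)}|^2}{|I|}\mathbf 1_I\Big]^{\frac12},
\end{align*}
with a loss of $[W]_{\mathscr{A}_p}^{1/p}$ in each direction. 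Taking $\vec v_I^{(k)}=\vec f_I^{(k)}$, this gives $\|\widetilde S_{W,p}\vec f\|_{L^p}\lesssim [W]^{1/p}_{\mathscr{A}_p}\|S_{W,p}\vec f\|_{L^p}$ and the reverse inequality. Chaining with \eqref{eq-HAAR-right} yields
\[
\|\widetilde S_{W,p}\vec f\|_{L^p}\lesssim[W]_{\mathscr{A}_p}^{\frac1p+\gamma(p)}\|\vec f\|_{L^p(W)}.
\]
Chaining with \eqref{eq-HAAR-left} yields
\[
\|\vec f\|_{L^p(W)}\lesssim[W]_{\mathscr{A}_p}^{\frac{\gamma(p')}{p-1}+\frac1p}\|\widetilde S_{W,p}\vec f\|_{L^p}.
\]

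\emph{Conclusion and main obstacle.} Taking the smaller exponent in each pair gives both inequalities of Theorem~\ref{thm-Haar-Ap}. Since $[W]_{\mathscr{A}_p}\ge1$ by Corollary~\ref{coro-[W]geq1}, the smaller exponent gives the stronger bound. The step needing the most care is the norm equivalence between $\widetilde S_{W,p}$ and $S_{W,p}$ with exactly the power $\frac1p$. I need to check that the cited comparison in \cite{DKPS} is stated with precisely this power of $[W]_{\mathscr{A}_p}$, rather than an $\mathscr{A}_{p,\infty}$-type constant, and that their normalisation of reducing operators matches \eqref{eq-reduce} up to factors depending only on $m$. If the constant there is an $\mathscr{A}_{p,\infty}$ constant, I would dominate it by $[W]_{\mathscr{A}_p}$ to that power.
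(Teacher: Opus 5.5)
Your overall structure (two independent bounds, then take the minimum) matches the paper, which proves this theorem purely by citation. Your second branch is consistent with how the paper combines \cite[Corollaries 5.4 and 5.6]{DKPS} with Theorem \ref{thm-HAAR-Ap}: comparing $\widetilde S_{W,p}$ with $S_{W,p}$ at cost $[W]_{\mathscr{A}_p}^{1/p}$ and chaining with \eqref{eq-HAAR-right} and \eqref{eq-HAAR-left} gives exactly $\frac1p+\gamma(p)$ and $\frac1p+\frac{\gamma(p')}{p-1}$.

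The gap is your derivation of the first lower-bound term $\frac1p+\frac{\lceil p\rceil}{p}$: the duality argument you sketch cannot produce it. Put $V:=W^{-\frac{p'}{p}}$, so that $[V]_{\mathscr{A}_{p'}}\sim[W]_{\mathscr{A}_p}^{\frac{1}{p-1}}$ by (the global form of) Corollary \ref{cor-Ap-Ap'}. By H\"older and \eqref{eq-reduce}, $|(\vec u,\vec v)|\le|A_I\vec u|\,|\widetilde A_I\vec v|$, hence $|A_I^{-1}\vec v|\le|\widetilde A_I\vec v|$ with constant $1$. So the factor $[W]^{1/p}_{\mathscr{A}_p}$ you pay there is unnecessary. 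Even without it, dualizing the upper bound with exponent $\frac{2}{p'}+\frac{\lceil p\rceil}{p'}$ for $V$ gives $[V]_{\mathscr{A}_{p'}}^{\frac{2+\lceil p\rceil}{p'}}\sim[W]_{\mathscr{A}_p}^{\frac{2+\lceil p\rceil}{p}}$. This is worse than claimed by $[W]^{1/p}_{\mathscr{A}_p}$, and by $[W]^{2/p}_{\mathscr{A}_p}$ with your loss. The same computation sends $\frac{1}{p'}+\gamma(p')$ exactly to $\frac1p+\frac{\gamma(p')}{p-1}$, so only the second terms of the two minima are dual to each other.

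Hence $\frac1p+\frac{\lceil p\rceil}{p}$ must be an independent input. Your plan to quote it from \cite[Corollary 5.6]{DKPS} does not match the paper. Just before Theorem \ref{thm-HAAR-Ap}, the paper identifies that corollary as the proof of \eqref{eq-HAAR-left} for $S_{W,p}$. That result feeds your second branch, while the ceiling-function exponents are attributed to \cite[Theorem 1.1]{isr21}.

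If you want this term by duality, you need a sharper estimate in which $\widetilde A_I^{-1}$ replaces $A_I$:
$\|[\sum_{I,k}|\widetilde A_I^{-1}\vec f_I^{(k)}|^2|I|^{-1}\mathbf 1_I]^{1/2}\|_{L^p}\lesssim[W]_{\mathscr{A}_p}^{\frac1p+\frac{\lceil p'\rceil}{p}}\|\vec f\|_{L^p(W)}$.
Note that $|\widetilde A_I^{-1}\vec v|\le|A_I\vec v|$. Apply this estimate to $V$ on $L^{p'}$; its dual reducing operators are the $A_I$. It then controls precisely your dual square function built from $A_I^{-1}$, with constant $[W]_{\mathscr{A}_p}^{\frac{1+\lceil p\rceil}{p}}$. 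Combined with $|A_I\vec v|\le\|A_I\widetilde A_I\|\,|\widetilde A_I^{-1}\vec v|\lesssim[W]^{1/p}_{\mathscr{A}_p}|\widetilde A_I^{-1}\vec v|$ (Lemma \ref{lem-W-dualW-dkps}), it also recovers your $\frac2p+\frac{\lceil p'\rceil}{p}$ upper bound.
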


If we take the summation in \eqref{eq-def-HaarSquareTilde}
on all dyadic cubes with the edge length not greater
than $2^{-j}$ for some $j\in\mathbb{Z}$
as in \eqref{eq-def-locHaarSquare},
we obtain the corresponding \emph{local Haar square
function} $\widetilde{S}_{W,p,j}$.
Following a similar spirit to that used in
the proof of Theorem \ref{thm-HAAR-Aploc},
we establish the local version of
Theorem \ref{thm-Haar-Ap} as follows.

\begin{theorem}\label{thm-Haar-Aploc}
Let $j\in\mathbb{Z}$ and $p\in(1,\infty)$, and let
$W\in\mathscr{A}^{\operatorname{loc}}_p(2^{-j})$.
Assume that $\{A_I\}_{I\in\mathcal{D}}$
is a sequence of reducing operators of order $p$ for $W$.
Then, for any  $\vec{f} \in L^p(W)$,
\begin{align}\label{eq-Haar-Aploc-2}
\left\|\widetilde{S}_{W,p,j}\vec{f}\right\|_{L^p}
+\left\|E_j \vec{f}\right\|_{L^p(W)}
\lesssim[W]_{\mathscr{A}^{\operatorname{loc}}_p(2^{-j})}^{
\min\{\frac{2}{p}+\frac{\lceil p^{\prime}\rceil}{p},
\frac{1}{p}+\gamma(p)\}}
\left\|\vec{f}\right\|_{L^p(W)}
\end{align}
and
\begin{align}\label{eq-Haar-Aploc-1}
\left\|\vec{f}\right\|_{L^p(W)}
\lesssim[W]_{\mathscr{A}^{\operatorname{loc}}_p(2^{-j})}^{\min\{\frac{1}{p}
+\frac{\lceil p \rceil}{p}, \frac{1}{p}+\frac{\gamma(p^{\prime})}{p-1}\}}
\left[\left\|\widetilde{S}_{W,p,j}\vec{f}\right\|_{L^p}
+\left\|E_j \vec{f}\right\|_{L^p(W)}\right],
\end{align}
where all the implicit positive constants are independent of
$\vec{f}$, $W$, and $j$ and
$\gamma(p)$ is as in \eqref{eq-gamma-p}.
\end{theorem}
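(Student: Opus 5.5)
We follow the localization argument from the proof of Theorem \ref{thm-HAAR-Aploc}, with $\widetilde{S}$ and Theorem \ref{thm-Haar-Ap} in place of $S$ and Theorem \ref{thm-HAAR-Ap}. As there, by \cite[Proposition 3.6]{cmr16} we may assume that $\vec f$ is bounded with compact support. Fix $Q\in\mathcal{D}_j$ and let $V\in\mathscr{A}_p$ be the extension from Theorem \ref{thm-extension}. It satisfies $V=W$ on $Q$, $[V]_{\mathscr{A}_p}\le 3^{np}[W]_{\mathscr{A}^{\operatorname{loc}}_p(2^{-j})}$, and the periodicity identity \eqref{eq-int-R-Q}.

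Let $\{B_I\}_{I\in\mathcal D}$ be reducing operators of order $p$ for $V$. For dyadic $I\subset Q$ we have $V=W$ on $I$, so by \eqref{eq-reduce} $|B_I\vec z|\sim|A_I\vec z|$ uniformly, with constants depending only on $m$. For $I\supsetneqq Q$, the set $I$ is a disjoint union of $2^{-j}$-cubes, each carrying (by \eqref{eq-int-R-Q}) the same integral $\int_Q|W^{1/p}\vec z|^p$. Hence $|B_I\vec z|\sim|A_Q\vec z|$ for every such $I$. This is the key replacement for the pointwise identity $V^{1/p}=W^{1/p}$ used before.

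Apply Theorem \ref{thm-Haar-Ap} to $\vec f\mathbf 1_Q$ and $V$, and use \eqref{eq-coff-f1Q}. The terms with $I\subset Q$ give exactly $\int_Q[\widetilde S_{W,p,j}\vec f]^p$ up to constants. The terms with $I\supsetneqq Q$ contribute
\[
\sum_{I\supsetneqq Q}\sum_{k}\frac{|B_I\int_Q\vec f|^2}{|I|^2}\mathbf 1_I .
\]
We compute this as in \eqref{eq-Sum}, with $B_I$ replaced by $A_Q$. Summing the geometric series over $I_h\setminus I_{h-1}$ gives a quantity comparable to $|Q|\,|A_Q\fint_Q\vec f|^p$, which in turn is comparable to $\int_Q|W^{1/p}(x)E_j\vec f(x)|^p\,dx$ by \eqref{eq-reduce}. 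Both directions are then handled as follows:
\begin{itemize}
\item For \eqref{eq-Haar-Aploc-1}, use the lower bound of Theorem \ref{thm-Haar-Ap}: $\|\vec f\mathbf 1_Q\|_{L^p(W)}^p=\|\vec f\mathbf 1_Q\|^p_{L^p(V)}$ is at most $[W]^{p\cdot\min\{\cdots\}}$ times the two local pieces. Then sum over $Q\in\mathcal D_j$.
\item For \eqref{eq-Haar-Aploc-2}, on $Q$ we have $\widetilde S_{W,p,j}\vec f\lesssim\widetilde S_{V,p}(\vec f\mathbf 1_Q)$, because only $I\subset Q$ enter and there $A_I\sim B_I$. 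The upper bound of Theorem \ref{thm-Haar-Ap} then bounds this by $[W]^{p\cdot\min\{\cdots\}}\|\vec f\mathbf 1_Q\|^p_{L^p(W)}$. Summing over $Q$ gives the square-function part.
\end{itemize}

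The $E_j$ term in \eqref{eq-Haar-Aploc-2} is handled as in \eqref{eq-Ejf}. There it is bounded by $[W]^{1/p}$, and the exponent here, $\min\{\frac2p+\frac{\lceil p'\rceil}{p},\frac1p+\gamma(p)\}$, is at least $\frac1p$. Since $[W]\ge1$ by Corollary \ref{coro-[W]geq1}, the $E_j$ bound is absorbed.

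The main obstacle is the reducing-operator comparison for $I\supsetneqq Q$. The reflected and periodic construction of $V$ must give equality of the $p$-averages on every $2^{-j}$-dyadic cube (as in \eqref{eq-int-R-Q}), not merely on the reflected copies. We therefore need to check that the dyadic grid at level $j$ aligns with the $2r$-periodic reflection pattern, which holds once $Q$ is translated so that its lower-left corner sits at the origin. We also need independence of the constants from the particular choice of reducing operators, which follows from \eqref{eq-reduce}.
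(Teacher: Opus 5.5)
Your proposal is correct and is essentially the paper's own argument. You extend $W$ from each $Q\in\mathcal{D}_j$ via Theorem~\ref{thm-extension}, and use \eqref{eq-int-R-Q} to see that $A_I$ (for $I\subset Q$) and $A_Q$ (for dyadic $I\supsetneqq Q$) act as reducing operators of order $p$ for $V$. You then apply Theorem~\ref{thm-Haar-Ap} to $\vec f\mathbf{1}_Q$ and sum the geometric tail over $I_h\setminus I_{h-1}$ to recover $\int_Q|W^{\frac1p}E_j\vec f|^p$. The only difference is that the paper omits the upper-bound details by pointing to the proof of Theorem~\ref{thm-HAAR-Aploc}, whereas you spell them out, including absorbing the $E_j$ term via $[W]_{\mathscr{A}^{\operatorname{loc}}_p(2^{-j})}\ge1$.
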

\begin{proof}
The proof of \eqref{eq-Haar-Aploc-2} is similar to that
of \eqref{eq-HAAR-Aploc-2} with $S_{W,p,j}$ therein
replaced by $\widetilde{S}_{W,p,j}$; we omit the details.

Now, we show \eqref{eq-Haar-Aploc-1}.
Without loss of generality, we may assume that $\vec{f}$
is a bounded function on $\mathbb{R}^n$ with compact support
because the set of bounded functions on $\mathbb{R}^n$
with compact support is dense in $L^p(W)$
(see \cite[Proposition 3.6]{cmr16}).
Let $Q\in\mathcal{D}_j$ and $V\in\mathscr{A}_p$ be as in \eqref{eq-V=W}.
Note that, in the proof of Theorem \ref{thm-extension},
the new matrix weight $V$ is constructed by
reflections and translations of $W$ on $Q$.
Therefore, for any $I\in\mathcal{D}$ with $Q\subset I$
and for any $\vec{z}\in\mathbb{C}^m$,
\begin{align}\label{eq-V_R=A_Q}
\left[\fint_{I}\left|V^{\frac{1}{p}}(x)\vec{z}
\right|^p\,dx\right]^{\frac{1}{p}}=\left[\frac{1}{|I|}
\frac{|I|}{|Q|}\int_{Q}
\left|W^{\frac{1}{p}}(x)\vec{z}
\right|^p\,dx\right]^{\frac{1}{p}}\sim\left|A_Q\vec{z}\right|,
\end{align}
where the positive equivalence constants
are independent of $Q$, $I$, and $\vec{z}$.
Hence $A_Q$ serves as a reducing operator
of order $p$ for $V$ on $I$
(with constants independent of $I$).
Let $\beta:=\min\{1
+\lceil p \rceil, 1+\frac{p\gamma(p^{\prime})}{p-1}\}$.
Combining \eqref{eq-coff-f1Q}, \eqref{eq-V=W},
and \eqref{eq-V_R=A_Q} and applying
Theorem \ref{thm-Haar-Ap} to $\vec{f}\mathbf{1}_Q$
and $V$, we obtain
\begin{align}\label{eq-f1Q}
\left\|\vec{f}\mathbf{1}_Q\right\|^p_{L^p(W)}
&=\left\|\vec{f}\mathbf{1}_Q\right\|^p_{L^p(V)}\\
&\lesssim[V]_{\mathscr{A}_p}^{\beta}
\int_{\mathbb{R}^n}\left[\sum_{\genfrac{}{}{0pt}{}{I\in\mathcal{D}}{I\subset Q}}\sum_{k\in\operatorname{Sig}_n}
\frac{|A_I \vec{f}_I^{(k)}|^2}{|I|}
\mathbf{1}_I(x)\right]^{\frac{p}{2}}\,dx\nonumber\\
&\quad+[V]_{\mathscr{A}_p}^{\beta}
\int_{\mathbb{R}^n}\left[\sum_{\genfrac{}{}{0pt}{}{I\in\mathcal{D}}{Q\subsetneqq I}}
\sum_{k\in\operatorname{Sig}_n}
\frac{|A_Qh_I^{(k)}(x_Q)\int_{Q}\vec{f}(y)\,dy|^2}{|I|}
\mathbf{1}_I(x)\right]^{\frac{p}{2}}\,dx\nonumber\\
&=[V]_{\mathscr{A}_p}^{\beta}
\int_{Q}\left[\widetilde{S}_{W,p,j}
\vec{f}(x)\right]^p\,dx\nonumber\\
&\quad+[V]_{\mathscr{A}_p}^{\beta}(2^{n}-1)^{\frac{p}{2}}\left|A_Q\int_{Q}
\vec{f}(y)\,dy\right|^p\int_{\mathbb{R}^n}
\left[\sum_{\genfrac{}{}{0pt}{}{I\in\mathcal{D}}{Q\subsetneqq I}}
\frac{1}{|I|^2} \mathbf{1}_I(x)\right]^{\frac{p}{2}}\,dx.\nonumber
\end{align}
Next, we estimate the last integral in \eqref{eq-f1Q}.
To this end, for any $h\in\mathbb{Z}_+$,
let $I_h$ be  the unique dyadic cube in
$\mathcal D_{j-h}$ containing $Q$.
Since $\{I_h\}_{h\in\mathbb{Z}_+}$ is a nested family
and $|I_h|=2^{hn}|Q|$,
we deduce that, for any $h\in\mathbb{Z}_+$ and
$x\in I_{h}\setminus I_{h-1}$ (where $I_{-1}:=\emptyset$),
\[
\sum_{\genfrac{}{}{0pt}{}{I\in\mathcal{D}}{Q\subsetneqq I}}
\frac{1}{|I|^2}\mathbf 1_I(x)
=\sum_{l=h}^{\infty}\frac{1}{|I_l|^2}
\sim\frac{1}{2^{2hn}|Q|^2}
\]
and hence
\begin{align*}
\int_{\mathbb R^n}\left[\sum_{\genfrac{}{}{0pt}{}{I\in\mathcal{D}}{Q\subsetneqq I}}
\frac{1}{|I|^2} \mathbf{1}_I(x)\right]^{\frac{p}{2}}\, dx
&=\sum_{h\in\mathbb{Z}_+}\int_{I_h\setminus I_{h-1}}
\left[\sum_{\genfrac{}{}{0pt}{}{I\in\mathcal{D}}{Q\subsetneqq I}}
\frac{1}{|I|^2}\mathbf{1}_I(x)\right]^{\frac{p}{2}}\,dx \\
&\lesssim\sum_{h\in\mathbb{Z}_+}|I_h|
\left(\frac{1}{2^{2hn}|Q|^2}\right)^{\frac p2}\\
&=|Q|^{1-p}\sum_{h\in\mathbb{Z}_+}
\frac{1}{2^{hn(p-1)}}\sim|Q|^{1-p}.
\end{align*}
Inserting this estimate into \eqref{eq-f1Q} and using \eqref{eq-Ej},
\eqref{eq-V=W}, and \eqref{eq-reduce}, we conclude that
\begin{align}\label{eq-f-Q}
\left\|\vec{f}\mathbf{1}_Q\right\|^p_{L^p(W)}
&\lesssim[V]_{\mathscr{A}_p}^{\beta}\\
&\quad\times\left\{\int_{Q}\left[\widetilde{S}_{W,p,j}
\vec{f}(x)\right]^p\,dx+\int_{Q}
\left|W^{\frac{1}{p}}(x)
E_j\vec{f}(x)\right|^p\,dx\right\}\nonumber\\
&\lesssim[W]_{\mathscr{A}^{\operatorname{loc}}_p(2^{-j})}^{\beta}\nonumber\\
&\quad\times\left\{\int_{Q}\left[\widetilde{S}_{W,p,j}
\vec{f}(x)\right]^p\,dx+\int_{Q}
\left|W^{\frac{1}{p}}(x)E_j\vec{f}(x)\right|^p\,dx\right\}.\nonumber
\end{align}
Summing over all dyadic cubes in $\mathcal{D}_j$ on both sides of
\eqref{eq-f-Q}, we obtain
\begin{align*}
\left\|\vec{f}\right\|^p_{L^p(W)}
&=\sum_{Q\in\mathcal{D}_j}\left\|\vec{f}\mathbf{1}_Q\right\|^p_{L^p(W)}\\
&\lesssim[W]_{\mathscr{A}^{\operatorname{loc}}_p(2^{-j})}^{\beta}\\
&\quad\times\left\{\sum_{Q\in\mathcal{D}_j}\int_{Q}
\left[\widetilde{S}_{W,p,j}
\vec{f}(x)\right]^p\,dx+\sum_{Q\in\mathcal{D}_j}\int_{Q}
\left|W^{\frac{1}{p}}(x)E_j\vec{f}(x)\right|^p\,dx\right\}\\
&=[W]_{\mathscr{A}^{\operatorname{loc}}_p(2^{-j})}^{\beta}
\left(\left\|\widetilde{S}_{W,p,j}\vec{f}
\right\|^p_{L^p}+\left\|E_j \vec{f}\right\|^p_{L^p(W)}\right).
\end{align*}
This completes the proof of \eqref{eq-Haar-Aploc-1}
and hence Theorem \ref{thm-Haar-Aploc}.
\end{proof}

\begin{remark}
Note that the exponents of matrix weight constants in
Theorems \ref{thm-HAAR-Aploc} and \ref{thm-Haar-Aploc}
respectively match the corresponding global ones
in Theorems \ref{thm-HAAR-Ap} and \ref{thm-Haar-Ap}.
\end{remark}

\subsection{Calder\'{o}n--Zygmund Operators With Exponential Decay}\label{s3-4}

In this subsection, we aim to establish the boundedness of
Calder\'{o}n--Zygmund operators with exponential decay on
local matrix-weighted Lebesgue spaces.
To this end, we first recall the definition of such operators
(see \cite[Definition 2.7]{dlt25} for similar operators with exponential decay
and a critical radius function).
Let $\delta\in(0,1]$ and $t\in[0,\infty)$.
A linear operator $T$ is called a
\emph{$(\delta,t)$-Calder\'{o}n--Zygmund operator}
if $T$ is bounded on $L^2$ and,
for any $f \in C_{\operatorname{c}}^{\infty}$
(the \emph{set of all infinitely differentiable functions on $\mathbb{R}^n$
with compact support}) and $x \notin \operatorname{supp} f$,
$$
T f(x):=\int_{\mathbb{R}^n} K(x, y) f(y)\,dy,
$$
where the kernel function $K:(\mathbb{R}^n\times\mathbb{R}^n)\setminus
\{(x,x):x\in\mathbb{R}^n\}\to\mathbb{C}$ satisfies
that there exists a positive constant $C$ such that
\begin{itemize}
\item[{\rm (i)}] for any $x,y\in\mathbb{R}^n$ satisfying $x\neq y$,
\begin{align}\label{eq-decay-condition}
|K(x, y)|\leq C\frac{e^{-t|x-y|}}{|x-y|^{n}},
\end{align}
\item[{\rm (ii)}] for any $x,y,h\in\mathbb{R}^n$
satisfying $|x-y|>2|h|$,
\begin{align}\label{eq-smooth-condition}
|K(x, y)-K(x, y+h)|+|K(x, y)-K(x+h, y)|\leq
C\frac{|h|^\delta}{|x-y|^{n+\delta}}.
\end{align}
\end{itemize}
\begin{remark}
It immediately follows from \eqref{eq-decay-condition}
that every $(\delta,t)$-Calder\'{o}n--Zygmund operator is
also a standard $\delta$-Calder\'{o}n--Zygmund operator
(see, for instance, \cite[p.\,36]{cim18}).
\end{remark}

We first point out that the exponential decay condition
in \eqref{eq-decay-condition} cannot be improved in general.
This is because, even in the scalar case,
the class $A_{p}^{\mathrm{loc}}(r)$ contains the weight
$e^{|\cdot|}$, which has exponential growth.
The following proposition precisely indicates this point.
Suppose that $\rho$ is a positive function defined on $[0,\infty)$.
For any bounded function $f$ on $\mathbb{R}^n$ with compact support
and for any $x\in\mathbb{R}^n$, let
\begin{align}\label{eq-def-Jrho}
J_\rho f(x):=\int_{\mathbb{R}^n}f(y)
\rho(|x-y|)\,dy.
\end{align}
\begin{proposition}\label{prop-exp-decay}
Let $p\in(1,\infty)$ and $\rho$ be a positive
measurable function defined on $[0,\infty)$.
If $J_{\rho}$ is bounded on $L^p(e^{|\cdot|})$,
then
\begin{align*}
\int_{ \mathbb{R}^n}\left[\int_{B(x, 1)}
\rho(|y|)\,dy\right]^p e^{|x|}\,dx<\infty,
\end{align*}
where $J_{\rho}$ is as in \eqref{eq-def-Jrho}.
Moreover, if $\rho$ is decreasing, then
\begin{align}\label{eq-decay-e1}
\int_{1}^{\infty}\rho^p(r)e^r r^{n-1}\,dr<\infty.
\end{align}
\end{proposition}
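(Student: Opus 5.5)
Test the boundedness of $J_\rho$ on $L^p(e^{|\cdot|})$ against the indicator of the unit ball, $f:=\mathbf{1}_{B(\mathbf{0},1)}$. Since $e^{|\cdot|}\le e$ on $B(\mathbf{0},1)$, we have $f\in L^p(e^{|\cdot|})$ with $\|f\|_{L^p(e^{|\cdot|})}^p\le e|B(\mathbf{0},1)|<\infty$. For every $x\in\mathbb{R}^n$, the change of variables $z:=x-y$ gives
\begin{align*}
J_\rho f(x)=\int_{B(\mathbf{0},1)}\rho(|x-y|)\,dy=\int_{B(x,1)}\rho(|z|)\,dz .
\end{align*}
The assumed boundedness of $J_\rho$ on $L^p(e^{|\cdot|})$ then yields $\int_{\mathbb{R}^n}[\int_{B(x,1)}\rho(|y|)\,dy]^p e^{|x|}\,dx=\|J_\rho f\|_{L^p(e^{|\cdot|})}^p<\infty$, which is the first assertion. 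Positivity and measurability of $\rho$ are all that is needed for this identity to make sense, with values in $[0,\infty]$.

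For \eqref{eq-decay-e1}, assume in addition that $\rho$ is decreasing and bound the inner integral from below. Fix $x$ with $|x|\ge 2$. The set $B(x,1)\cap B(\mathbf{0},|x|)$ contains a ball of radius $\frac12$, for instance $B(x-\frac{x}{2|x|},\frac12)$. Every point $y$ of it satisfies $|y|\le|x|$, so $\rho(|y|)\ge\rho(|x|)$. Hence
\begin{align*}
\int_{B(x,1)}\rho(|y|)\,dy\ge c_n\,\rho(|x|),\qquad c_n:=|B(\mathbf{0},\tfrac12)|.
\end{align*}
Inserting this into the first assertion and passing to polar coordinates gives
\begin{align*}
\int_2^\infty\rho^p(r)e^r r^{n-1}\,dr\lesssim\int_{|x|\ge2}\rho^p(|x|)e^{|x|}\,dx<\infty .
\end{align*}

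It remains to control the interval $[1,2]$. There $r^{n-1}\le 2^{n-1}$, $e^r\le e^2$, and $\rho(r)\le\rho(1)<\infty$, because $\rho$ is decreasing and real-valued. So the integral over $[1,2]$ is finite and \eqref{eq-decay-e1} follows.

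The step needing the most care is the geometric lower bound: one must choose a piece of $B(x,1)$ of fixed measure on which $|y|\le|x|$, so that monotonicity applies. Restricting to $|x|\ge2$ avoids all degenerate cases near the origin. Everything else is a direct substitution.
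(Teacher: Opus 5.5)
Your proof is correct and essentially the paper's argument. Both test boundedness on $\mathbf{1}_{B(\mathbf{0},1)}$ for the first assertion, and both use the ball $B(x-\frac{x}{2|x|},\frac12)\subset B(x,1)$, on which $|y|\le|x|$, together with the monotonicity of $\rho$ for \eqref{eq-decay-e1}. The only difference is cosmetic: the paper applies this ball argument directly for all $|x|\ge1$ (it works as soon as $|x|\ge\frac12$), so your separate treatment of $[1,2]$ via $\rho\le\rho(1)$ is valid but unnecessary.
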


\begin{proof}
From \eqref{eq-def-Jrho}, we deduce that, for any $x\in \mathbb{R}^n$,
\begin{align*}
\left|J_{\rho} \mathbf{1}_{B(\mathbf{0}, 1)}(x)\right|=
\int_{B(\mathbf{0}, 1)}\rho\left(|x-y|\right)\,dy
=\int_{B(x, 1)}\rho\left(|y|\right)\,dy.
\end{align*}
This, combined with the assumption that $J_{\rho}$
is bounded on $L^p(e^{|\cdot|})$,
further implies that
\begin{align}\label{eq-decay-e2}
\int_{ \mathbb{R}^n}\left[\int_{B(x, 1)}\rho(|y|)
\,dy\right]^p e^{|x|}\,dx
&=\int_{\mathbb{R}^n}|J_{\rho} \mathbf{1}_{B(\mathbf{0}, 1)}(x)|^p e^{|x|}\,dx\\
&=\left\|J_{\rho} \mathbf{1}_{B(\mathbf{0}, 1)}\right\|^p_{L^p(e^{|\cdot|})}
\lesssim\left\|\mathbf{1}_{B(\mathbf{0}, 1)}\right\|^p_{L^p(e^{|\cdot|})}\sim 1.\nonumber
\end{align}
Moreover, if $\rho$ is decreasing, then,
for any $x\in\mathbb{R}^n$ with $|x|\geq1$
and $y\in B(x-\frac{x}{2|x|}, \frac{1}{2})$,
we have $|y|\leq|x-\frac{x}{2|x|}|+\frac{1}{2}=|x|$,
and hence $\rho(|x|)\leq\rho(|y|)$.
By this fact, we conclude that
\begin{align*}
\int_{1}^{\infty}[\rho(r)]^pe^r r^{n-1}\,dr&\sim
\int_{B(\mathbf{0},1)^\complement}[\rho(|x|)]^pe^{|x|}\,dx\\
&\lesssim\int_{B(\mathbf{0},1)^\complement}
\left[\int_{B(x-\frac{x}{2|x|}, \frac{1}{2})}
\rho(|y|)\,dy\right]^p e^{|x|}\,dx.
\end{align*}
This, together with \eqref{eq-decay-e2} and the fact that,
for any $x\in\mathbb{R}^n$ with $|x|\geq1$,
$B(x-\frac{x}{2|x|}, \frac{1}{2})\subset B(x,1)$,
further implies \eqref{eq-decay-e1}.
This completes the proof of Proposition \ref{prop-exp-decay}.
\end{proof}

The following theorem is the main result of this subsection,
which establishes the boundedness of
$(\delta,t)$-Calder\'{o}n--Zygmund operators on $L^p(W)$ with local matrix
$\mathscr{A}^{\operatorname{loc}}_{p}(r)$ weights.

\begin{theorem}\label{thm-CZloc}
Let $r\in(0,\infty)$, $\delta\in(0,1]$, $p\in(1,\infty)$, and
$W\in\mathscr{A}^{\operatorname{loc}}_{p}(r)$.
If
\begin{align}\label{eq-t}
t\in\left(\frac{3}{p r}\ln\left(2^{np}
[W]_{\mathscr{A}^{\operatorname{loc}}_{p}(r)}\right),\infty\right)
\end{align}
and $T$ is a $(\delta,t)$-Calder\'{o}n--Zygmund operator,
then there exists a positive constant $C$
such that
\begin{align}\label{e4.52}
\|T\|_{L^p(W)\to L^p(W)}\leq
C[W]_{\mathscr{A}^{\operatorname{loc}
}_{p}(r)}^{1+\frac{1}{p-1}-\frac{1}{p}},
\end{align}
where the positive constant $C$ depends on
$T$, $n$, $m$, $r$, $\delta$, and $p$.
\end{theorem}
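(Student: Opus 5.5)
Split $T$ into a local part and a far part with respect to the scale $r$. Fix a smooth cutoff, or simply use indicator functions: write $K=K\mathbf 1_{\{|x-y|\le r/2\}}+K\mathbf 1_{\{|x-y|>r/2\}}$, so that $T=T_{\rm loc}+T_{\rm far}$. The local part is treated by the same localization scheme used for Theorems \ref{thm-bound-maxloc} and \ref{thm-bound-fracintloc}. Tile $\mathbb R^n$ by the collection $\Gamma$ of cubes $Q$ with edge length $\frac r2$, and let $\widetilde Q$ be the concentric cube of edge length $\frac32 r$, written as $\widetilde Q=\bigcup_{i=1}^{2^n} Q_i$ with $\ell(Q_i)=r$ and $Q\subset\bigcap_i Q_i$. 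Theorem \ref{thm-extension} gives $W_i\in\mathscr A_p$ with $W_i=W$ on $Q_i$ and $[W_i]_{\mathscr A_p}\lesssim[W]_{\mathscr A^{\rm loc}_p(r)}$.

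On $Q$ we have $T_{\rm loc}\vec f=T_{\rm loc}(\vec f\mathbf 1_{\widetilde Q})$. We write $T_{\rm loc}=T-T_{\rm far}$ applied to $\vec f\mathbf 1_{Q_i}$ (after a partition of $\mathbf 1_{\widetilde Q}$ subordinate to the $Q_i$). The piece coming from $T$ is handled by the known sharp-type matrix $\mathscr A_p$ bound for standard Calder\'on--Zygmund operators, $\|T\|_{L^p(W_i)}\lesssim[W_i]_{\mathscr A_p}^{1+\frac1{p-1}-\frac1p}$ (Nazarov--Petermichl--Treil--Volberg, and Cruz-Uribe--Isralowitz--Moen for general $p$). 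This is applicable because a $(\delta,t)$-operator is a standard $\delta$-CZO. The truncation error $T_{\rm far}(\vec f\mathbf 1_{Q_i})$ restricted to $Q$ is controlled by a local maximal operator $M_{W,0,2r}$, since its kernel is bounded by $r^{-n}$ on a cube of size $\sim r$. By Theorems \ref{thm-bound-maxloc} and \ref{thm:Apq(1+eta)} this costs only $[W]^{\max\{\frac1{p-1},\frac2p\}}$, which is dominated by the target exponent because $[W]\ge1$. Summing over $Q\in\Gamma$ with bounded overlap gives the local estimate, exactly as in \eqref{eq-last-est}.

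For $T_{\rm far}$, the Calder\'on--Zygmund structure is not used, only the decay \eqref{eq-decay-condition}. Decompose dyadically in distance: for $k\ge0$, the annulus $\{2^{k-1}r<|x-y|\le 2^k r\}$ contributes at most $C(2^kr)^{-n}e^{-t2^{k-1}r}\int_{|x-y|\le2^kr}|W^{1/p}(x)W^{-1/p}(y)\vec f(y)|\,dy$. This is $\lesssim e^{-t2^{k-1}r}M_{W,0,2^{k+1}r}\vec f(x)$. Equivalently, with fixed-size steps of length $r$, one chains through $N\sim 2^k$ adjacent cubes. The key quantitative input is the growth of the local constant with scale. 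Iterating Lemma \ref{lem:AQ1AQ2} along a chain of $\sim j$ overlapping cubes of size $r$ should give $\|A_{Q}\widetilde A_{Q'}\|\lesssim (C_n[W]^{1/p})^{j}$ for cubes at distance $jr$, which is an exponential doubling of the form $[W]_{\mathscr A^{\rm loc}_p(jr)}\lesssim (2^{np}[W])^{c j}$. Hence the $L^p$ norm of the $k$-th (or $j$-th) piece is bounded by $e^{-tjr/2}(2^{np}[W])^{3j/2p}$ up to harmless factors. This series in $j$ converges precisely when $t>\frac{3}{pr}\ln(2^{np}[W])$, matching \eqref{eq-t}. Its sum is $\lesssim[W]^{c'}$, with first term controlled by the local bound, so it is absorbed into \eqref{e4.52}, the constant depending on $t$ through $T$.

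The main obstacle is this last step. One must obtain the precise exponential growth rate of $\|W^{1/p}(x)W^{-1/p}(y)\|$-averages across distance $jr$, with the base $2^{np}[W]$ and exponent $3/(2p)$ per step, so that it matches the threshold in \eqref{eq-t}. I would first try the direct one-scale argument used for $\widetilde M_{W,\alpha,r}$ in the proof of Theorem \ref{thm-bound-maxloc}: cover by disjoint families of cubes of size $(j+1)r$ and apply H\"older, so that the $L^p$ norm of the $j$-th piece is $\lesssim e^{-tjr}j^{n}[W]^{1/p}_{\mathscr A^{\rm loc}_p((j+1)r)}$. The constant $[W]_{\mathscr A^{\rm loc}_p((j+1)r)}$ would then be bounded by splitting a big cube into $(j+1)^n$ cubes of size $r$ and using the chaining estimate above. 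This produces the factor $(2^{np}[W])^{3j/p}$ after taking $p$-th roots appropriately.
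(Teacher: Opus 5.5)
Your local part works; it is a variant of the paper's decomposition. The paper tiles by cubes of side $\frac r3$, so $\widetilde Q$ has side exactly $r$ and Theorem \ref{thm-extension} applies to $\widetilde Q$ directly. The remainder $\vec f\mathbf 1_{\widetilde Q^\complement}$ is then seen from $x\in Q$ only at distances $|x-y|\ge\frac r3$ and is handled by \eqref{eq-decay-condition} alone, so no truncated operator or maximal-function error term appears. In your version, bound the truncation error by $M_{W,0,r}$, which is legitimate because $x\in Q\subset Q_i\ni y$. At scale $2r$, Theorem \ref{thm-bound-maxloc} would involve $[W]_{\mathscr{A}^{\operatorname{loc}}_p(2r)}$, which Theorem \ref{thm:Apq(1+eta)} controls only by a higher power of $[W]_{\mathscr{A}^{\operatorname{loc}}_p(r)}$.

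The genuine gap is the far part. This is the only place where \eqref{eq-t} enters, and you leave it as a target: the base $2^{np}[W]$ and the rate $\frac{3}{2p}$ are chosen to reproduce \eqref{eq-t}, not derived. The mechanism you suggest also cannot deliver them. Each link of a chain built from Lemma \ref{lem:AQ1AQ2} costs $C[W]^{1/p}$, where $C=C(n,m,p)$ collects three losses: the $\sqrt m$ factors in \eqref{eq-reduce}, the constant of Lemma \ref{lem-W-dualW-dkps}, and the inversion $\|A_Q^{-1}N\|\lesssim\|\widetilde A_QN\|$ (used in the proof of Theorem \ref{thm:Apq(1+eta)}) needed to pass from one link to the next. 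These compound to $(C[W]^{1/p})^j$. You therefore get convergence only under a hypothesis $t>\frac{c}{pr}\ln(C^p[W])$ with an $m$-dependent $C$, which \eqref{eq-t} does not imply (take $[W]$ close to $1$ and $m$ large). Separately, a series that converges ``precisely when'' $t$ passes the threshold has no bound uniform in $W$. You need a margin, and you must also check that the resulting power of $[W]$ stays below $1+\frac1{p-1}-\frac1p$.

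The missing idea is the one in Lemma \ref{lem-vector-K_a}: never propagate two-sided quantities such as $\|A_Q\widetilde A_{Q'}\|$ over long distances.

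First, tile by cubes $Q$ of side $r$ and use $e^{-t|x-y|}\le e^{-td(x,Q)}$ for $y\in Q$. Then insert the reducing operator of the source cube:
\[
|W^{1/p}(x)W^{-1/p}(y)\vec f(y)|\le\|W^{1/p}(x)A_Q^{-1}\|\,|A_QW^{-1/p}(y)\vec f(y)|.
\]
The $y$-factor stays in $Q$ and costs $|Q|^{1/p'}[W]^{1/p}\|\vec f\mathbf 1_Q\|_{L^p}$ by H\"older's inequality and Corollary \ref{cor-Ap-Ap'}.

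Only the one-sided function $x\mapsto\|W^{1/p}(x)A_Q^{-1}\|^p$ has to be propagated, and this is done vector by vector. For $\vec z:=A_Q^{-1}e_i$, the scalar weight $|W^{1/p}\vec z|^p$ lies in $A^{\operatorname{loc}}_p(r)$ with constant at most $[W]$ (Lemma \ref{lem-W-w}). Its local doubling gives
\[
\int_{(k+1)Q\setminus kQ}|W^{1/p}(x)\vec z|^p\,dx\le M\int_{kQ\setminus(k-1)Q}|W^{1/p}(x)\vec z|^p\,dx
\]
with exactly $M:=2^{np}[W]$, hence $M^k$ after iteration. The $m$-dependent equivalence $\|B\|^p\sim\sum_i|Be_i|^p$ is used only once, not at every step.

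Finally, apply H\"older in the sum over $Q$ with $e^{-td}=e^{-\alpha d}e^{-(t-\alpha)d}$ and $\alpha:=\frac{5}{2pr}\ln M$. One factor has geometric ratio $Me^{-\alpha pr/2}=M^{-1/4}\le2^{-np/4}$. The other has $(t-\alpha)r>\frac1{2p}\ln M\ge\frac{n\ln2}{2}$. Both are uniform in $W$, and this is exactly where the constant $3$ in \eqref{eq-t} comes from. The far part then costs only $[W]^{2/p}$, which is below the target exponent.
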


Note that, in \eqref{eq-decay-condition}, when $x$ is close to
$y$, then $e^{-t|x-y|}\sim1$. Thus,
a $(\delta,t)$-Calder\'{o}n--Zygmund operator locally behaves
like a $\delta$-Calder\'{o}n--Zygmund operator.
Motivated by this observation, we recall the boundedness of
$\delta$-Calder\'{o}n--Zygmund operators
on $L^p(W)$ with matrix $\mathscr{A}_p$ weights established
by Nazarov et al. in \cite[Theorem 1.1]{nptv} for $p=2$ and
by Cruz-Uribe et al. in \cite[Corollary 1.16]{cim18}
for all $p\in(1,\infty)$.

\begin{theorem}\label{thm-CZ}
Let $\delta\in(0,1]$, $p\in(1,\infty)$, and
$T$ be a $\delta$-Calder\'{o}n--Zygmund operator.
Then there exists a positive constant $C$ such that,
for any $W\in\mathscr{A}_{p}$,
\begin{align}\label{e4.53}
\|T\|_{L^p(W)\to L^p(W)} \leq C
[W]_{\mathscr{A}_p}^{1+\frac{1}{p-1}-\frac{1}{p}},
\end{align}
where $C$ is independent of $W$.
\end{theorem}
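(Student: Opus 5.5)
The natural route is \emph{convex body sparse domination}, as in \cite{nptv} for $p=2$ and its extension in \cite{cim18}. The weight is handled only at the last stage, through reducing operators and auxiliary maximal functions.

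\textbf{Step 1: sparse domination.} For bounded, compactly supported $\vec f,\vec g$ we first show that there are at most $3^n$ sparse families $\mathcal S$ (coming from the $3^n$ shifted dyadic grids) such that
\begin{align*}
\left|\langle T\vec f,\vec g\rangle\right|\lesssim\sum_{\mathcal S}\sum_{Q\in\mathcal S}|Q|\sup_{\vec a\in\langle\langle\vec f\rangle\rangle_Q,\ \vec b\in\langle\langle\vec g\rangle\rangle_Q}\left|(\vec a,\vec b)\right|.
\end{align*}
Here $\langle\langle\vec f\rangle\rangle_Q:=\{\fint_Q\varphi\vec f:\ \|\varphi\|_{L^\infty(Q)}\le1\}$ is the convex body average of $\vec f$ on $Q$. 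The construction is the usual stopping-time argument. The stopping cubes are chosen using the scalar weak-type $(1,1)$ bound for the grand maximal truncation of $T$, applied componentwise. The key point making the vector case work is a John ellipsoid reduction: each convex body $\langle\langle\vec f\rangle\rangle_Q$ is comparable, up to a factor $\sqrt m$, to an ellipsoid. This step uses only the $\delta$-Calder\'on--Zygmund structure of $T$ and is independent of $W$.

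\textbf{Step 2: inserting the weight.} Replace $\vec f$ by $W^{-1/p}\vec h$ and $\vec g$ by $W^{1/p}\vec k$, so that $\|T\|_{L^p(W)\to L^p(W)}$ is the norm of $\vec h\mapsto W^{1/p}T(W^{-1/p}\vec h)$ on $L^p$, tested by duality against $\vec k\in L^{p'}$.

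Let $A_Q$ and $\widetilde A_Q$ be reducing operators of order $p$ for $W$ and of order $p'$ for $W^{-p'/p}$. For $\vec a=\fint_Q\varphi W^{-1/p}\vec h$ and $\vec b=\fint_Q\psi W^{1/p}\vec k$, write
\begin{align*}
\left|(\vec a,\vec b)\right|=\left|\left(\widetilde A_Q^{-1}\vec a,\widetilde A_Q A_Q A_Q^{-1}\vec b\right)\right|\le\left\|\widetilde A_QA_Q\right\|\fint_Q\left|\widetilde A_Q^{-1}W^{-\frac1p}\vec h\right|\fint_Q\left|A_Q^{-1}W^{\frac1p}\vec k\right|.
\end{align*}
By Lemma \ref{lem-W-dualW-dkps} with $p=q$, we have $\|\widetilde A_QA_Q\|=\|A_Q\widetilde A_Q\|\lesssim[W]_{\mathscr A_p}^{1/p}$. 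This reduces the problem to a \emph{scalar} sparse form
\begin{align*}
\sum_{Q\in\mathcal S}|Q|\,\langle u\rangle_Q\langle v\rangle_Q,\qquad u:=\left|\widetilde A_Q^{-1}W^{-\frac1p}\vec h\right|,\quad v:=\left|A_Q^{-1}W^{\frac1p}\vec k\right|,
\end{align*}
where $u$ and $v$ still depend on $Q$ through the reducing operators.

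\textbf{Step 3: the main obstacle.} The remaining task is to bound this scalar sparse form by
\begin{align*}
[W]_{\mathscr A_p}^{\frac1{p-1}}\|\vec h\|_{L^p}\|\vec k\|_{L^{p'}},
\end{align*}
up to an additional factor $[W]_{\mathscr A_p}^{1-\frac1p}$ distributed between the two averages. Since $1+\frac1{p-1}-\frac1p=\frac1p+\frac1{p-1}+\frac1{p'}-\frac1p$, the total exponent splits as $\frac1p$ from Step 2 plus the exponents of the two maximal operators below.

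First use sparseness: pick pairwise disjoint sets $E_Q\subset Q$ with $|E_Q|\gtrsim|Q|$, and then apply H\"older's inequality. This bounds the sparse form by the product of the $L^p$ norm of
\begin{align*}
\sup_{Q\ni x}\fint_Q\left|\widetilde A_Q^{-1}W^{-\frac1p}\vec h\right|
\end{align*}
and the $L^{p'}$ norm of the analogous maximal function built from $A_Q^{-1}W^{1/p}\vec k$. These are Goldberg-type maximal operators. I would estimate them as in \cite{cim18}: dominate $|\widetilde A_Q^{-1}W^{-1/p}(y)\vec z|$ via the reverse H\"older inequality for the scalar weights $|W^{-1/p}\vec e|^{p'}$, with $\vec e$ running over an orthonormal basis. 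The reverse H\"older exponent and constant must be tracked sharply in terms of $[W]_{\mathscr A_p}$ (or its $\mathscr A_\infty$-type constant).

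I expect this bookkeeping to be the delicate part, that is, obtaining exactly the two maximal norms
\begin{align*}
[W]_{\mathscr A_p}^{\frac1{p(p-1)}}\quad\text{and}\quad[W]_{\mathscr A_p}^{\frac1p}
\end{align*}
(up to the symmetric rearrangement) so that everything sums to the stated exponent. If the direct route loses powers, the fallback is to interpolate between a pure sparse bound and the trivial $\|A_Q\widetilde A_Q\|$ bound, organised by a pigeonholing of cubes according to the size of $\|\widetilde A_Q^{-1}W^{-1/p}\|$.
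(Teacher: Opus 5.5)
The paper does not prove this statement; it quotes it from \cite{nptv} (for $p=2$) and \cite[Corollary 1.16]{cim18}. Your route is essentially the argument of those references: convex body sparse domination, splitting $|(\vec a,\vec b)|$ through $\|A_Q\widetilde A_Q\|\lesssim[W]_{\mathscr A_p}^{1/p}$, and then two reducing-operator maximal functions controlled by a quantitative reverse H\"older inequality. So the approach is sound. Your exponent bookkeeping in Step 3 is inconsistent, though. Once it is corrected, that step is routine and the pigeonholing fallback is not needed.

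\emph{First maximal function.} The scalar weights $|W^{-1/p}\vec z|^{p'}$ lie in $A_{p'}$ with constant $\lesssim[W]_{\mathscr A_p}^{1/(p-1)}$, uniformly in $\vec z$; this follows from Lemma \ref{lem-W-w} applied to $W^{-p'/p}$ together with Corollary \ref{cor-Ap-Ap'}. That uniformity is what lets you apply Lemma \ref{lem:rhi} to $|W^{-1/p}\widetilde A_Q^{-1}e_i|^{p'}$ even though the direction depends on $Q$. It gives a reverse H\"older exponent $1+\delta$ with $\delta\sim[W]_{\mathscr A_p}^{-1/(p-1)}$. Since $(\fint_Q\|W^{-1/p}\widetilde A_Q^{-1}\|^{p'})^{1/p'}\lesssim1$, you get
\[
\fint_Q|\widetilde A_Q^{-1}W^{-1/p}\vec h|\lesssim\Big(\fint_Q|\vec h|^s\Big)^{1/s}\quad\text{with } p-s\sim\delta .
\]
Hence the first maximal function is bounded on $L^p$ by $\delta^{-1/p}\sim[W]_{\mathscr A_p}^{\frac{1}{p(p-1)}}$, as you said.

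\emph{Second maximal function.} Run the same argument with the $A_p$ weights $|W^{1/p}\vec z|^p$, whose constant is at most $[W]_{\mathscr A_p}$, so $\delta\sim[W]_{\mathscr A_p}^{-1}$. This bounds the second maximal function on $L^{p'}$ by $[W]_{\mathscr A_p}^{1/p'}$, not by $[W]_{\mathscr A_p}^{1/p}$ as you wrote.

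With the corrected norms the exponents add up exactly:
\[
\frac1p+\frac{1}{p(p-1)}+\frac1{p'}=1+\frac{1}{p(p-1)}=1+\frac{1}{p-1}-\frac1p .
\]
With your stated norms the total would be $\frac2p+\frac{1}{p(p-1)}$, which matches only at $p=2$. Similarly, the opening sentence of Step 3 asks the sparse form to carry $[W]^{\frac1{p-1}}$ times $[W]^{1-\frac1p}$. That double-counts the $\frac1p$ already spent in Step 2.
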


\begin{remark}
Observe that the exponent of the matrix weight constant
in \eqref{e4.52} matches the one in \eqref{e4.53}. In particular,
if $p=2$, then the bound $[W]_{\mathscr{A}^{
\operatorname{loc}}_{p}(r)}^{1+\frac{1}{p-1}-\frac{1}{p}}$
in \eqref{e4.52} becomes precisely
$[W]_{\mathscr{A}^{\operatorname{loc}}_{p}(r)}^{\frac 32}$.
However, it is still unknown whether this bound
in \eqref{e4.52} is sharp.
\end{remark}

To prove Theorem \ref{thm-CZloc}, we need the
following boundedness of integral operators
with exponential decay, whose proof
borrows some ideas from the proof of \cite[(2.20)]{ry01}.

\begin{lemma}\label{lem-vector-K_a}
Let $r\in(0,\infty)$, $p\in(1,\infty)$, and
$W\in\mathscr{A}^{\operatorname{loc}}_{p}(r)$.
If $t$ satisfies \eqref{eq-t}, then there exists a positive
constant $C$, depending on $n$, $m$, $p$, and $r$,
such that, for any $\vec{f}\in L^p$,
\begin{align*}
\left\|W^{\frac{1}{p}}J_{e^{-t|\cdot|}}
\left(W^{-\frac{1}{p}}\vec{f}\right)\right\|_{L^p}\leq
C[W]^{\frac{2}{p}}_{
\mathscr{A}^{\operatorname{loc}}_{p}(r)}
\left\|\vec{f}\right\|_{L^p},
\end{align*}
where $J_{e^{-t|\cdot|}}$ is the operator defined in
\eqref{eq-def-Jrho} with $\rho:=e^{-t|\cdot|}$.
\end{lemma}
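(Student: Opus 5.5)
We decompose the kernel $e^{-t|x-y|}$ into pieces adapted to a grid of cubes of edge length comparable to $r$. Throughout, the integral defining $J_{e^{-t|\cdot|}}$ is exponentially damped at scale $t^{-1}$, and the weight can grow at most exponentially at scale $r$; the lower bound on $t$ in \eqref{eq-t} is meant to make the decay win. Concretely, let $\Gamma$ be the collection of cubes with pairwise disjoint interiors, edge length $r/2$ and lower-left corners in $\frac r2\mathbb{Z}^n$. For $x\in Q\in\Gamma$ we write
\begin{align*}
\left|W^{\frac1p}(x)J_{e^{-t|\cdot|}}\left(W^{-\frac1p}\vec f\right)(x)\right|
\leq\sum_{k\in\mathbb{Z}^n}\int_{Q+\frac r2k}e^{-t|x-y|}
\left\|W^{\frac1p}(x)W^{-\frac1p}(y)\right\|\left|\vec f(y)\right|\,dy.
\end{align*}
For $|k|_\infty\geq 2$ we have $e^{-t|x-y|}\leq e^{-t\frac r2(|k|_\infty-1)}$, and for $|k|_\infty\le1$ we only use $e^{-t|x-y|}\leq 1$.

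\textbf{Step 1: the neighbouring terms.} When $|k|_\infty\le1$, the cubes $Q$ and $Q+\frac r2k$ both lie in a cube of edge length $r$. By H\"older's inequality and \eqref{eq-locApq} (with $p=q$), the $L^p(Q)$-norm of such a term is at most $|Q|\,[W]^{1/p}_{\mathscr{A}^{\rm loc}_p(r)}\|\vec f\mathbf 1_{Q+\frac r2k}\|_{L^p}$, up to a constant depending on $n$. This is exactly the computation already done for $E_j$ in the proof of Theorem \ref{thm-HAAR-Aploc}. Summing over $Q$ and using bounded overlap gives a bound $\lesssim_r[W]^{1/p}$.

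\textbf{Step 2: the far terms.} This is the main obstacle. We need a growth estimate for $\|W^{\frac1p}(x)W^{-\frac1p}(y)\|$ when $x\in Q$, $y\in Q':=Q+\frac r2k$ are far apart, at least in an averaged sense. Following \cite[(2.20)]{ry01}, we join $Q$ to $Q'$ by a chain of $N\sim|k|_\infty$ cubes $Q=R_0,R_1,\dots,R_N=Q'$ of the grid, consecutive ones adjacent. We pass through reducing operators: by H\"older's inequality the $L^p(Q)$-norm of the $k$-th term is controlled by $|Q|\,\|A_{Q}\widetilde A_{Q'}\|\,\|\vec f\mathbf1_{Q'}\|_{L^p}$, where $A$ and $\widetilde A$ are the reducing operators of orders $p$ and $p'$ of Lemma \ref{lem-W-dualW-dkps}. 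We then telescope,
\begin{align*}
\left\|A_{R_0}\widetilde A_{R_N}\right\|\le\prod_{i=0}^{N-1}\left\|A_{R_i}A_{R_{i+1}}^{-1}\right\|\,\left\|A_{R_N}\widetilde A_{R_N}\right\|.
\end{align*}
Each factor is bounded by $\|A_{R_i}\widetilde A_{R_{i+1}}\|$ via \cite[Lemma 2.8]{BCHYY}, and each of these is bounded via Lemma \ref{lem:AQ1AQ2}, since $R_i\cup R_{i+1}$ lies in a cube of edge length $r$. Hence every factor is at most $C_0[W]^{1/p}$. The constant must be tracked carefully, so that the chain grows like $(c_n[W])^{N/p}$ with $c_n$ comparable to $2^{np}$ and not worse. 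The key point of this step is to obtain
\begin{align*}
\left\|A_{Q}\widetilde A_{Q+\frac r2k}\right\|\lesssim[W]^{\frac1p}\left(2^{np}[W]\right)^{\frac{2|k|_\infty}{p}},
\end{align*}
or something of this type, so that the decay $e^{-tr|k|_\infty/2}$ beats it. The factor $3$ in \eqref{eq-t} is designed to absorb exactly this loss.

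\textbf{Step 3: summation.} With the bound from Step 2, the $L^p$-norm of the $k$-th shifted operator is at most
\begin{align*}
C[W]^{\frac1p}\exp\left(|k|_\infty\left[\frac{2}{p}\ln\left(2^{np}[W]\right)-\frac{tr}{2}\right]\right).
\end{align*}
By \eqref{eq-t}, $\frac{tr}{2}>\frac{3}{2p}\ln(2^{np}[W])$. The exponent needs to be arranged to be at most $-\frac{c}{p}|k|_\infty\ln(2^{np}[W])\le -c'|k|_\infty$, so that $\sum_k$ converges uniformly in $W$ since $[W]\ge1$ (Corollary \ref{coro-[W]geq1}). The constants in the chain will therefore have to be set up precisely so as to be consistent with the factor $3$.

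The chain estimate only gives a product of reducing operators, not a pointwise bound. To convert it, we apply H\"older's inequality on $Q\times Q'$ as in Step 1, bounding $\int_Q(\int_{Q'}\|W^{1/p}(x)W^{-1/p}(y)\|^{p'})^{p/p'}dx$ by $\|A_Q\widetilde A_{Q'}\|^p$ up to constants. The leftover factor $[W]^{1/p}$ from the endpoints of the chain, together with possibly one more factor from an absorbed step, gives the total power $[W]^{2/p}$. Finally we sum over $k$ with Minkowski's inequality, and over $Q$ with bounded overlap, which yields the claimed bound $C[W]^{2/p}\|\vec f\|_{L^p}$.
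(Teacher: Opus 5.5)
There is a genuine gap, and it is the step you leave open yourself: the far-term growth estimate of Step 2 is never established, and the route you propose for it cannot be reconciled with \eqref{eq-t}. The rest of your structure (grid, near/far split, H\"older through reducing operators, Minkowski over shifts, bounded overlap) is fine.

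\textbf{Why your Step 2 does not close.} With the bound you display, each link of the chain costs $(2^{np}[W])^{2/p}$. But \eqref{eq-t} only guarantees a gain $e^{-tr/2}<(2^{np}[W])^{-3/(2p)}$ per link. So the bracket in Step 3 is only known to satisfy $\frac2p\ln(2^{np}[W])-\frac{tr}{2}<\frac1{2p}\ln(2^{np}[W])$, which is positive, and the series over $k$ is not controlled.

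Even the sharper bound your chain actually yields,
$$\|A_{R_i}A_{R_{i+1}}^{-1}\|\le\|A_{R_i}\widetilde A_{R_{i+1}}\|\le C_0[W]^{1/p},$$
is not enough. Summability needs $C_0[W]^{1/p}e^{-tr/2}<1$ uniformly, which means essentially $C_0<2^{3n/2}$ (take $[W]$ close to $1$). However, $C_0$ collects the constants of Lemmas \ref{lem-W-dualW-dkps} and \ref{lem:AQ1AQ2}. These carry the factor $2^n$ from the volume ratio and powers of $\sqrt m$ from \eqref{eq-reduce}.

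A concrete failure: take $W\equiv I_m$ with the choice $A_Q=\widetilde A_Q=\sqrt m\,I_m$, which \eqref{eq-reduce} permits. Then $[W]=1$, but every link factor $\|A_{R_i}\widetilde A_{R_{i+1}}\|$ equals $m$, so your chain bound is of order $m^N$. Meanwhile \eqref{eq-t} allows $e^{tr/2}$ arbitrarily close to $2^{3n/2}$, so for $m\ge 2^{3n/2}$ the resulting series diverges. The threshold in \eqref{eq-t} does not depend on $m$, so no bookkeeping can rescue a chain that pays a reducing-operator constant at every link.

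\textbf{The missing idea.} Iterate on scalar weights and pass to reducing operators only once. For fixed $\vec u$, set $w_{\vec u}:=|W^{1/p}\vec u|^p$; by Lemma \ref{lem-W-w}, $[w_{\vec u}]_{A^{\operatorname{loc}}_p(r)}\le[W]_{\mathscr{A}^{\operatorname{loc}}_p(r)}$. The local doubling inequality $w_{\vec u}(R)\le[w_{\vec u}]_{A^{\operatorname{loc}}_p(r)}(|R|/|E|)^p\,w_{\vec u}(E)$, for $E\subset R$ and $\ell(R)=r$, gives for adjacent cubes of your grid
$$\fint_{R_i}w_{\vec u}\le2^{np}[W]\fint_{R_{i+1}}w_{\vec u},$$
with no dependence on $m$. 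Iterating, $\fint_Qw_{\vec u}\le(2^{np}[W])^N\fint_{Q'}w_{\vec u}$, and hence
$$\|A_Q\widetilde A_{Q'}\|\le\sqrt m\,(2^n[W]^{1/p})^N\|A_{Q'}\widetilde A_{Q'}\|.$$
Now the per-link ratio against $e^{-tr/2}$ is below $2^{-n/2}$ under \eqref{eq-t}, and your Steps 1 and 3 go through with total power $[W]^{2/p}$.

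\textbf{How the paper does it.} The paper uses the same principle, organised differently. With a grid of side $r$, it proves
$$\int_{\mathbb{R}^n}\|W^{1/p}(x)A_Q^{-1}\|^pe^{-\alpha pd(x,Q)}\,dx\lesssim[W]|Q|$$
by scalar doubling shell by shell: $(k+1)Q\setminus kQ$ against $kQ\setminus(k-1)Q$, with constant $M=2^{np}[W]$ and $\alpha=\frac{5}{2pr}\ln M$. The $m$-dependence enters only once, through $\|\cdot\|^p\sim\sum_i|\cdot\,e_i|^p$. It then splits $e^{-td}=e^{-\alpha d}e^{-(t-\alpha)d}$ and applies H\"older in the sum over cubes; the second factor is $\ell^{p'}$-summable because $t-\alpha>\frac1{2pr}\ln M$. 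The second factor of $[W]$ comes from the $\mathscr{A}^{\operatorname{loc}}_p(r)$ condition on the single cube $Q$.
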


\begin{proof}
Let $\vec{f}\in L^p$.
First, we partition $\mathbb{R}^n$ into a grid $\Gamma$ of cubes with
edge length $r$ and disjoint interiors.
Applying the definition of the operator $J_{e^{-t|\cdot|}}$ and 	
the grid structure of $\mathbb{R}^n$, we obtain,
for any $x\in\mathbb{R}^n$,
\begin{align}\label{eq-1}
\left|W^{\frac{1}{p}}(x)J_{e^{-t|\cdot|}}
\left(W^{-\frac{1}{p}}\vec{f}\right)(x)\right|
&\leq\int_{\mathbb{R}^n}\left|W^{\frac{1}{p}}(x)
W^{-\frac{1}{p}}(y)\vec{f}(y)\right|e^{-t|x-y|}\,dy\\
&=\sum_{Q\in\Gamma}\int_{Q}
\left|W^{\frac{1}{p}}(x)
W^{-\frac{1}{p}}(y)\vec{f}(y)\right|
e^{-t|x-y|}\,dy\nonumber\\
&\leq\sum_{Q\in\Gamma}e^{-t d(x, Q)}
\int_{Q}\left|W^{\frac{1}{p}}(x)
W^{-\frac{1}{p}}(y)\vec{f}(y)\right|\,dy,\nonumber
\end{align}
where $d(x, Q):=\inf_{y\in Q}|x-y|$.

Let $M:=2^{np}[W]_{\mathscr{A}^{\operatorname{loc}}_{p}(r)}$
and $\{A_Q\}_{\operatorname{cube}\,Q\subset\mathbb{R}^n}$
be a family of reducing operators of order $p$ for $W$.
Next, we claim that, if
\begin{align}\label{eq-alpha}
\alpha=\frac{5}{2pr}\ln M,
\end{align}
then, for any $Q\in\Gamma$,
\begin{align}\label{eq-intRn<1}
\int_{\mathbb{R}^n}
\left\|W^{\frac{1}{p}}(x)A^{-1}_{Q}\right\|^p
e^{-\alpha p d(x, Q)}\,dx\lesssim
[W]_{\mathscr{A}^{\operatorname{loc}}_{p}(r)}|Q|.
\end{align}

To show \eqref{eq-intRn<1}, fix $Q\in\Gamma$.
Let $\{e_i\}_{i=1}^m$ be an orthonormal basis of $\mathbb{C}^m$.
Applying the equivalence norm theorem in finite dimensional spaces,
we conclude that, for any $k\in\mathbb{N}$,
\begin{align}\label{eq-equi-kQ}
\int_{(k+1)Q\setminus k Q}
\left\|W^{\frac{1}{p}}(x)A^{-1}_{Q}\right\|^p\,dx
\sim\sum_{i=1}^{m}\int_{(k+1)Q\setminus k Q}
\left|W^{\frac{1}{p}}(x)A^{-1}_{Q}e_i\right|^p\,dx.
\end{align}
For any $k\in\mathbb{N}$, dividing each edge of $(k+1)Q$
into $k+1$ equal parts, we obtain $(k+1)^n$ cubes of
edge length $r$. Remove those cubes contained in $(k-1)Q$
and denote the remaining cubes by $\{R_j\}_{j=1}^{N_k}$,
where $0Q:=\emptyset$ and $N_k:=(k+1)^n-(k-1)^n$.
By this construction, we find that, for any $k\in\mathbb{N}$,
the cubes $\{R_j\}_{j=1}^{N_k}$
have pairwise disjoint interiors,
$$(k+1)Q\setminus (k-1)Q= \bigcup_{j=1}^{N_k} R_j,$$
and, for any $j\in\{1,\dots,N_k\}$,
$$|R_j\cap kQ|\geq \left(\frac{r}{2}\right)^n.$$
Since the edge length of $R_j$ is exactly
$r$, the local doubling property of
local $A^{\operatorname{loc}}_p(r)$ holds
(see, for instance, \cite[Proposition 2.2.4]{ooi24}).
From this, the properties of $\{R_j\}_{j=1}^{N_k}$ above,
and Lemma \ref{lem-W-w}, we deduce that, for any
$k\in\mathbb{N}$ and $i\in\{1,\dots,m\}$,
\begin{align}\label{eq-kQtok-1Q}
\int_{(k+1)Q\setminus k Q}
\left|W^{\frac{1}{p}}(x)A^{-1}_{Q}e_i\right|^p\,dx
&\leq\sum_{j=1}^{N_k}\int_{R_j}
\left|W^{\frac{1}{p}}(x)A^{-1}_{Q}e_i\right|^p\,dx\\
&\leq2^{np}[W]_{\mathscr{A}^{\operatorname{loc}}_p(r)}
\sum_{j=1}^{N_k}\int_{R_j\cap kQ}
\left|W^{\frac{1}{p}}(x)A^{-1}_{Q}e_i\right|^p\,dx\nonumber\\
&=M\int_{kQ\setminus (k-1) Q}
\left|W^{\frac{1}{p}}(x)A^{-1}_{Q}e_i\right|^p\,dx.\nonumber
\end{align}
Iterating \eqref{eq-kQtok-1Q} $k$ times and applying
\eqref{eq-equi-kQ}, we conclude that, for any $k\in\mathbb{N}$,
\begin{align*}
\int_{(k+1)Q\setminus k Q}
\left|W^{\frac{1}{p}}(x)A^{-1}_{Q}e_i\right|^p\,dx
&\leq M^k
\int_{Q}\left|W^{\frac{1}{p}}(x)A^{-1}_{Q}e_i\right|^p\,dx,
\end{align*}
and hence
\begin{align}\label{eq-intk-Mk}
\int_{(k+1)Q\setminus k Q}
\left\|W^{\frac{1}{p}}(x)A^{-1}_{Q}\right\|^p\,dx
&\sim\sum_{i=1}^{m}\int_{(k+1)Q\setminus k Q}
\left|W^{\frac{1}{p}}(x)A^{-1}_{Q}e_i\right|^p\,dx\\
&\leq M^k
\sum_{i=1}^{m}\int_{Q}\left|W^{\frac{1}{p}}(x)A^{-1}_{Q}e_i\right|^p\,dx\nonumber\\
&\sim M^k
\int_{Q}\left\|W^{\frac{1}{p}}(x)A^{-1}_{Q}\right\|^p\,dx,\nonumber
\end{align}
where the last equivalence follows from the equivalence
norm theorem in finite dimensional spaces.
Note that, for any $k\in\mathbb{N}$ and $x\in(k+1)Q \setminus kQ$,
$d(x,Q)\geq\frac{(k-1)r}{2}$. This, together with
\eqref{eq-intk-Mk} and \eqref{eq-reduce}, further implies that
\begin{align*}
&\int_{\mathbb{R}^n} \left\|W^{\frac{1}{p}}(x)A^{-1}_{Q}\right\|^p
e^{-\alpha p d(x, Q)}\,dx \\
&\quad\leq \int_{Q}\left\|W^{\frac{1}{p}}(x)A^{-1}_{Q}\right\|^p \,dx +
\sum_{k\in\mathbb{N}} e^{-\alpha p \frac{k-1}{2}r} \int_{(k+1)Q \setminus kQ} \left\|W^{\frac{1}{p}}(x)A^{-1}_{Q}\right\|^p \,dx \\
&\quad\lesssim \int_{Q}\left\|W^{\frac{1}{p}}(x)A^{-1}_{Q}\right\|^p \,dx +
\sum_{k\in\mathbb{N}} e^{-\alpha p \frac{k-1}{2}r} M^k \int_{Q}\left\|W^{\frac{1}{p}}(x)A^{-1}_{Q}\right\|^p \,dx \\
&\quad\sim\left[1+M\sum_{k\in\mathbb{N}}
\left(M e^{-\frac{\alpha p r}{2}}\right)^{k-1} \right]
|Q|=: \Omega.
\end{align*}
If $\alpha$ satisfies \eqref{eq-alpha}, then
$M e^{-\frac{\alpha p r}{2}}=M^{-\frac{1}{4}}$.
Due to $[W]_{\mathscr{A}^{\operatorname{loc}}_{p}(r)}\geq1$
(see Corollary \ref{coro-[W]geq1}),
we have $M\geq 2^{np}\geq 2$. Thus, the series in $\Omega$ converges
and hence
$$ \Omega \lesssim (1 + M) |Q| \lesssim
[W]_{\mathscr{A}^{\operatorname{loc}}_{p}(r)}|Q|. $$
This completes the proof of \eqref{eq-intRn<1}.

Applying H\"{o}lder's inequality to \eqref{eq-1},
we conclude that
\begin{align}\label{eq-2}
&\left\|W^{\frac{1}{p}}J_{e^{-t|\cdot|}}
\left(W^{-\frac{1}{p}}\vec{f}\right)\right\|^p_{L^p}\\
&\quad\leq\int_{\mathbb{R}^n}
\left[\sum_{Q\in\Gamma}e^{-t d(x, Q)}
\int_{Q}\left|W^{\frac{1}{p}}(x)
W^{-\frac{1}{p}}(y)\vec{f}(y)\right|\,dy\right]^p\,dx\nonumber\\
&\quad\leq\int_{\mathbb{R}^n}\sum_{Q\in\Gamma}
\left\|W^{\frac{1}{p}}(x)A^{-1}_{Q}\right\|^p
e^{-\alpha p d(x, Q)}\left[\int_{Q}
\left|A_{Q}W^{-\frac{1}{p}}(y)\vec{f}(y)\right|\,dy\right]^p\nonumber\\
&\quad\quad\times
\left[\sum_{Q\in\Gamma}e^{-(t-\alpha)p^{\prime}
d(x, Q)}\right]^{\frac{p}{p^{\prime}}}\,dx.\nonumber
\end{align}
We now estimate the last sum in \eqref{eq-2}.
Observe that, for any $x\in\mathbb{R}^n$,
there exists a unique cube $Q_x\in\Gamma$
(up to boundaries) containing $x$.
For any $k\in\mathbb{N}$, let
\begin{align*}
R_k := \left\{ Q \in \Gamma : Q \subset (2k+1)Q_x
\setminus (2k-1)Q_x \right\}.
\end{align*}
The cardinality of $R_k$ is exactly $(2k+1)^n-(2k-1)^n$ and,
for any $Q \in R_k$, $d(x, Q) \ge (k-1)r$. Therefore, we have
\begin{align*}
\sum_{Q\in\Gamma}e^{-(t-\alpha)p^{\prime} d(x, Q)}
&= e^{-(t-\alpha)p^{\prime} d(x, Q_x)} +
\sum_{k\in\mathbb{N}}\sum_{Q\in R_k}
e^{-(t-\alpha)p^{\prime} d(x, Q)}\\
&\leq 1+\sum_{k\in\mathbb{N}}
e^{-(t-\alpha)p^{\prime}(k-1)r}[(2k+1)^n-(2k-1)^n]\\
&\lesssim 1+\sum_{k\in\mathbb{N}}k^{n-1}
e^{-(t-\alpha)p^{\prime}(k-1)r} <\infty.
\end{align*}
Inserting this uniform estimate into \eqref{eq-2}
and using \eqref{eq-intRn<1}, we conclude that
\begin{align}\label{eq-3}
\left\|W^{\frac{1}{p}}J_{e^{-t|\cdot|}}
\left(W^{-\frac{1}{p}}\vec{f}\right)\right\|^p_{L^p}
&\lesssim\sum_{Q\in\Gamma}\int_{\mathbb{R}^n}
\left\|W^{\frac{1}{p}}(x)A^{-1}_{Q}\right\|^p
e^{-\alpha p d(x, Q)}\,dx\\
&\quad\times\left[\int_{Q}\left|A_{Q}W^{-\frac{1}{p}}(y)
\vec{f}(y)\right|\,dy\right]^p\nonumber\\
&\lesssim[W]_{\mathscr{A}^{\operatorname{loc}}_{p}(r)}
\sum_{Q\in\Gamma}|Q|\left[\int_{Q}\left|A_{Q}W^{-\frac{1}{p}}(y)
\vec{f}(y)\right|\,dy\right]^p.\nonumber
\end{align}
From H\"{o}lder's inequality, \eqref{eq-locApq}, and
\eqref{eq-W-dualW}, we deduce that, for any $Q\in\Gamma$,
\begin{align*}
\int_{Q}\left|A_{Q}W^{-\frac{1}{p}}(y)
\vec{f}(y)\right|\,dy
&\leq
\left[\int_{Q}\left\|A_{Q}W^{-\frac{1}{p}}(y)
\right\|^{p^{\prime}}\,dy\right]^{\frac{1}{p^{\prime}}}
\left[\int_{Q}\left|\vec{f}(y)\right|^p\,dy\right]^{\frac{1}{p}}\\
&\sim\left[\int_{Q}
\left[\fint_{Q}\left\|W^{\frac{1}{p}}(x)W^{-\frac{1}{p}}(y)
\right\|^p\,dx\right]^\frac{p^{\prime}}{p}\,dy\right]^{\frac{1}{p^{\prime}}}
\left[\int_{Q}\left|\vec{f}(y)\right|^p\,dy\right]^{\frac{1}{p}}\\
&\leq|Q|^{\frac{1}{p^{\prime}}}
\left[W^{-\frac{p^{\prime}}{p}}\right]^{\frac{1}{p^{\prime}}}
_{\mathscr{A}^{\operatorname{loc}}_{p^{\prime}}(r)}
\left[\int_{Q}\left|\vec{f}(y)\right|^p\,dy\right]^{\frac{1}{p}}\\
&\sim|Q|^{\frac{1}{p^{\prime}}}
[W]^{\frac{1}{p}}
_{\mathscr{A}^{\operatorname{loc}}_{p}(r)}
\left[\int_{Q}\left|\vec{f}(y)\right|^p\,dy\right]^{\frac{1}{p}}.
\end{align*}
This, together with \eqref{eq-3}, further implies that
\begin{align*}
\left\|W^{\frac{1}{p}}J_{e^{-t|\cdot|}}
\left(W^{-\frac{1}{p}}\vec{f}\right)\right\|^p_{L^p}
\lesssim[W]^{2}_{\mathscr{A}^{\operatorname{loc}}_{p}(r)}
\sum_{Q\in\Gamma}|Q|^p\int_{Q}\left|\vec{f}(y)\right|^p\,dy
\sim [W]^{2}_{\mathscr{A}^{\operatorname{loc}}_{p}(r)}
\int_{\mathbb{R}^n}\left|\vec{f}(y)\right|^p\,dy.
\end{align*}
Taking the $p$-th root on both sides yields the desired estimate
\begin{align*}
\left\|W^{\frac{1}{p}}J_{e^{-t|\cdot|}}
\left(W^{-\frac{1}{p}}\vec{f}\right)\right\|_{L^p}
\leq C [W]^{\frac{2}{p}}
_{\mathscr{A}^{\operatorname{loc}}_{p}(r)}
\left\|\vec{f}\right\|_{L^p}.
\end{align*}
This completes the proof of Lemma \ref{lem-vector-K_a}.
\end{proof}

We now prove Theorem \ref{thm-CZloc}.

\begin{proof}[Proof of Theorem \ref{thm-CZloc}]
Without loss of generality, we may assume that $\vec{f}$
is a vector-valued function on $\mathbb{R}^n$ such that
$W^{-\frac{1}{p}}\vec{f}$ is bounded and has compact support
because the set of bounded functions on $\mathbb{R}^n$
with compact support is dense in $L^p(W)$
(see \cite[Proposition 3.6]{cmr16}).
Next, we partition $\mathbb{R}^n$ into a grid $\Gamma$ of cubes with
edge length $\frac{r}{3}$ and disjoint interiors.
For any $Q\in \Gamma$, let $\widetilde{Q}$ be the cube with
edge length $r$ and the same center as $Q$.
Note that, for any $Q\in \Gamma$,
$\vec{f} = \vec{f}\mathbf{1}_{\widetilde{Q}}+
\vec{f}\mathbf{1}_{{\widetilde{Q}}^\complement}$ and hence
\begin{align*}
\int_{Q}\left|W^{\frac{1}{p}}(x)T
\left(W^{-\frac{1}{p}}\vec{f}\right)(x)\right|^p\,dx
&\lesssim\int_{Q}\left|W^{\frac{1}{p}}(x)T
\left(W^{-\frac{1}{p}}\vec{f}\mathbf{1}_{\widetilde{Q}}\right)(x)
\right|^p\,dx\\
&\quad+	\int_{Q}\left|W^{\frac{1}{p}}(x)T
\left(W^{-\frac{1}{p}}\vec{f}\mathbf{1}_{{\widetilde{Q}}^\complement}\right)(x)
\right|^p\,dx\\
&=:\operatorname{I}_Q+\operatorname{II}_Q.
\end{align*}

To estimate  $\operatorname{I}_Q$, by Theorem \ref{thm-extension},
we find that there exists $V\in\mathscr{A}_p$
such that $V=W$ on $\widetilde{Q}$ and
\begin{align}\label{eq-V-W}
[V]_{\mathscr{A}_{p}}\leq
3^{np} [W]_{\mathscr{A}^{\operatorname{loc}}_{p}(r)}.
\end{align}
Since $T$ is also a $\delta$-Calder\'{o}n--Zygmund operator,
from \eqref{eq-V-W} and Theorem \ref{thm-CZ}, we
deduce that, for any $Q\in \Gamma$,
\begin{align}\label{eq-est-I-CZ}
\operatorname{I}_Q
&=\int_{Q}\left|V^{\frac{1}{p}}(x)T
\left(V^{-\frac{1}{p}}\vec{f}\mathbf{1}_{\widetilde{Q}}
\right)(x)\right|^p\,dx
\leq \int_{\mathbb{R}^n}\left|V^{\frac{1}{p}}(x)T
\left(V^{-\frac{1}{p}}\vec{f}\mathbf{1}_{\widetilde{Q}}
\right)(x)\right|^p\,dx\\
&\lesssim[V]_{\mathscr{A}_p}^{p+\frac{p}{p-1}-1}
\int_{\widetilde{Q}}\left|\vec{f}(x)\right|^p\,dx
\lesssim[W]_{\mathscr{A}^{\operatorname{loc}}_{p}(r)}^{p+\frac{p}{p-1}-1}
\int_{\widetilde{Q}}\left|\vec{f}(x)\right|^p\,dx.\nonumber
\end{align}

Next, we estimate $\operatorname{II}_Q$. Using the
exponential decay assumption \eqref{eq-decay-condition}
of $T$, we conclude that, for any $x\in Q$,
\begin{align*}
\left|W^{\frac{1}{p}}(x)T\left(W^{-\frac{1}{p}}\vec{f}
\mathbf{1}_{{\widetilde{Q}}^\complement}\right)(x)\right|
&\leq\int_{{\widetilde{Q}}^\complement}\left|K(x,y)\right| \left|W^{\frac{1}{p}}(x)
W^{-\frac{1}{p}}(y)\vec{f}(y)\right|\,dy\\
&\lesssim\int_{\mathbb{R}^n} e^{-t|x-y|}\left|W^{\frac{1}{p}}(x)
W^{-\frac{1}{p}}(y)\vec{f}(y)\right|\,dy
\end{align*}
and hence
\begin{align}\label{eq-est-II-decay}
\operatorname{II}_Q \lesssim\int_{Q}
\left[\int_{\mathbb{R}^n}
e^{-t|x-y|}\left|W^{\frac{1}{p}}(x)
W^{-\frac{1}{p}}(y)\vec{f}(y)\right|\,dy\right]^p
\,dx.
\end{align}
Observe that
\begin{align}\label{eq-sum}
\int_{\mathbb{R}^n}\left|W^{\frac{1}{p}}(x)T
\left(W^{-\frac{1}{p}}\vec{f}\right)(x)\right|^p\,dx
&\lesssim\sum_{Q\in\Gamma} \operatorname{I}_Q
+\sum_{Q\in\Gamma} \operatorname{II}_Q.
\end{align}
Applying \eqref{eq-est-I-CZ} and the fact that all
the cubes $\{\widetilde{Q}\}_{Q\in\Gamma}$ have bounded overlap,
we obtain
\begin{align}\label{eq-sumI}
\sum_{Q\in\Gamma} \operatorname{I}_Q
&\lesssim [W]_{\mathscr{A}^{\operatorname{loc}}_{p}(r)}^{p+\frac{p}{p-1}-1}
\sum_{Q\in\Gamma} \int_{\widetilde{Q}}\left|\vec{f}(x)\right|^p\,dx
\lesssim [W]_{\mathscr{A}^{\operatorname{loc}}_{p}(r)}^{p+\frac{p}{p-1}-1}
\int_{\mathbb{R}^n}\left|\vec{f}(x)\right|^p\,dx.
\end{align}
By \eqref{eq-est-II-decay} and Lemma \ref{lem-vector-K_a}
with $t$ satisfying \eqref{eq-t}, we find that
\begin{align}\label{eq-sumII}
\sum_{Q\in\Gamma} \operatorname{II}_Q
&\lesssim \sum_{Q\in\Gamma} \int_{Q}
\left[\int_{\mathbb{R}^n}
e^{-t|x-y|}\left|W^{\frac{1}{p}}(x)
W^{-\frac{1}{p}}(y)\vec{f}(y)\right|\,dy\right]^p\,dx\\
&= \int_{\mathbb{R}^n}
\left[\int_{\mathbb{R}^n}
e^{-t|x-y|}\left|W^{\frac{1}{p}}(x)
W^{-\frac{1}{p}}(y)\vec{f}(y)\right|\,dy\right]^p\,dx\nonumber\\
&\lesssim [W]_{\mathscr{A}^{\operatorname{loc}}_{p}(r)}^2
\int_{\mathbb{R}^n}\left|\vec{f}(x)\right|^p\,dx.\nonumber
\end{align}
Since $[W]_{\mathscr{A}^{\operatorname{loc}}_{p}(r)} \geq 1$
(see Corollary \ref{coro-[W]geq1}) and, for any $p \in (1, \infty)$,
$p + \frac{p}{p-1} - 1> 2$, inserting estimates \eqref{eq-sumII}
and \eqref{eq-sumI} into \eqref{eq-sum} yields
\begin{align*}
\int_{\mathbb{R}^n}\left|W^{\frac{1}{p}}(x)T
\left(W^{-\frac{1}{p}}\vec{f}\right)(x)\right|^p\,dx
&\lesssim \left\{ [W]_{\mathscr{A}^{\operatorname{loc}}_{p}(r)}^{p+\frac{p}{p-1}-1} + [W]_{\mathscr{A}^{\operatorname{loc}}_{p}(r)}^2\right\} \int_{\mathbb{R}^n}\left|\vec{f}(x)\right|^p\,dx \\
&\lesssim [W]_{\mathscr{A}^{\operatorname{loc}}_{p}(r)}^{p+\frac{p}{p-1}-1}
\int_{\mathbb{R}^n}\left|\vec{f}(x)\right|^p\,dx.
\end{align*}
Taking the $p$-th root on both sides, we obtain
\begin{align*}
\left\|W^{\frac{1}{p}}T
\left(W^{-\frac{1}{p}}\vec{f}\right)\right\|_{L^p}
\lesssim[W]_{\mathscr{A}^{\operatorname{loc}}_{p}(r)}^{1+\frac{1}{p-1}
-\frac{1}{p}}\left\|\vec{f}\right\|_{L^p}.
\end{align*}
This completes the proof of Theorem \ref{thm-CZloc}.
\end{proof}

As an application of Theorem \ref{thm-CZloc},
we show the boundedness of the Riesz transform associated with
Schr\"{o}dinger operators
$-\Delta+m^2I$ on local matrix-weighted Lebesgue spaces $L^p(W)$,
where $m\in(0,\infty)$, $\Delta$ is the Laplacian,
and $I$ is the identity operator.

\begin{example}\label{exam-Riesz-bessel}
Let $j \in \{1, \dots, n\}$ and $m \in (0,\infty)$.
The \emph{Riesz transform $\mathcal{R}_{j,m}$
associated with the Schr\"{o}dinger operator $-\Delta+m^2I$}
is defined by setting, 	for any $f \in L^2$
and $\xi:=(\xi_1,\dots,\xi_n)\in\mathbb{R}^n$,
$$\widehat{\mathcal{R}_{j,m} f}(\xi)
:=\frac{i\xi_j}{(|\xi|^2+m^2)^{\frac{1}{2}}}
\widehat{f}(\xi).$$
Next, we verify that $\mathcal{R}_{j,m}$ falls into our class of
Calder\'{o}n--Zygmund operators with exponential decay. Obviously,
it follows from Plancherel's theorem and
$$\left|\frac{i\xi_j}{(|\xi|^2+m^2)^{\frac{1}{2}}}\right|
\leq \frac{|\xi|}{|\xi|} = 1$$
that
$\mathcal{R}_{j,m}$ is bounded on $L^2$.
Let
$$G_{m}:=\left[\left(|\cdot|^2+m^2\right)^{-\frac{1}{2}}\right]^\vee.$$
By the proof of \cite[Proposition 1.2.5]{gra14b},
we find that, for any
$x \in \mathbb{R}^n \setminus \{\mathbf{0}\}$,
$$G_{m}(x) = C_n \int_0^\infty
e^{-\frac{|x|^2}{4s}} e^{-m^2 s} s^{\frac{1-n}{2}}\,\frac{ds}{s},$$
where $C_n$ is a positive constant depending on $n$.
Consequently, the integral kernel $K_{j,m}$ of $\mathcal{R}_{j,m}$
can be computed by taking the partial derivative $\partial_{x_j}$
inside the integral
and, for any $x,y\in\mathbb{R}^n$ with $x\neq y$,
\begin{align}\label{eq-K_j,m}
K_{j,m}(x,y) = -\frac{C_n}{2} (x-y)_j \int_0^\infty
e^{-(\frac{|x-y|^2}{4s} + m^2 s)} s^{-\frac{n+1}{2}}\,\frac{ds}{s}.
\end{align}
To verify the size condition for $K_{j,m}$,
applying the basic inequality that
$a^2+b^2\geq 2|a||b|$ for any $a,b\in\mathbb{R}$,
we obtain $\frac{|x-y|^2}{4s} + m^2 s \ge m|x-y|$ and hence
\begin{align}\label{eq-e}
e^{-(\frac{|x-y|^2}{4s} + m^2 s)}
= e^{-\frac{1}{2}(\frac{|x-y|^2}{4s} + m^2 s)}
e^{-\frac{1}{2}(\frac{|x-y|^2}{4s} + m^2 s)}
\le e^{-\frac{m}{2}|x-y|} e^{-\frac{|x-y|^2}{8s}}.
\end{align}
Inserting this estimate into \eqref{eq-K_j,m} yields
$$\left|K_{j,m}(x,y)\right| \lesssim |x-y| e^{-\frac{m}{2}|x-y|}
\int_0^\infty e^{-\frac{|x-y|^2}{8s}} s^{-\frac{n+1}{2}}\,\frac{ds}{s},$$
which, together with the change of variables
$u = \frac{|x-y|^2}{8s}$, further implies that
\begin{align}\label{eq-int-change}
\int_0^\infty e^{-\frac{|x-y|^2}{8s}} s^{-\frac{n+1}{2}}\,\frac{ds}{s}
&=\int_0^\infty e^{-u} \left(\frac{8u}{|x-y|^2}
\right)^{\frac{n+1}{2}}\,\frac{du}{u}\\
&\sim\int_{0}^{\infty}e^{-u}u^{\frac{n-1}{2}}\,du
\frac{1}{|x-y|^{n+1}}\sim\frac{1}{|x-y|^{n+1}}\nonumber
\end{align}
and
$$\left|K_{j,m}(x,y)\right|\lesssim
\frac{e^{-\frac{m}{2}|x-y|}}{|x-y|^n}.$$

To verify the smoothness condition \eqref{eq-smooth-condition}
for $K_{j,m}$, using \eqref{eq-K_j,m}, \eqref{eq-e}, and
the same change of variables $u=\frac{|x-y|^2}{8s}$ as in \eqref{eq-int-change},
we conclude that, for any $x,y\in\mathbb{R}^n$ with $x\neq y$,
\begin{align*}
|\nabla_x K_{j,m}(x,y)|&\lesssim \int_0^\infty
e^{-(\frac{|x-y|^2}{4s} + m^2 s)}
s^{-\frac{n+1}{2}} \frac{ds}{s}+|x-y|^2 \int_0^\infty
e^{-(\frac{|x-y|^2}{4s} + m^2 s)} s^{-\frac{n+3}{2}} \frac{ds}{s}\\
&\leq e^{-\frac{m}{2}|x-y|}\Big(\int_0^\infty
e^{-\frac{|x-y|^2}{8s}}
s^{-\frac{n+1}{2}} \frac{ds}{s}+|x-y|^2 \int_0^\infty
e^{-\frac{|x-y|^2}{8s}}s^{-\frac{n+3}{2}} \frac{ds}{s}\Big) \\
&\sim\frac{e^{-\frac{m}{2}|x-y|}}{|x-y|^{n+1}}.
\end{align*}
By symmetry, the same estimate for $\nabla_y K_{j,m}$ also holds.
Then it follows from the mean-value theorem that
$K_{j,m}$ satisfies the smoothness condition \eqref{eq-smooth-condition}.
Thus, $\mathcal{R}_{j,m}$ is a $(1, \frac{m}{2})$-Calder\'{o}n--Zygmund operator.
Moreover, let the notation be the same as in Theorem \ref{thm-CZloc}.
If $$m>\frac{6}{p r}\ln\left(2^{np}
[W]_{\mathscr{A}^{\operatorname{loc}}_{p}(r)}\right),$$
then $\mathcal{R}_{j,m}$ is bounded on $L^p(W)$.
\end{example}

\section{One-Dimensional Scalar Case:
Sharp Weighted Quantitative \\
Bounds of Local Maximal Operators}\label{s4}

In this section, we work in the special setting where $n=1$
and $m=1$ (the one-dimensional scalar case) and
show \eqref{eq-bound-maxloc-1} for all $p=q\in(1,\infty)$.
Thus, throughout this section, except Lemma \ref{lem:rhi}
and its Corollary \ref{cor-Aq}, the underlying space is $\mathbb{R}$.

For any $r\in(0,\infty)$, let the \emph{notation} $M_r:=M_{0,r}$ be
the \emph{local maximal operator}, where $M_{0,r}$ is
as in \eqref{eq-def-locfracmax} with $\alpha:=0$.
We next present the main result of this subsection,
which is established on $\mathbb{R}$.
\begin{theorem}\label{thm:strong}
Let $r\in(0,\infty)$, $p\in(1,\infty)$,
and $w \in A_p^{\operatorname{loc}}(r)$.
There exists a positive constant $C$, depending on
$p$, such that, for any $f\in L^p(w)$,
\begin{align}\label{eq-bound-locmax-n=1}
\left\|M_{r} f\right\|_{L^p(w)} \le C
[w]_{A_p^{\operatorname{loc}}(r)}^{\frac{1}{p-1}} \|f\|_{L^p(w)}.
\end{align}
\end{theorem}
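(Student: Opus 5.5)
The plan is to follow Buckley's classical argument, which combines a weak-type estimate at the exact scale $r$, the openness of the local $A_p$ class at that same scale, and Marcinkiewicz interpolation. The difficulty in the higher-dimensional matrix case (see Remark \ref{rmk-max}) is that covering arguments and the scale-lifting step of Theorem \ref{thm:Apq(1+eta)} force the weight constant at a larger scale. That is what produces the exponent $\frac{2}{p}$ in \eqref{eq-bound-maxloc-2}. On $\mathbb{R}$ I will avoid any enlargement of intervals, so only $[w]_{A_q^{\operatorname{loc}}(r)}$ ever appears.

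\emph{Step 1: weak type at scale $r$ with the exact constant.} Let $q\in(1,\infty)$ and let $I$ be an interval with $\ell(I)\le r$. By H\"older's inequality and Definition \ref{def-w-locAp},
$$\left(\fint_I|f|\right)^q\le [w]_{A_q^{\operatorname{loc}}(r)}\frac{1}{w(I)}\int_I|f|^qw.$$
Fix $\lambda>0$ and consider the set $\{M_rf>\lambda\}$. Each point of this set lies in an interval $I$ with $\ell(I)\le r$ and $\fint_I|f|>\lambda$. Using the total ordering of $\mathbb{R}$, I will apply the one-dimensional covering lemma: from any such family, one can extract a subfamily that covers each compact part of the union and has overlap at most $2$, and no dilation is needed. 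This yields
$$w(\{M_rf>\lambda\})\le 2[w]_{A_q^{\operatorname{loc}}(r)}\lambda^{-q}\|f\|_{L^q(w)}^q.$$

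\emph{Step 2: openness with quantitative control.} Set $\sigma:=w^{1-p'}$, which belongs to $A_{p'}^{\operatorname{loc}}(r)$ by Corollary \ref{cor-Ap-Ap'}. I need a reverse H\"older inequality for $\sigma$ on every interval $I$ of length $\le r$, with exponent $1+\delta$ where $\delta\sim[\sigma]^{-1}_{A_{p'}}\sim [w]^{-(p'-1)}_{A_p^{\operatorname{loc}}(r)}$. To obtain it at the same scale $r$, I will first use Theorem \ref{thm-extension} to extend $w$ from an interval of length $r$ containing $I$ to a global weight $V$ satisfying $[V]_{A_p}\le 3^{p}[w]_{A_p^{\operatorname{loc}}(r)}$. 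I will then apply the global sharp reverse H\"older inequality (Lemma \ref{lem:rhi}) to $V^{1-p'}$. Corollary \ref{cor-Aq} then gives $w\in A_{p-\varepsilon}^{\operatorname{loc}}(r)$ with $\varepsilon\sim (p-1)\delta\gtrsim [w]_{A_p^{\operatorname{loc}}(r)}^{-(p'-1)}$ and $[w]_{A_{p-\varepsilon}^{\operatorname{loc}}(r)}\lesssim[w]_{A_p^{\operatorname{loc}}(r)}$. I expect this to be the main obstacle: the whole argument depends on the openness being quantitative and holding at the same scale, with $\varepsilon$ of exactly this order. The extension theorem is what makes this possible without losing scale.

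\emph{Step 3: interpolation.} Apply Step 1 with $q=p-\varepsilon$, and use the trivial bound $\|M_r\|_{L^\infty\to L^\infty}\le1$. Marcinkiewicz interpolation with explicit constants then gives
$$\|M_r\|_{L^p(w)\to L^p(w)}\lesssim \left(\frac{p}{\varepsilon}\right)^{\frac1p}[w]_{A_{p-\varepsilon}^{\operatorname{loc}}(r)}^{\frac{1}{p-\varepsilon}}.$$
By Step 2, the factor $\varepsilon^{-1/p}$ is at most $[w]^{\frac{p'-1}{p}}=[w]^{\frac{1}{p(p-1)}}$. Moreover,
$$[w]^{\frac1{p-\varepsilon}-\frac1p}\le \exp\left(C\varepsilon\log[w]\right)\lesssim1,$$
since $\varepsilon\log[w]\lesssim[w]^{-(p'-1)}\log[w]$ is bounded. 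Combining these bounds gives
$$[w]^{\frac1p+\frac{1}{p(p-1)}}=[w]^{\frac{1}{p-1}},$$
which is exactly \eqref{eq-bound-locmax-n=1}.
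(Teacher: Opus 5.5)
Your proposal is correct and follows the paper's proof essentially step by step. It uses the weak-type bound with constant $2[w]_{A_q^{\operatorname{loc}}(r)}$ from the greedy one-dimensional covering with overlap at most two (Theorem \ref{thm:weak}), quantitative openness at the same scale via Theorem \ref{thm-extension} and the sharp reverse H\"older inequality (Corollary \ref{cor-Aq}), and Marcinkiewicz interpolation with the trivial $L^\infty$ bound. One minor point: Marcinkiewicz with weak-type constant $(2[w]_{A_q^{\operatorname{loc}}(r)})^{1/q}$ already yields the factor $(2[w]_{A_q^{\operatorname{loc}}(r)})^{1/p}$, so your extra step absorbing $[w]^{\frac{1}{p-\varepsilon}-\frac1p}$ is unnecessary, though harmless.
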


Its sharpness is presented in the following remark.

\begin{remark}
It immediately follows from Proposition \ref{prop-sharpness-fractmax}
that the exponent $\frac{1}{p-1}$ of the weight constant
in \eqref{eq-bound-locmax-n=1} is sharp. Furthermore,
since, in this setting, Theorem \ref{thm:strong} holds for
all $p\in(1,\infty)$, it is also natural to expect
that \eqref{eq-bound-maxloc-1} holds for the full ranges
of indices considered in Theorem \ref{thm-bound-maxloc},
which, even for $M_r$, is still unknown when $p\in (2,\infty)$
and $n>1$ or $m>1$.
\end{remark}

The proof of Theorem \ref{thm:strong} relies on the
Marcinkiewicz interpolation and the quantitative
reverse H\"{o}lder inequality for local weight classes.
To this end, we first establish the
weak $(p,p)$ bound for $M_{r}$.
In what follows, for any scalar weight $w$ on $\mathbb{R}$
and for any measurable set $E\subset\mathbb{R}$, let
$$w(E):=\int_{E}w(x)\,dx.$$

\begin{theorem}\label{thm:weak}
Let $r\in(0,\infty)$, $p\in(1,\infty)$,
and $w \in A_p^{\operatorname{loc}}(r)$.
Then, for any $f \in L^p(w)$ and $\lambda\in(0,\infty)$,
$$w\left(\left\{x \in \mathbb{R} : M_{r}
f(x) > \lambda\right\}\right) \le
2\frac{[w]_{A_p^{\operatorname{loc}}(r)}}{\lambda^p}
\int_{\mathbb{R}} |f(y)|^p w(y) \,dy.$$
\end{theorem}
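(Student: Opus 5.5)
The plan is to run the standard one-dimensional covering argument for the level set, using only intervals of length at most $r$, so that the local constant $[w]_{A_p^{\operatorname{loc}}(r)}$ enters exactly once, through a single-interval estimate.

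\textbf{Step 1 (single-interval estimate).} Fix an interval $I$ with $\ell(I)\le r$. By H\"older's inequality,
\[
\left(\fint_I|f|\right)^p\le \fint_I|f|^pw\left(\fint_I w^{-\frac{p'}{p}}\right)^{\frac{p}{p'}}.
\]
Multiplying by $w(I)=|I|\fint_I w$ and applying the definition \eqref{eq-w-locAp}, we obtain
\[
w(I)\left(\fint_I|f|\right)^p\le [w]_{A_p^{\operatorname{loc}}(r)}\int_I|f|^pw .
\]
Hence, if $\fint_I|f|>\lambda$, then $w(I)\le [w]_{A_p^{\operatorname{loc}}(r)}\lambda^{-p}\int_I|f|^pw$.

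\textbf{Step 2 (reduction to a compact set).} Let $E_\lambda:=\{M_rf>\lambda\}$. This set is open, since it is a union of open intervals $I$ with $\ell(I)\le r$ and $\fint_I|f|>\lambda$; closed intervals can be enlarged slightly at the cost of an arbitrarily small loss in $\lambda$. By inner regularity of the measure $w\,dx$, it suffices to bound $w(K)$ for every compact $K\subset E_\lambda$. Each such $K$ is covered by finitely many such intervals.

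\textbf{Step 3 (one-dimensional covering).} This is the step where the factor $2$ must come from, and I expect it to be the main point. On $\mathbb{R}$ there is an elementary lemma: from any finite family of intervals one can extract a subfamily with the same union and overlap at most $2$. The procedure discards every interval contained in the union of the others and then orders the remaining ones by left endpoint. Each retained interval can meet only its two neighbours, so the odd-indexed intervals are pairwise disjoint, and likewise the even-indexed ones. Note that no interval is enlarged in this process, so every chosen interval still has length at most $r$.

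\textbf{Step 4 (summation).} Summing Step 1 over the subfamily from Step 3 and using that the overlap is at most $2$, we obtain
\[
w(K)\le\sum_j w(I_j)\le \frac{[w]_{A_p^{\operatorname{loc}}(r)}}{\lambda^p}\sum_j\int_{I_j}|f|^pw\le 2\frac{[w]_{A_p^{\operatorname{loc}}(r)}}{\lambda^p}\int_{\mathbb{R}}|f|^pw .
\]
Letting $K\uparrow E_\lambda$ gives the claimed weak-type estimate.

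The only delicate point is the covering. A Vitali-type argument would enlarge the intervals, beyond the allowed scale $r$, and would also worsen the constant. The total ordering of $\mathbb{R}$ is what permits a covering with bounded overlap and no enlargement, which is consistent with Remark \ref{rmk-max}(ii).
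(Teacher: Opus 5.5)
Your proof is correct and follows essentially the paper's route: the single-interval H\"older/$A_p^{\operatorname{loc}}(r)$ estimate, a Heine--Borel finite subcover of a compact subset of the open level set, extraction via the order of $\mathbb{R}$ (with no interval enlarged) of a subcover with overlap at most $2$, and inner approximation. The differences are cosmetic. The paper works on each connected component $(u_j,v_j)$ of $E_\lambda$ with compact intervals $[A,B]$ and picks the subcover greedily by maximal right endpoint, while you prune an arbitrary finite cover to a minimal one; that pruning must remove one interval at a time, or duplicate intervals could both be discarded. Also, in Step~2, a closed interval of length exactly $r$ having $x$ as an endpoint should be slightly translated, not enlarged, so that its length stays at most $r$.
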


\begin{proof}
For any $\lambda\in(0,\infty)$, let
$$E_\lambda:=\left\{x \in \mathbb{R} :
M_{r} f(x) > \lambda\right\}.$$
Since $E_\lambda$ is an open set in $\mathbb{R}$,
$E_\lambda$ can be decomposed into an
at most countable union of mutually
disjoint open intervals as
$$E_\lambda = \bigcup_{j\in\mathbb{N}}(u_j, v_j),$$
where $u_j, v_j\in\mathbb{R}\cup\{-\infty, \infty\}$.
For any fixed component $(u_j, v_j)$, let $K := [A, B]$ be a
bounded and closed interval such that $K \subset (u_j, v_j)$.
For any $x \in K$, by the definition of $M_{r}$,
there exists an open interval $I_x$ with $\ell(I_x)\leq r$
such that $x\in I_x$ and
\begin{align}\label{eq-fint}
\frac{1}{|I_x|} \int_{I_x} |f(y)|\,dy > \lambda.
\end{align}
Note that, for any $y\in I_x$, $M_{r}f(y)
>\lambda$. Thus, $I_x\subset E_\lambda$. Since $I_x$ is a connected
interval containing $x \in (u_j, v_j)$ and $\{(u_j,v_j)\}_{j\in\mathbb{N}}$
are mutually disjoint,
\begin{align}\label{eq-interval}
I_x\subset (u_j, v_j)
\end{align}
holds. The collection $\{I_x\}_{x \in K}$ forms an
open cover of the compact set $K$. Applying the Heine--Borel theorem,
we obtain a finite subcollection $\mathcal{F}$ of
the collection $\{I_x\}_{x \in K}$ such that
$\mathcal{F}$ is also an open cover of $K$.
Next, we claim that, via refining the subset $\mathcal{F}$,
we can find a finite subcollection $\mathcal{G} \subset \mathcal{F}$
such that:
\begin{enumerate}
\item[{\rm (i)}] $K \subset \bigcup_{I\in \mathcal{G}} I$.
\item[{\rm (ii)}] For any $y\in\mathbb{R}$, there are at most
two intervals in $\mathcal{G}$ containing $y$,
i.e., $\sum_{I\in \mathcal{G}} \mathbf{1}_{I}(y)\le 2$.
\end{enumerate}

We now use the following greedy algorithm to construct $\mathcal{G}$.
First, select $I_1 = (a_1, b_1) \in \mathcal{F}$ that contains $A$
and has the maximal right endpoint $b_1$. If $b_1 > B$,
then $\mathcal{G}:=\{I_1\}$ is sufficient.
If $b_1 \le B$, select $I_2 = (a_2, b_2) \in \mathcal{F}$
that contains $b_1$ and has the maximal right endpoint $b_2$.
If $b_2 > B$, then $\mathcal{G}:=\{I_1, I_2\}$ is sufficient.
If $b_2 \le B$ also holds, select $I_3 = (a_3, b_3) \in \mathcal{F}$
that contains $b_2$ and has the maximal right endpoint $b_3$.
Proceeding with this algorithm, since $\mathcal{F}$ is
a finite collection and $b_k < b_{k+1}$,
the process must terminate in a finite number of steps $N$, yielding
$\mathcal{G} = \{I_1, I_2, \dots, I_N\}$. Moreover, $b_N > B$
ensures (i) holds;
otherwise $b_N\leq B$ contradicts the termination of this process.

To prove the bounded overlap property (ii), it suffices to show that
$I_k \cap I_{k+2} = \emptyset$ for all $k \in \{1, \dots, N-2\}$.
Suppose for contradiction that $I_k \cap I_{k+2} \neq \emptyset$.
Then $b_k \in I_{k+2}$. By our algorithm, we necessarily
have $b_{k+2} > b_{k+1}$. However, this contradicts
the choice of $I_{k+1}$. In our algorithm, $I_{k+1}$ was selected
specifically to maximize the right endpoint among all intervals
in $\mathcal{F}$ covering $b_k$. Since
$b_{k+2} > b_{k+1}$, $I_{k+2} \in \mathcal{F}$ covers $b_k$ and
provides a strictly larger right endpoint,
$I_{k+2}$ would have been chosen
instead of $I_{k+1}$, which is a contradiction.
Thus, the property (ii) holds.

For any $I \in \mathcal{G}$, applying \eqref{eq-fint},
H\"{o}lder's inequality, and the $A_p^{\operatorname{loc}}(r)$ condition,
we obtain
\begin{align*}
\lambda^p &< \left[\fint_{I} |f(y)|\,dy\right]^p
\leq \fint_{I} |f(y)|^p w(y)\,dy \left[\fint_{I}
w^{-\frac{p^{\prime}}{p}}(y)\,dy \right]^{\frac{p}{p^{\prime}}}\\
&\leq \frac{1}{w(I)}[w]_{A_p^{\operatorname{loc}}(r)}
\int_{I} |f(y)|^p w(y)\,dy \, .
\end{align*}
Summing over all $I \in \mathcal{G}$ and using
the aforementioned property (ii), we conclude that
$$w(K) \le \sum_{I \in \mathcal{G}} w(I) \le
\frac{[w]_{A_p^{\operatorname{loc}}(r)}}{\lambda^p}
\sum_{I \in \mathcal{G}}\int_{I} |f(y)|^p w(y)\,dy
\le 2\frac{[w]_{A_p^{\operatorname{loc}}(r)}}{\lambda^p}
\int_{(u_j,v_j)} |f(y)|^p w(y)\,dy,$$
where, in the last step, we used \eqref{eq-interval} to obtain the fact
$\bigcup_{I\in\mathcal{G}}I \subset (u_j,v_j)$.
Since this inequality holds for any bounded and closed
interval $K:=[A,B]\subset (u_j, v_j)$, we infer that
$$w((u_j, v_j)) = \lim_{A\to u_j, B\to v_j}w(K)\leq
2\frac{[w]_{A_p^{\operatorname{loc}}(r)}}{\lambda^p}
\int_{(u_j, v_j)} |f(y)|^p w(y)\,dy$$
and hence
\begin{align*}
w(E_\lambda) &= \sum_{j\in\mathbb{N}} w((u_j, v_j))
\leq2\sum_{j\in\mathbb{N}}\frac{ [w]_{A_p^{\operatorname{loc}}(r)}}{\lambda^p}
\int_{(u_j, v_j)} |f(y)|^p w(y) \, dy \\
&=2\frac{[w]_{A_p^{\operatorname{loc}}(r)}}{\lambda^p}
\int_{E_\lambda} |f(y)|^p w(y) \, dy
\leq 2\frac{[w]_{A_p^{\operatorname{loc}}(r)}}{\lambda^p}
\int_{\mathbb{R}} |f(y)|^p w(y) \, dy.
\end{align*}
This completes the proof of Theorem \ref{thm:weak}.
\end{proof}

Next, we recall the following quantitative reverse H\"{o}lder inequality for
the class $A_{p}$ on $\mathbb{R}^n$ from \cite[Theorem 2.3]{hp13}.
\begin{lemma}\label{lem:rhi}
Let $p\in(1,\infty)$ and $w \in A_p$. Then there exists a positive
constant $C_n$, depending on $n$, such that, for any cube
$Q\subset\mathbb{R}^n$,
\begin{align*}
\left[\fint_Q w^{1+\delta}(y)\,dy\right]^{\frac{1}{1+\delta}}
\leq 2\fint_Q w(y)\,dy,
\end{align*}
where $\delta:=\frac{C_n}{[w]_{A_p}}$.
\end{lemma}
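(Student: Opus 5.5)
The plan is to obtain this as a consequence of the sharp reverse H\"older inequality in terms of the Fujii--Wilson constant
$$[w]_{A_\infty}:=\sup_Q \frac{1}{w(Q)}\int_Q M(w\mathbf 1_Q)(x)\,dx,$$
together with the elementary comparison $[w]_{A_\infty}\le c_n[w]_{A_p}$. That comparison is the first step. I would go through the exponential $A_\infty$ constant $\sup_Q \fint_Q w\, \exp(\fint_Q\log w^{-1})$, which is dominated by $[w]_{A_p}$ by Jensen's inequality. The Fujii--Wilson constant is in turn bounded by a dimensional multiple of this exponential constant, via the $L\log L$-type estimate for $M$ applied to $w\mathbf 1_Q$ normalized by its geometric mean. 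If that route turns out to be too lossy, I would instead argue directly: by Hölder, $w_R\le [w]_{A_p}\,(w(E)/|E|)\ldots$ fails to be linear, so I would simply cite the known bound $[w]_{A_\infty}\lesssim [w]_{A_p}$ from the literature.

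The main step works on a fixed cube $Q$ with the dyadic maximal operator $M^d_Q$ localized to dyadic subcubes of $Q$. To keep everything finite, I first truncate $w$ to $\min\{w,N\}$ and let $N\to\infty$ at the end by monotone convergence. By the layer-cake formula,
$$\int_Q (M^d_Q w)^{\delta}\,w \;=\; \int_0^\infty \delta\lambda^{\delta-1}\,w(\{x\in Q:\ M^d_Q w>\lambda\})\,d\lambda .$$
I split the $\lambda$-integral at $\lambda=w_Q:=\fint_Q w$.

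\emph{Small levels, $\lambda\le w_Q$.} Here I bound $w(\{\cdots\})$ by $w(Q)$, which contributes at most $w_Q^{\delta}\,w(Q)$.

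\emph{Large levels, $\lambda>w_Q$.} Here the set $\{M^d_Qw>\lambda\}$ is the disjoint union of maximal dyadic Calder\'on--Zygmund cubes $P$ with $\lambda<w_P\le 2^n\lambda$. Hence
$$w(\{M^d_Qw>\lambda\})\le 2^n\lambda\,|\{M^d_Qw>\lambda\}|,$$
and integrating gives
$$2^n\frac{\delta}{1+\delta}\int_Q (M^d_Qw)^{1+\delta}.$$

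Since $w\le M^d_Q w$ a.e., the left-hand side of the layer-cake identity dominates $\int_Q w^{1+\delta}$. To close the argument, I run the same computation with $\int_Q (M^d_Qw)^{1+\delta}=\int_Q (M^d_Qw)^{\delta} M^d_Qw$. For the small levels I then use $\int_Q M^d_Q w\le [w]_{A_\infty}w(Q)$ in place of $w(Q)$, which yields
$$\fint_Q (M^d_Qw)^{1+\delta}\le [w]_{A_\infty}w_Q^{1+\delta}+2^n\delta\fint_Q(M^d_Qw)^{1+\delta}.$$
Choosing $\delta\le 2^{-n-1}$ absorbs the last term. Feeding this bound back into the $w$-weighted version, with $\delta$ further multiplied by a factor $1/[w]_{A_\infty}$, makes the error term at most $2^n\delta\,(2[w]_{A_\infty})\,w_Q^{1+\delta}\le w_Q^{1+\delta}$. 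This gives $\fint_Q w^{1+\delta}\le 2\,w_Q^{1+\delta}$ with $\delta=1/(2^{n+2}[w]_{A_\infty})$, and taking the $\frac1{1+\delta}$-th root produces the factor $2^{1/(1+\delta)}\le 2$.

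Finally, combining this with the first step gives $\delta=C_n/[w]_{A_p}$, as required. I expect the main obstacle to be this bookkeeping of constants: getting exactly the factor $2$ rather than merely some $C$, which requires the two-stage absorption described above with $\delta\sim (2^{n}[w]_{A_\infty})^{-1}$. A secondary point to check is that the Calder\'on--Zygmund stopping argument is legitimate for an arbitrary (non-dyadic) cube $Q$; this holds because the dyadic grid can always be taken relative to $Q$ itself.
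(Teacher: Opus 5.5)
The overall architecture is right. It is essentially the Hyt\"onen--P\'erez argument, which the paper does not reproduce but simply cites: their sharp reverse H\"older inequality in terms of $[w]_{A_\infty}$, combined with $[w]_{A_\infty}\lesssim[w]_{A_p}$. Your second layer-cake computation, however, has a genuine gap. ``Running the same computation'' with the measure $M^d_Qw\,dx$ in place of $w\,dx$ requires, at the levels $\lambda>w_Q$, the bound
\[
\int_{\{M^d_Qw>\lambda\}}M^d_Qw\,dx\le 2^n\lambda\,\bigl|\{M^d_Qw>\lambda\}\bigr|.
\]
This bound is false. The stopping condition only gives $w_P\le 2^n\lambda$ for the averages of $w$ over the maximal cubes $P$. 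It says nothing about the size of $M^d_Qw$ inside $P$, which can be much larger than $2^n\lambda$. In fact your displayed inequality, with error term $2^n\delta\fint_Q(M^d_Qw)^{1+\delta}$ and no factor $[w]_{A_\infty}$, cannot hold. Absorbing at $\delta=2^{-n-1}$ would give $\fint_Qw^{1+2^{-n-1}}\le 2[w]_{A_\infty}w_Q^{1+2^{-n-1}}$ for every $A_\infty$ weight, i.e.\ a reverse H\"older exponent independent of the weight. This fails for $w(x)=|x|^{-a}\in A_1$ with $n/(1+2^{-n-1})\le a<n$ on cubes containing the origin. Since $[\min\{w,N\}]_{A_1}\le[w]_{A_1}$, your truncation does not rescue it.

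The missing idea is localization on the Calder\'on--Zygmund cubes. Let $\lambda>w_Q$ and let $P$ be a maximal cube of $\{M^d_Qw>\lambda\}$. Every dyadic ancestor of $P$ inside $Q$ has average at most $\lambda<w_P$, so $M^d_Qw=M^d_Pw$ on $P$. The Fujii--Wilson condition on each $P$ then gives
\[
\int_{\{M^d_Qw>\lambda\}}M^d_Qw\,dx=\sum_P\int_PM^d_Pw\,dx\le[w]_{A_\infty}\sum_Pw(P)\le 2^n[w]_{A_\infty}\lambda\,\bigl|\{M^d_Qw>\lambda\}\bigr|.
\]
So the correct inequality is $\fint_Q(M^d_Qw)^{1+\delta}\le[w]_{A_\infty}w_Q^{1+\delta}+\frac{2^n\delta[w]_{A_\infty}}{1+\delta}\fint_Q(M^d_Qw)^{1+\delta}$. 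The restriction $\delta\lesssim(2^n[w]_{A_\infty})^{-1}$ is therefore already forced at the first absorption, not only when you feed back. With your final choice $\delta=(2^{n+2}[w]_{A_\infty})^{-1}$, this yields $\fint_Q(M^d_Qw)^{1+\delta}\le\frac43[w]_{A_\infty}w_Q^{1+\delta}$. Feeding this into the $w$-weighted estimate gives $\fint_Qw^{1+\delta}\le\frac43w_Q^{1+\delta}$, so your conclusion survives once this step is supplied.

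A smaller point: truncating the weight forces you to check that $\min\{w,N\}$ has $A_\infty$ constant controlled by that of $w$. It is cleaner to truncate the operator instead, taking the supremum only over dyadic subcubes of $Q$ of generation at most $k$; call this $M^d_{Q,k}$. Then the localization still gives $M^d_{Q,k}w\le M^d_Pw$ on $P$, and the $(1+\delta)$-integral is trivially finite. Finally, $\int_Qw^{1+\delta}\le\lim_{k\to\infty}\int_Q(M^d_{Q,k}w)^\delta w$ by monotone convergence.
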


As a corollary of Lemma \ref{lem:rhi}
and Theorem \ref{thm-extension}, we can obtain the following
quantitative open property of the local weight class
$A^{\operatorname{loc}}_p(r)$ on $\mathbb{R}^n$.

\begin{corollary}\label{cor-Aq}
Let $r\in(0,\infty)$, $p\in(1,\infty)$, and $w \in A^{\operatorname{loc}}_p(r)$.
Then there exists an exponent $q \in (1, p)$ such
that $w \in A^{\operatorname{loc}}_q(r)$
has the following two properties:
\begin{itemize}
\item[{\rm (i)}] $[w]_{A_q^{\operatorname{loc}}(r)}
\le 2^{p-1} [w]_{A_p^{\operatorname{loc}}(r)}$.
\item[{\rm (ii)}] $p-q\sim\frac{1}{[w]^{\frac{1}{p-1}}_{A_{p}^{\operatorname{loc}}(r)}}$,
where the equivalence constants depend on $p$ and $n$.
\end{itemize}
\end{corollary}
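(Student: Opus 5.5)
The plan is to transfer the global quantitative openness of $A_p$ to the local class, using the extension property (Theorem \ref{thm-extension}) together with the reverse H\"older inequality (Lemma \ref{lem:rhi}). We apply the reverse H\"older inequality to the dual weight $\sigma:=w^{-\frac{p'}{p}}=w^{1-p'}$ and then convert the gained integrability of $\sigma$ into a smaller Muckenhoupt index. Concretely, we define $q$ through $(q'-1)=(p'-1)(1+\delta)$, that is, $\frac{1}{q-1}=\frac{1+\delta}{p-1}$. Then $q<p$ and $p-q=(p-1)\frac{\delta}{1+\delta}\sim\delta$, and $\delta$ will be chosen comparable to $[w]^{-\frac{1}{p-1}}_{A_p^{\operatorname{loc}}(r)}$.

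First we set up the dual weight. By Corollary \ref{cor-Ap-Ap'} with $m=1$ and $q=p$, we have $\sigma\in A^{\operatorname{loc}}_{p'}(r)$ and $[\sigma]_{A^{\operatorname{loc}}_{p'}(r)}=[w]^{\frac{1}{p-1}}_{A_p^{\operatorname{loc}}(r)}$; in the scalar case this is an exact identity. Fix a cube $Q$ with $\ell(Q)\le r$ and enlarge it to a cube $Q_0\supset Q$ with $\ell(Q_0)=r$. Theorem \ref{thm-extension} applied to $\sigma$ and $Q_0$ gives a global weight $V\in A_{p'}$ with $V=\sigma$ on $Q_0$ and $[V]_{A_{p'}}\le 3^{np}[\sigma]_{A^{\operatorname{loc}}_{p'}(r)}$. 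Note that $\gamma=1+\frac{p'}{p}\cdot\frac{p}{p'}$-type bookkeeping gives a constant depending only on $n,p$.

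Next we apply Lemma \ref{lem:rhi} to $V$ on $Q$. This gives
\[
\left(\fint_Q \sigma^{1+\delta}\right)^{\frac{1}{1+\delta}}\le 2\fint_Q\sigma
\quad\text{with}\quad
\delta=\frac{C_n}{[V]_{A_{p'}}}\ge \frac{C_n 3^{-np}}{[w]^{\frac{1}{p-1}}_{A_p^{\operatorname{loc}}(r)}}.
\]
We fix $\delta:=C_n3^{-np}[w]^{-\frac{1}{p-1}}_{A_p^{\operatorname{loc}}(r)}$, which is independent of $Q$. The inequality remains valid for this smaller $\delta$ by H\"older's inequality, since the reverse H\"older bound is monotone in the exponent.

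With this $\delta$ we verify (i). Because $\sigma^{1+\delta}=w^{-\frac{q'}{q}}$, we have
\[
\fint_Q w\left(\fint_Q w^{-\frac{q'}{q}}\right)^{q-1}
=\fint_Q w\left[\left(\fint_Q\sigma^{1+\delta}\right)^{\frac{1}{1+\delta}}\right]^{(1+\delta)(q-1)}
\le 2^{p-1}\fint_Q w\left(\fint_Q\sigma\right)^{p-1},
\]
where we used $(1+\delta)(q-1)=p-1$. Taking the supremum over $Q$ gives (i). For (ii), the relation $p-q\sim\delta\sim[w]^{-\frac{1}{p-1}}_{A_p^{\operatorname{loc}}(r)}$ follows because $\delta\le C_n$ is bounded, so $1+\delta\sim 1$.

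The main obstacle is making sure that $\delta$ depends only on the local constant and not on $Q$. This is exactly what the uniform extension bound in Theorem \ref{thm-extension} provides, once we note that every cube of side at most $r$ sits inside some cube of side exactly $r$ on which $V$ coincides with $\sigma$.
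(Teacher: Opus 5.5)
Your proposal is correct and follows the paper's argument: you apply the reverse H\"older inequality to the dual weight $\sigma=w^{-\frac{1}{p-1}}$, using $[\sigma]_{A^{\operatorname{loc}}_{p'}(r)}=[w]^{\frac{1}{p-1}}_{A^{\operatorname{loc}}_p(r)}$, Theorem~\ref{thm-extension} and Lemma~\ref{lem:rhi}, then define $q$ by $\frac{p-1}{q-1}=1+\delta$; enlarging $Q$ to a cube of side exactly $r$ is a welcome detail that the paper glosses over. One bookkeeping fix: for the index $p'$ the extension constant is $3^{n\gamma}$ with $\gamma=1+\frac{p'}{p}=p'$, so take $\delta:=C_n3^{-np'}[w]^{-\frac{1}{p-1}}_{A^{\operatorname{loc}}_p(r)}$ as the paper does, because with $3^{-np}$ and $p<2$ your $\delta$ need not be $\le C_n/[V]_{A_{p'}}$, and then the monotonicity step is not justified.
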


\begin{proof}
For any cube $Q\subset\mathbb{R}^n$ with $\ell(Q)\leq r$,
by Theorem \ref{thm-extension}, there exists $v_Q\in A_p$ such that
$v_Q=w$ on $Q$ and
\begin{align*}
[v_Q]_{A_p}\leq 3^{np}[w]_{A^{\operatorname{loc}}_p(r)}.
\end{align*}
Let $\delta:=\frac{C_n}{[v_Q]_{A_p}}$ and
$\widetilde{\delta}:=\frac{C_n}{3^{np}[w]_{A^{\operatorname{loc}}_p(r)}}$,
where $C_n$ is the same constant as in Lemma \ref{lem:rhi}.
Note that $\widetilde{\delta} \le \delta$.
Using this extension property, H\"{o}lder's inequality,
and Lemma \ref{lem:rhi},
we conclude that
\begin{align*}
\left[\fint_Q w^{1+\widetilde{\delta}}(y)\,dy
\right]^{\frac{1}{1+\widetilde{\delta}}}
&=\left[\fint_Q v_Q^{1+\widetilde{\delta}}(y)\,dy
\right]^{\frac{1}{1+\widetilde{\delta}}}
\leq\left[\fint_Q v_Q^{1+\delta}(y)\,dy
\right]^{\frac{1}{1+\delta}}\\
&\leq 2\fint_Q v_Q(y)\,dy
= 2\fint_Q w(y)\,dy.
\end{align*}
Applying a standard duality argument, we obtain
$\widetilde{w}:= w^{-\frac{1}{p-1}} \in
A_{p^{\prime}}^{\operatorname{loc}}(r)$ with
$[\widetilde{w}]_{A_{p^{\prime}}^{\operatorname{loc}}(r)}=
[w]^{\frac{1}{p-1}}_{A_{p}^{\operatorname{loc}}(r)}$.

Let $\widetilde{\delta}_1:=\frac{C_n}{3^{np^{\prime}}
[\widetilde{w}]_{A_{p^{\prime}}^{\operatorname{loc}}(r)}}$.
Next, we prove that $w\in A_{q}^{\operatorname{loc}}(r)$,
where $q\in(1,p)$ satisfies $\frac{p-1}{q-1}=1+\widetilde{\delta}_1$.
Indeed, for any cube $Q\subset\mathbb{R}^n$ with $\ell(Q)\leq r$,
we have
\begin{align*}
&\fint_Q w(x)\,dx\left[\fint_Q w^{-\frac{1}{q-1}}(x)
\,dx\right]^{q-1}\\
&\quad=\fint_Q w(x)\,dx\left[\fint_Q w^{-\frac{1}{p-1}(1+\widetilde{\delta}_1)}(x)
\,dx\right]^{\frac{p-1}{1+\widetilde{\delta}_1}}\\
&\quad=\fint_Q w(x)\,dx\left( \left[\fint_Q \widetilde{w}^{1+\widetilde{\delta}_1}(x)
\,dx\right]^{\frac{1}{1+\widetilde{\delta}_1}} \right)^{p-1}
\leq 2^{p-1}\fint_Q w(x)\,dx\left[\fint_Q \widetilde{w}(x)
\,dx\right]^{p-1}\\
&\quad= 2^{p-1}\fint_Q w(x)\,dx\left[\fint_Q w(x)^{-\frac{1}{p-1}}
\,dx\right]^{p-1}
\leq 2^{p-1}[w]_{A^{\operatorname{loc}}_p(r)}.
\end{align*}
Taking the supremum over all such cubes $Q$,
we obtain $w\in A^{\operatorname{loc}}_q(r)$ and
$[w]_{A_q^{\operatorname{loc}}(r)} \le 2^{p-1}
[w]_{A_p^{\operatorname{loc}}(r)}$.
Note that
\begin{align*}
p-q&=(p-1)-(q-1)=(p-1)-(p-1)\left(\frac{1}{1+\widetilde{\delta}_1}\right)\\
&=(p-1)\left(\frac{\widetilde{\delta}_1}{1+\widetilde{\delta}_1}\right)
\sim\frac{1}{[\widetilde{w}]_{A_{p^{\prime}}^{\operatorname{loc}}(r)}}=
\frac{1}{[w]^{\frac{1}{p-1}}_{A_{p}^{\operatorname{loc}}(r)}},
\end{align*}
where the equivalence constants depend on $p$ and $n$.
This completes the proof of Corollary \ref{cor-Aq}.
\end{proof}

We now begin to prove Theorem \ref{thm:strong}.

\begin{proof}[Proof of Theorem \ref{thm:strong}]
It is a standard fact that
$\left\|M_{r} f\right\|_{L^\infty(w)} \leq\|f\|_{L^\infty(w)}$
for any $f\in L^\infty(w)$.
By Corollary \ref{cor-Aq} and Theorem \ref{thm:weak},
there exists an exponent $q\in(1,p)$ as in Corollary \ref{cor-Aq}
such that,
for any $f \in L^q(w)$ and $\lambda\in(0,\infty)$,
\begin{align*}
w\left(\left\{x \in \mathbb{R} : M_{r}
f(x) > \lambda\right\}\right) \le 2
\frac{[w]_{A_q^{\operatorname{loc}}(r)}}{\lambda^q}
\int_{\mathbb{R}} |f(y)|^q w(y) \,dy,
\end{align*}
which further implies that
$$\left\|M_{r} f\right\|_{L^{q,\infty}(w)}
\leq \left(2[w]_{A_q^{\operatorname{loc}}(r)}\right)^{\frac{1}{q}}
\left\|f\right\|_{L^q(w)}.$$
Applying the Marcinkiewicz interpolation theorem
(see, for instance, \cite[Theorem 1.3.2]{g14c}) and the quantitative
properties from Corollary \ref{cor-Aq}, we obtain,
for any $f\in L^p(w)$,
\begin{align*}
\left\|M_{r} f\right\|_{L^p(w)}
&\leq 2\left(\frac{p}{p-q}\right)^{\frac{1}{p}}
\left(2[w]_{A_q^{\operatorname{loc}}(r)}\right)^{\frac{1}{p}} \|f\|_{L^p(w)}\\
&\leq 4\left(\frac{p}{p-q}\right)^{\frac{1}{p}}
[w]^{\frac{1}{p}}_{A_p^{\operatorname{loc}}(r)} \|f\|_{L^p(w)}\\
&\lesssim [w]^{\frac{1}{p(p-1)}+\frac{1}{p}}_{A_p^{\operatorname{loc}}(r)}
\|f\|_{L^p(w)}=[w]^{\frac{1}{p-1}}_{A_p^{\operatorname{loc}}(r)}\|f\|_{L^p(w)}.
\end{align*}
This completes the proof of Theorem \ref{thm:strong}.
\end{proof}

\textbf{Acknowledgements.} \quad The authors would
like to thank both referees for their careful reading and many valuable
comments which improved the presentation of this article.

This project is partially supported by the National Natural
Science Foundation of China (Grant
Nos. 12371093 and 12431006), the Beijing Natural
Science Foundation (Grant No. 1262011),
the Fundamental Research Funds for the Central Universities
(Grant No. 2253200028),
and the Research Council of Finland
(Grant Nos. 364208 and 371637).

\bigskip

\noindent Tuomas Hyt\"onen

\medskip

\noindent Department of Mathematics and Systems Analysis, Aalto University,
P.O. Box 11100 (Otakaari~1), FI-00076 Aalto, Finland

\smallskip

\noindent{\it E-mail:} \texttt{tuomas.hytonen@aalto.fi}

\bigskip

\noindent Dachun Yang (Corresponding author),
Wen Yuan and Mingdong Zhang.

\medskip

\noindent Laboratory of Mathematics and Complex Systems
(Ministry of Education of China),
School of Mathematical Sciences,
Institute for Advanced Study, Beijing Normal University,
Beijing 100875, The People's Republic of China

\smallskip

\noindent{{\it E-mails:}}
\texttt{dcyang@bnu.edu.cn} (D. Yang)

\noindent\phantom{{\it E-mails:}}
\texttt{wenyuan@bnu.edu.cn} (W. Yuan)

\noindent\phantom{\it E-mails:}
\texttt{mdzhang@mail.bnu.edu.cn} (M. Zhang)
\end{document}